\newtheorem{theorem}{Theorem}[section]
\newtheorem{lemma}[theorem]{Lemma}
\newtheorem{proposition}[theorem]{Proposition}
\newtheorem{corollary}[theorem]{Corollary}
\theoremstyle{definition}
\newtheorem{definition}[theorem]{Definition}
\theoremstyle{remark}
\newtheorem{remark}[theorem]{Remark}
\numberwithin{equation}{section}
\newcommand{\newword}[1]{\textbf{\emph{#1}}}
\newcommand{\st}[1]{{#1}^{\prime}}
\newcommand{\row}{\mathrm{row}}
\newcommand{\hook}{\mathrm{hook}}
\newcommand{\Des}{\mathrm{Des}}
\newcommand{\SYT}{\mathrm{SYT}}
\newcommand{\SST}{\mathrm{SST}}
\newcommand{\ipo}{i\!+\!1}
\newcommand{\imo}{i\!-\!1}
\newlength\cellsize \setlength\cellsize{10\unitlength}
\newcommand\cellify[1]{\def\thearg{#1}\def\nothing{}%
\ifx\thearg\nothing\vrule width0pt height\cellsize depth0pt%
  \else\hbox to 0pt{\usebox2\hss}\fi%
  \vbox to 10\unitlength{\vss\hbox to 10\unitlength{\hss$#1$\hss}\vss}}
\newcommand\tableau[1]{\vtop{\let\\=\cr
\setlength\baselineskip{-10000pt}
\setlength\lineskiplimit{10000pt}
\setlength\lineskip{0pt}
\halign{&\cellify{##}\cr#1\crcr}}}
\definecolor{boxgray}{gray}{.85}
\newcommand{\cb}{\color{boxgray}\rule{1\cellsize}{1\cellsize}\hspace{-\cellsize}\usebox2}
\newcommand\grayify[1]{\def\thearg{#1}\def\nothing{}%
\ifx\thearg\nothing\vrule width0pt height\cellsize depth0pt%
  \else\hbox to 0pt{\usebox4\hss}\fi%
  \vbox to 10\unitlength{\vss\hbox to 10\unitlength{\hss$#1$\hss}\vss}}
\newlength\widecellsize \setlength\widecellsize{16\unitlength}
\newcommand\widecellify[1]{\def\thearg{#1}\def\nothing{}%
\ifx\thearg\nothing\vrule width0pt height\widecellsize depth0pt%
  \else\hbox to 0pt{\usebox3\hss}\fi%
  \vbox to \widecellsize{\vss\hbox to \widecellsize{\hss$_{#1}$\hss}\vss}}
\newcommand\widetableau[1]{\vtop{\let\\=\cr
\setlength\baselineskip{-16000pt}
\setlength\lineskiplimit{16000pt}
\setlength\lineskip{0pt}
\halign{&\widecellify{##}\cr#1\crcr}}}
\begin{document}


\title{Queer dual equivalence graphs}  

\author{Sami H. Assaf}
\address{Department of Mathematics, University of Southern California, 3620 S. Vermont Ave., Los Angeles, CA 90089-2532, U.S.A.}
\email{shassaf@usc.edu}
\thanks{Work supported in part by NSF grant DMS-1763336.}

\subjclass[2010]{%
Primary   05E05; 
Secondary 05E10, 
          05A05.  
}



\keywords{shifted tableaux, Schur $P$-functions, Schur $Q$-functions, dual equivalence graphs}

\begin{abstract}
  We introduce a new paradigm for proving the Schur $P$-positivity. Generalizing dual equivalence, we give an axiomatic definition for a family of involutions on a set of objects to be a queer dual equivalence, and we prove whenever such a family exists, the fundamental quasisymmetric generating function is Schur $P$-positive. In contrast with shifted dual equivalence, the queer dual equivalence involutions restrict to a dual equivalence when the queer involution is omitted. We highlight the difference between these two generalization with a new application to the product of Schur $P$-functions.
\end{abstract}

\maketitle
\tableofcontents

%
\section{Introduction}
%
\label{sec:introduction}

Schur $P$-polynomials arise as characters of tensor representations of the queer Lie superalgebra \cite{Ser84}, characters of projective representations of the symmetric group \cite{Ste89}, and representatives for cohomology classes dual to Schubert cycles in isotropic Grassmannians \cite{Pra91}. These polynomials enjoy many nice properties parallel to Schur polynomials; in particular, Schur $P$- and $Q$-polynomials form dual bases for an important subspace of symmetric functions, are Schur positive \cite{Sag87}, and the former have positive structure constants \cite{Ste89}.

Many polynomials that arise in representation theoretic or geometric contexts can be expressed as a non-negative sum of Schur $P$-polynomials including type B Stanley symmetric functions, a generalization of Stanley symmetric functions \cite{Sta84} introduced by Billey and Haiman \cite{BH95}, and involution Stanley symmetric functions introduced by Hamaker, Marberg, and Pawlowski \cite{HMP17}. Thus, as with the quintessential problem of Schur positivity, establishing Schur $P$-positivity of a given function is a recurring problem with broad application.

Assaf \cite{Ass18} and Billey, Hamaker, Roberts, and Young \cite{BHRY14} simultaneously defined a shifted generalization of dual equivalence \cite{Ass15}, a machinery developed by Assaf to prove the Schur positivity of a function expressed in terms of fundamental quasisymmetric functions. Briefly, a \emph{shifted dual equivalence} is a family of involutions on a set of objects endowed with a \emph{peak} set satisfying certain local conditions that ensure the generating polynomial of the objects is Schur $P$-positive. Given the close relationship between Schur $P$- and $Q$-functions, this also gives a means to prove Schur $Q$-positivity.

In \cite{Ass18}, the author uses dual equivalence to establish the Schur positivity of Schur $P$-functions and shifted dual equivalence to establish the Schur $Q$-positivity of a skew Schur $Q$-function. Using duality with Schur $P$-functions, this establishes the positivity of structure constants for Schur $P$-functions. In this paper, we define a new generalization of dual equivalence which we term \emph{queer dual equivalence}, which we argue is better suited to Schur $P$-functions, whereas shifted dual equivalence as defined in \cite{Ass18,BHRY14} is better suited to Schur $Q$-functions.

Returning to the original notion of dual equivalence on objects endowed with \emph{descent} sets, instead of altering these indexing functions, we enrich the existing dual equivalence involutions with one additional involution, the \emph{queer involution}, so that the accompanying conditions on the queer involution ensure that the generating polynomial of the objects is Schur $P$-positivity. Thus we have a new paradigm for establishing Schur $P$-positivity. We apply this machinery directly to products of Schur $P$-functions to give a new proof of the positivity of structure constants for Schur $P$-functions.

One of the main features of dual equivalence and its shifted variation is the existence of \emph{local} conditions on the involutions. The main impediment to wider applications of queer dual equivalence is the needed conditions are global. However, taking inspiration from the parallel story of queer crystals \cite{GJKK10,GJKKK10} for the quantized universal enveloping algebras for the queer superalgebra \cite{Ser84}, one can expect that an alternative characterization with only local conditions should exist. Indeed, using the explicit connection between crystals and dual equivalences in type A \cite{Ass08}, such a local characterization might also yield a resolution to local characterization sought by Assaf and Oguz \cite{AO20} for the queer crystal.

%
\section{Symmetric Functions}
%
\label{sec:deg}

A \newword{partition} $\lambda = (\lambda_1,\lambda_2, \ldots, \lambda_{\ell})$ is a weakly decreasing sequence of positive integers, i.e. $\lambda_1 \geq \lambda_2 \geq \cdots \geq \lambda_{\ell} > 0$. The \newword{size} of a partition is the sum of its parts, i.e. $\lambda_1 + \lambda_2 + \cdots + \lambda_{\ell}$. We identify a partition $\lambda$ with its Young diagram, the collection of left-justified cells with $\lambda_i$ cells in row $i$, indexed from the bottom (French notation). 

A \newword{permutation} of $[n]$ is an ordering of the numbers $[n] = \{1,2,\ldots,n\}$. A \newword{standard Young tableau} of shape $\lambda$ is a bijective filling of the Young diagram for $\lambda$ with letters of a permutation such that entries weakly increase along rows and strictly increase up columns. The \newword{row reading word} of a standard Young tableau $T$, denoted by $\row(T)$, is the permutation obtained by reading the rows of $T$ left to right, from top to bottom. For example, see Fig.~\ref{fig:SYT}.

\begin{figure}[ht]
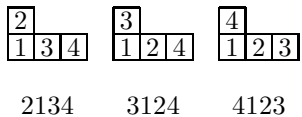

  \begin{center}
    \begin{displaymath}
      \begin{array}{ccc}
        \tableau{2 \\ 1 & 3 & 4} &
        \tableau{3 \\ 1 & 2 & 4} &
        \tableau{4 \\ 1 & 2 & 3} \\ \\
        2134 &
        3124 &
        4123
      \end{array}
    \end{displaymath}
    \caption{\label{fig:SYT}The standard Young tableaux of shape $(3,1)$ with (row) reading words indicated below.} 
  \end{center}
\end{figure}

The \newword{descent set} of a permutation $w$, denoted by $\Des(w)$, is given by
\begin{equation}
  \Des(w) \ = \ \left\{ i \ | \ \mbox{$i$ right of $i+1$} \right\}.
  \label{e:des}
\end{equation}
When $w$ is a permutation of length $n$, we have $\Des(w) \subseteq [n-1]$. When we wish to emphasize $n$, we write $\Des_n$. The \newword{descent set} of a standard Young tableau is the descent set of its row reading word. For example, the descent sets of the tableaux in Fig.~\ref{fig:SYT} from left to right are $\{1\}, \{2\}, \{3\}$. 

\begin{definition}[\cite{Ges84}]
  For $D$ a subset of $[n-1]$, the \newword{fundamental quasisymmetric function} $F_D$ is
  \begin{equation}
    F_{D}(X) \ = \ \sum_{\substack{i_1 \leq \cdots \leq i_n \\ j \in D \Rightarrow i_j < i_{j+1}}} x_{i_1} \cdots x_{i_n} .
    \label{e:quasisym}
  \end{equation}
\end{definition}

Implicit in our notation is that $D \subseteq [n-1]$. When we wish to make this explicit, we write $F_{n,D}$ or $F_{D_n}$. 

Gessel \cite{Ges84} defined the fundamental basis for quasisymmetric functions precisely to capture the following expansion for Schur functions arising from Stanley's Fundamental Theorem for $P$-partitions \cite{Sta72}.

\begin{definition}
  For $\lambda$ a partition of $n$, the \newword{Schur function} $s_{\lambda}$ is 
  \begin{equation}
    s_{\lambda}(X) \ = \ \sum_{T \in \SYT(\lambda)} F_{\Des(\row(T))}(X),
    \label{e:schur}
  \end{equation}
  where $\SYT(\lambda)$ denotes the set of all standard Young tableaux of shape $\lambda$.
\label{def:schur}
\end{definition}

For example, from Fig.~\ref{fig:SYT}, we may compute
\begin{eqnarray*}
s_{(3,1)} & = & F_{\{1\}} + F_{\{2\}} + F_{\{3\}} .
\end{eqnarray*}

A \newword{strict partition} $\gamma$ is one whose parts strictly decrease, i.e. $\gamma_1 > \gamma_2 > \cdots > \gamma_{\ell} > 0$. We identify a strict partition $\gamma$ with its shifted diagram obtained by shifting row $i$ of its Young diagram $i-1$ columns to the right.

A \newword{signed permutation} of $[n]$ is permutation of $[n]$ where each letter is allowed to be marked or unmarked. A \newword{signed standard tableau} of strict shape $\gamma$ is a bijective filling of the shifted Young diagram for $\gamma$ with letters of a signed permutation such that entries weakly increase along rows and columns with no signed entries on the main diagonal. The \newword{hook reading word} of a signed standard tableau $S$, denoted by $\hook(S)$, is the permutation obtained by reading the marked entries of $i$th column of $S$ from bottom to top and then the $i$th row of $S$ from left to right, for $i$ from largest to smallest. For examples, see Fig.~\ref{fig:SST}. 

\begin{figure}[ht]
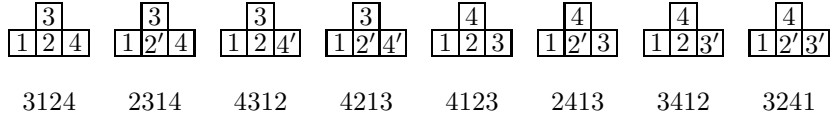

  \begin{center}
    \begin{displaymath}
      \begin{array}{cccccccc}
      \tableau{& 3 \\ 1 & 2 & 4} &
      \tableau{& 3 \\ 1 & \st{2} & 4} &
      \tableau{& 3 \\ 1 & 2 & \st{4}} &
      \tableau{& 3 \\ 1 & \st{2} & \st{4}} &
      \tableau{& 4 \\ 1 & 2 & 3} &
      \tableau{& 4 \\ 1 & \st{2} & 3} &
      \tableau{& 4 \\ 1 & 2 & \st{3}} &
      \tableau{& 4 \\ 1 & \st{2} & \st{3}} \\ \\
      3124 &
      2314 &
      4312 &
      4213 &
      4123 &
      2413 &
      3412 &
      3241
      \end{array}
    \end{displaymath}
    \caption{\label{fig:SST}The signed standard tableaux of shape $(3,1)$ with (hook) reading words indicated below.} 
  \end{center}
\end{figure}

The \newword{descent set} of a signed standard tableau is the descent set of its hook reading word. For example, the descent sets of the tableaux in Fig.~\ref{fig:SST} from left to right are $\{2\}, \{1\}, \{2,3\}, \{1,3\}, \{3\}, \{1,3\}, \{2\}, \{1,2\}$. 

We take the following result of Assaf \cite{Ass18}(Proposition 4.2) as our definition for Schur $P$-functions, parallel to Definition~\ref{def:schur} for Schur functions.

\begin{definition}
  For $\gamma$ a strict partition of $n$, the \newword{Schur $P$-function} $P_{\gamma}$ is 
  \begin{equation}
    P_{\gamma}(X) \ = \ \sum_{S \in \SST(\gamma)} F_{\Des(\hook(S))}(X),
    \label{e:schurP}
  \end{equation}
  where $\SST(\lambda)$ denotes the set of all signed standard tableaux of shape $\gamma$.
\label{def:schurP}
\end{definition}

For example, from Fig.~\ref{fig:SST}, we may compute
\begin{eqnarray*}
P_{(3,1)} & = & F_{\{1\}} + 2 F_{\{2\}} + F_{\{3\}} + F_{\{1,2\}} + 2 F_{\{1,3\}} + F_{\{2,3\}} .
\end{eqnarray*}

Since the Schur $P$-functions are symmetric, they can be expanded in the Schur basis. Stanley conjectured the expansion to be nonnegative, and this follows from Sagan's shifted insertion \cite{Sag87} independently developed by Worley \cite{Wor84}. 

\begin{theorem}[\cite{Sag87,Wor84}]
  For $\gamma$ a strict partition, we have
    \begin{equation}
      P_{\gamma} (X) \ = \ \sum_{\lambda} g_{\gamma,\lambda} s_{\lambda}(X)
      \label{e:P_schur}
    \end{equation}
    where $g_{\gamma,\lambda}$ are nonnegative integers.
\label{thm:P_pos}
\end{theorem}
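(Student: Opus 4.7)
The strategy is to realize the generating sum in (2.5) as a non-negative integer combination of Schur functions by constructing a descent-preserving partition of $\SST(\gamma)$ into blocks, each of which is in bijection with $\SYT(\lambda)$ for some partition $\lambda$. Once such a partition is in hand, substitution into (2.5) and grouping terms by shape $\lambda$ immediately produces (2.6), with $g_{\gamma,\lambda}$ equal to the number of blocks of type $\lambda$.

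To construct such a partition I would use a shifted analogue of RSK insertion (the Sagan--Worley correspondence, or equivalently Haiman's shifted mixed insertion). Given $S \in \SST(\gamma)$, compute the hook reading word $w = \hook(S)$ and apply shifted insertion to $w$ to produce a pair $(P(S),Q(S))$, where $P(S) \in \SYT(\lambda)$ is a straight-shape standard Young tableau on $[n]$ and $Q(S)$ is an auxiliary shifted recording object tied to $\gamma$. Then I would group $S$'s by the value of $Q(S)$. Each block would map bijectively, via $S \mapsto P(S)$, onto $\SYT(\lambda)$ for the appropriate $\lambda$, and the multiplicity $g_{\gamma,\lambda}$ would be the number of recording objects of shifted shape $\gamma$ leading to straight shape $\lambda$.

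The central property required for this scheme to produce (2.6) is the descent-preservation identity $\Des(\hook(S)) = \Des(P(S))$ as subsets of $[n-1]$; this is the shifted-insertion analogue of Schensted's classical descent property and is the main technical obstacle. Establishing it requires a careful case analysis of the elementary bumping steps, tracking how marked versus unmarked letters behave, how row insertion transitions to column insertion when a bumped letter meets the shifted diagonal, and how descents of the input word correspond to adjacencies of the consecutive insertion positions in $P(S)$.

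Once the descent property is in place, the proof finishes by direct substitution: writing $\SST(\gamma) = \bigsqcup_\lambda \bigsqcup_{Q} \{S : Q(S) = Q\}$ and using the bijection $S \mapsto P(S)$ on each fiber gives
\begin{equation*}
P_\gamma(X) \;=\; \sum_{S \in \SST(\gamma)} F_{\Des(\hook(S))}(X) \;=\; \sum_{\lambda} g_{\gamma,\lambda} \sum_{T \in \SYT(\lambda)} F_{\Des(\row(T))}(X) \;=\; \sum_{\lambda} g_{\gamma,\lambda}\, s_\lambda(X),
\end{equation*}
so that the coefficients $g_{\gamma,\lambda}$ are manifestly non-negative integers, as desired.
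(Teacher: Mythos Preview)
The paper does not actually give its own proof of this theorem: it is quoted from \cite{Sag87,Wor84} with the one-line remark that nonnegativity ``follows from Sagan's shifted insertion.'' The paper's own contribution to this positivity statement comes later, as Theorem~\ref{thm:strong_pos}: the involutions $d_i$ of Definition~\ref{def:deg_shifted} are shown (in \cite{Ass18}) to satisfy the abstract dual equivalence axioms of Definition~\ref{def:deg}, and then Theorem~\ref{thm:deg} yields Schur positivity class by class. So the comparison is between your insertion argument and the dual equivalence argument.

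Your outline is in the spirit of the original Sagan--Worley proof and is essentially sound, but the way you describe the insertion is slightly muddled. Sagan--Worley shifted insertion of a word produces a pair of \emph{shifted} tableaux of the same shifted shape; it does not directly hand you $P(S)\in\SYT(\lambda)$ of straight shape as you assert. To get a straight-shape insertion tableau you need either Haiman's mixed insertion (which you mention in passing) or, more simply, ordinary RSK applied to the permutation $\hook(S)$. In the latter case the descent identity you want is the standard one, $\Des(\hook(S))=\Des(Q)$ for the \emph{recording} tableau, and the real work is showing that the set $\{\hook(S):S\in\SST(\gamma)\}$ is a union of Knuth (equivalently, dual Knuth) classes so that the $P$-fibers are full $\SYT(\lambda)$'s. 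That last statement is exactly the content of Theorem~\ref{thm:strong_pos} in the dual equivalence language; the insertion framing gives you a constructive bijection but does not shortcut this combinatorial fact. What the dual equivalence route buys is locality (only checking small windows of the tableau) and portability to other objects; what the insertion route buys is an explicit formula for $g_{\gamma,\lambda}$ as a count of recording objects.
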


For example, we may also express $P_{(3,1)}$ as 
\begin{eqnarray*}
P_{(3,1)} & = & s_{(3,1)} + s_{(2,2)} + s_{(2,1,1)} .
\end{eqnarray*}

%
\section{Dual equivalence}
%

Edelman and Greene \cite{EG87}(Definition~6.14) defined involutions on permutations that exchange $i$ and $i+1$ whenever $i-1$ or $i+2$ lies between them. Haiman \cite{Hai92} extended this by defining \newword{elementary dual equivalence involutions} $d_i$ on standard Young tableaux that swap $i$ with $i \pm 1$ whenever $i \mp 1$ lies in between them in the row reading word. For examples of these involutions on Young tableaux, see Fig.~\ref{fig:SYT-deg}.

\begin{figure}[ht]
  \begin{center}
    \begin{tikzpicture}[scale=1.6,
        label/.style={%
          postaction={ decorate,
            decoration={ markings, mark=at position 0.5 with \node #1;}}}]
      \node at (0,0) (A1) {$\tableau{2 \\ 1 & 3 & 4}$};
      \node at (1,0) (A2) {$\tableau{3 \\ 1 & 2 & 4}$};
      \node at (2,0) (A3) {$\tableau{4 \\ 1 & 2 & 3}$};
      \node at (3,0) (B1) {$\tableau{2 & 4 \\ 1 & 3}$};
      \node at (4,0) (B2) {$\tableau{3 & 4 \\ 1 & 2}$};
      \node at (5,0) (C1) {$\tableau{3 \\ 2 \\ 1 & 4}$};
      \node at (6,0) (C2) {$\tableau{4 \\ 2 \\ 1 & 3}$};
      \node at (7,0) (C3) {$\tableau{4 \\ 3 \\ 1 & 2}$};
      \draw[thick,color=red   ,label={[above]{$d_{2}$}}](A1) -- (A2) ;
      \draw[thick,color=blue,label={[above]{$d_{3}$}}]  (A2) -- (A3) ;
      \draw[thick,color=red   ,label={[above]{$d_{2}$}}](B1.05) -- (B2.175) ;
      \draw[thick,color=blue,label={[below]{$d_{3}$}}]  (B1.355) -- (B2.185) ;
      \draw[thick,color=blue,label={[above]{$d_{3}$}}]  (C1) -- (C2) ;
      \draw[thick,color=red   ,label={[above]{$d_{2}$}}](C2) -- (C3) ;
    \end{tikzpicture}
    \caption{\label{fig:SYT-deg}Dual equivalence for $\SYT(3,1) \cup \SYT(2,2) \cup \SYT(2,1,1)$.}
  \end{center}
\end{figure}

Two standard Young tableaux are \newword{dual equivalent} if one can be transformed into the other by a sequence of dual equivalence involutions. Haiman \cite{Hai92} proved that two standard Young tableaux are dual equivalent if and only if they have the same shape. Given this, we may rewrite Eq.~\eqref{e:schur} in terms of dual equivalence classes,
\begin{equation}
  s_{\lambda}(X) \ = \ \sum_{T \in [T_{\lambda}]} F_{\Des(T)}(X),
\label{e:classes}
\end{equation}
where $[T_{\lambda}]$ denotes the dual equivalence class of some fixed $T_{\lambda} \in \SYT(\lambda)$. 

This paradigm shift to summing over objects in a dual equivalence class is the basis for the universal method for proving that a quasisymmetric generating function is symmetric and Schur positive \cite{Ass07,Ass15}. 

Given a set of objects $\mathcal{A}$ and a notion of descents, one can form the quasisymmetric generating function for $\mathcal{A}$ by
\begin{displaymath}
  \sum_{T \in \mathcal{A}} F_{\Des(T)}(X).
\end{displaymath}
Two examples of this are Schur functions generated by standard Young tableaux Eq.~\eqref{e:schur} and Schur $P$-functions generated by signed standard tableau Eq.~\eqref{e:schurP}.

\begin{definition}[\cite{Ass15}]
  Let $\mathcal{A}$ be a finite set, and $\Des$ be a map from $\mathcal{A}$ to subsets of $[n-1]$. A \newword{dual equivalence for $(\mathcal{A},\Des)$} is a family of involutions $\{\varphi_i\}_{1<i<n}$ on $\mathcal{A}$ such that
  \renewcommand{\theenumi}{\roman{enumi}}
  \begin{enumerate}
  \item For all $0 \leq i-h \leq 3$ and all $T \in \mathcal{A}$, there exists a partition $\lambda$ of $i-h+3$ such that
    \[ \sum_{U \in [T]_{(h,i)}} F_{\Des_{(h-1,i+1)}(U)}(X) = s_{\lambda}(X), \]
    where $[T]_{(h,i)}$ is the equivalence class generated by $\varphi_h\ldots\varphi_i$ and $\Des_{(h,i)}(T)$ is the subset of $[i-h]$ obtained by deleting letters less than $h$ and weakly greater than $i$ from $\Des(T)$, and then subtracting $h-1$ from the letters that remain.
    
  \item For all $|i-j| \geq 3$ and all $T \in\mathcal{A}$, we have
    \begin{displaymath}
      \varphi_{j} \varphi_{i}(T) = \varphi_{i} \varphi_{j}(T).
    \end{displaymath}

  \end{enumerate}

  \label{def:deg}
\end{definition}

Assaf \cite{Ass15} showed that the involutions $d_i$ on standard Young tableaux satisfy Definition~\ref{def:deg} and that any involutions satisfying Definition~\ref{def:deg} have $\Des$-isomorphic equivalence classes. 

\begin{theorem}[\cite{Ass15}]
  If $\{\varphi_i\}$ is a dual equivalence for $(\mathcal{A},\Des)$, then
  \begin{equation}
    \sum_{T \in [U]} F_{\Des(T)}(X) \ = \ s_{\lambda}(X)
  \end{equation}
  for any $U \in \mathcal{A}$ and for some partition $\lambda$. In particular, the fundamental quasisymmetric generating function for $\mathcal{A}$ is symmetric and Schur positive.
  \label{thm:deg}
\end{theorem}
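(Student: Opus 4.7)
The plan is to produce, for each equivalence class $[U]$, a partition $\lambda \vdash n$ together with a bijection $\psi \colon [U] \to \SYT(\lambda)$ satisfying $\Des(\psi(T)) = \Des(T)$ and $\psi(\varphi_i(T)) = d_i(\psi(T))$ for all $T \in [U]$ and all $i$, where the $d_i$ are Haiman's elementary dual equivalences. Granting such a $\psi$, Definition~\ref{def:schur} immediately gives
\[
\sum_{T \in [U]} F_{\Des(T)}(X) \ = \ \sum_{T' \in \SYT(\lambda)} F_{\Des(\row(T'))}(X) \ = \ s_\lambda(X),
\]
and summing over equivalence classes then yields Schur positivity of the generating function for $\mathcal{A}$.

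To construct $\psi$, I would proceed by a breadth-first traversal of $[U]$. First take as given that the $d_i$ on $\SYT$ themselves satisfy Definition~\ref{def:deg}, so that $\SYT(\lambda)$ serves as a universal model. Pick any starting image $\psi(U)$ whose local neighborhood in some $\SYT(\lambda)$ under $d_h,\ldots,d_i$ (for $i-h \leq 3$) matches that of $U$ under $\varphi_h,\ldots,\varphi_i$; existence of such a vertex, and hence the shape $\lambda$, is forced by axiom (i) combined with a direct enumeration of the finitely many small SYT dual equivalence graphs. Then propagate by the rule $\psi(\varphi_i(T)) := d_i(\psi(T))$ whenever $\psi(T)$ has already been assigned; descent preservation transports along with the bijection because axiom (i) also pins down the descent statistic within each short window.

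The main obstacle is the well-definedness of $\psi$: any two $\varphi$-paths from $U$ to a common vertex must produce the same composition of $d_i$'s on the $\SYT$ side. Equivalently, every relation among the $\varphi_i$'s on $[U]$ must already hold among the corresponding $d_i$'s on $\SYT(\lambda)$. Axiom (ii) supplies the long-range commutations $\varphi_i\varphi_j = \varphi_j\varphi_i$ for $|i-j| \geq 3$, which match the identical commutations among the $d_i$'s. Axiom (i) supplies the short-range relations: whenever $i - h \leq 3$, the restricted class $[T]_{(h,i)}$ has a prescribed Schur generating function $s_\mu$, and enumerating the small dual equivalence graphs shows that this descent fingerprint determines the graph up to $\Des$-preserving $\varphi$-isomorphism. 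The bulk of the proof is then a patching argument: local $\psi$-isomorphisms on overlapping short windows are glued together, with axiom (ii) used to decouple non-overlapping ones, producing a globally consistent $\psi$ and completing the identification of $[U]$ with $\SYT(\lambda)$.
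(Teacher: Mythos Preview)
The paper does not itself prove this theorem; it is quoted from \cite{Ass15}. However, Section~\ref{sec:queer-deg} reproduces the \cite{Ass15} argument in the queer setting (Lemma~\ref{lem:extend-signs}, Theorem~\ref{thm:cover}, Theorem~\ref{thm:queer-deg}), so the intended mechanism is visible there. Your target---a $\Des$-preserving bijection $\psi\colon[U]\to\SYT(\lambda)$ intertwining $\varphi_i$ with $d_i$---is exactly right, and you correctly isolate well-definedness of the propagation as the crux.

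The gap is that your ``patching argument'' is a slogan rather than a proof. You assert that axioms~(i) and~(ii) generate all relations among the $\varphi_i$ on $[U]$, but this is essentially the entire content of the theorem, and \cite{Ass15} does \emph{not} establish it by enumerating relations and checking cycles. The argument is by induction on $n$: assume each restricted class $[T]_{\le n-1}$ is already in $\Des$-preserving, edge-intertwining bijection with some $\SYT(\lambda)$; then \cite{Ass15}(Lemma~3.11), quoted in this paper just before Lemma~\ref{lem:extend-signs}, pins down a unique $\mu\supset\lambda$ to which the bijection extends descent-compatibly; then \cite{Ass15}(Theorem~3.13), mirrored here as Theorem~\ref{thm:cover}, shows the lifted map respects $\varphi_{n-1}$ and that all restricted classes land in a common $\SYT(\mu)$. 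Injectivity of the lift is a separate step requiring the full strength of condition~(i) at $i-h=3$ (equivalently axiom~6 of \cite{Ass15}). Your BFS picture gives no mechanism for determining the global shape $\lambda$---it is not read off from any single local window---and no argument that $\psi$ is a bijection rather than a nontrivial cover; compare the failure mode in Figure~\ref{fig:cover}, which is exactly what the inductive injectivity step is designed to rule out.
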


To demonstrate the utility of Theorem~\ref{thm:deg}, we recall the dual equivalence involutions on signed standard tableaux defined in \cite{Ass18}(Definition~6.1).

\begin{definition}[\cite{Ass18}]
  Let $S$ be a signed standard tableau. For $1<i<n$, let $a \leq b \leq c$ be the diagonals (row minus column if unmarked; column minus row if marked) on which $i-1,i,i+1$ reside. Then $d_i(w)$ is given by the following rule:
  \begin{enumerate}
  \item if $i$ lies on diagonal $b$, then $d_i(S) = S$;
  \item else if $a=b$ (respectively $b=c$), then toggle the sign on the letter on diagonal $c$ (respectively $a$);
  \item else if $\left| |a|-|c| \right| = 1$, then toggle the signs on the letters on diagonals $a$, $c$;
  \item else swap the absolute values of letters of diagonals $a$ and $c$, maintaining the signs in the given cells.
  \end{enumerate}
  \label{def:deg_shifted}
\end{definition}

The nontrivial cases of Definition~\ref{def:deg_shifted} are illustrated in Fig.~\ref{fig:sh-deg}. It is shown in \cite{Ass18} that case (2) arises only when $i-1$ and $i+1$ lie on the main diagonal. For case (3), the cells on diagonals $a$ and $c$ must form a horizontal or vertical domino. For case (4), the decorations indicate that the letters may be signed or unsigned, but the signs remain constant within the given cells. 

\begin{figure}[ht]
  \begin{center}
    \begin{tikzpicture}[xscale=2.2,yscale=1.0,
        label/.style={%
          postaction={ decorate,
            decoration={ markings, mark=at position 0.5 with \node #1;}}}]
      \node at (0,2)  (A2)  {$\widetableau{& \ipo \\ \imo & i }$};
      \node at (1,2)  (B2)  {$\widetableau{& \ipo \\ \imo & \st{i} }$};      
      \node at (2,3.5)(A3a) {$\widetableau{\imo & \st{i}}$};
      \node at (3,3.5)(B3a) {$\widetableau{\st{\imo} & i}$};      
      \node at (2,2.5)(A3b) {$\widetableau{i & \st{\ipo}}$};
      \node at (3,2.5)(B3b) {$\widetableau{\st{i} & \ipo}$};      
      \node at (2,1.4)(A3c) {$\widetableau{\ipo \\ \st{i}}$};
      \node at (3,1.4)(B3c) {$\widetableau{\st{\ipo} \\ i}$};      
      \node at (2,0)  (A3d) {$\widetableau{i \\ \st{\imo}}$};
      \node at (3,0)  (B3d) {$\widetableau{\st{i} \\ \imo}$};      
      \node at (4,1)  (A4a) {$\widetableau{i^{*} \\ & \imo^{\dagger}}$};
      \node at (5,1)  (B4a) {$\widetableau{\imo^{*} \\ & i^{\dagger}}$};      
      \node at (4,2.5)  (A4b) {$\widetableau{\ipo^{*} \\ & i^{\dagger}}$};
      \node at (5,2.5)  (B4b) {$\widetableau{i^{*} \\ & \ipo^{\dagger}}$};
      \node at (0.5,-1) {Case (2): $a=b=0$};
      \node at (2.5,-1) {Case (3): $|a|-|c|=\pm 1$};
      \node at (4.5,-1) {Case (4): otherwise};
      \draw[thick,color=red   ,<->,label={[above]{$d_{i}$}}](A2) -- (B2) ;
      \draw[thick,color=red   ,<->,label={[above]{$d_{i}$}}](A3a) -- (B3a) ;
      \draw[thick,color=red   ,<->,label={[above]{$d_{i}$}}](A3b) -- (B3b) ;
      \draw[thick,color=red   ,<->,label={[above]{$d_{i}$}}](A3c) -- (B3c) ;
      \draw[thick,color=red   ,<->,label={[above]{$d_{i}$}}](A3d) -- (B3d) ;
      \draw[thick,color=red   ,<->,label={[above]{$d_{i}$}}](A4a) -- (B4a) ;
      \draw[thick,color=red   ,<->,label={[above]{$d_{i}$}}](A4b) -- (B4b) ; 
    \end{tikzpicture}
    \caption{\label{fig:sh-deg}Illustration of the nontrivial cases for dual equivalence involutions on signed standard tableaux.}
  \end{center}
\end{figure}

For example, Fig.~\ref{fig:SST-deg} shows dual equivalence on signed standard tableaux of shape $(3,1)$. From left to right, the nontrivial actions are by cases (2), (4), (3), (4), and (2). Compare this structure with the dual equivalence structure on standard Young tableaux in Fig.~\ref{fig:SYT-deg}.

\begin{figure}[ht]
  \begin{center}
    \begin{tikzpicture}[scale=1.6,
        label/.style={%
          postaction={ decorate,
            decoration={ markings, mark=at position 0.5 with \node #1;}}}]
      \node at (0,0) (A1) {$\tableau{& 3 \\ 1 & \st{2} & 4}$};
      \node at (1,0) (A2) {$\tableau{& 3 \\ 1 & 2 & 4}$};
      \node at (2,0) (A3) {$\tableau{& 4 \\ 1 & 2 & 3}$};
      \node at (3,0) (B1) {$\tableau{& 4 \\ 1 & \st{2} & 3}$};
      \node at (4,0) (B2) {$\tableau{& 4 \\ 1 & 2 & \st{3}}$};
      \node at (5,0) (C1) {$\tableau{& 4 \\ 1 & \st{2} & \st{3}}$};
      \node at (6,0) (C2) {$\tableau{& 3 \\ 1 & \st{2} & \st{4}}$};
      \node at (7,0) (C3) {$\tableau{& 3 \\ 1 & 2 & \st{4}}$};
      \draw[thick,color=red   ,label={[above]{$d_{2}$}}](A1) -- (A2) ;
      \draw[thick,color=blue,label={[above]{$d_{3}$}}]  (A2) -- (A3) ;
      \draw[thick,color=red   ,label={[above]{$d_{2}$}}](B1.05) -- (B2.175) ;
      \draw[thick,color=blue,label={[below]{$d_{3}$}}]  (B1.355) -- (B2.185) ;
      \draw[thick,color=blue,label={[above]{$d_{3}$}}]  (C1) -- (C2) ;
      \draw[thick,color=red   ,label={[above]{$d_{2}$}}](C2) -- (C3) ;
    \end{tikzpicture}
    \caption{\label{fig:SST-deg}Dual equivalence for $\SST(3,1)$.}
  \end{center}
\end{figure}

Therefore \cite{Ass18}(Theorem~6.3) gives another proof of the nonnegativity of the Schur coefficients of Schur $P$-functions using dual equivalence.

\begin{theorem}[\cite{Ass18}]
  The involutions $\{d_i\}_{1<i<n}$ give a dual equivalence for signed standard tableaux. In particular, Schur $P$-functions are Schur positive.
  \label{thm:strong_pos}
\end{theorem}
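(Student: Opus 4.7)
The plan is to verify that the involutions $\{d_i\}_{1<i<n}$ of Definition~\ref{def:deg_shifted} satisfy both axioms of Definition~\ref{def:deg} on $\mathcal{A} = \SST(\gamma)$ endowed with the descent map from Definition~\ref{def:schurP}. Once this is established, Theorem~\ref{thm:deg} immediately implies that each $d$-equivalence class contributes a single Schur function to $P_\gamma$, yielding Schur positivity.

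First I would dispatch axiom (ii), the commutation condition for $|i-j|\ge 3$. Inspecting Definition~\ref{def:deg_shifted}, the action of $d_i$ only reads the diagonals of the cells containing $i-1,i,i+1$ and only alters those cells, either by toggling signs or by swapping absolute values. When $|i-j|\ge 3$ the windows $\{i-1,i,i+1\}$ and $\{j-1,j,j+1\}$ are disjoint, so $d_i$ and $d_j$ act on disjoint cells and therefore commute.

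The substantive work is axiom (i): for each $T\in\SST(\gamma)$ and each pair $h\le i$ with $0\le i-h\le 3$, the quasisymmetric polynomial over the restricted equivalence class $[T]_{(h,i)}$ must equal $s_\lambda$ for some partition $\lambda$ of $i-h+3$. The strategy is a local case analysis. Since the involutions $d_h,\ldots,d_i$ only examine the cells containing the entries $h-1,h,\ldots,i+1$, and all other cells are fixed across the class, I would freeze the positions and labels of the other entries of $T$ and enumerate the possible subconfigurations of at most six consecutive entries in the shifted diagram. For each such configuration, I would exhibit a bijection with a Haiman dual equivalence class of ordinary standard Young tableaux of size $i-h+3$ that preserves the descent data (under the index shift defining $\Des_{(h-1,i+1)}$), and then invoke Eq.~\eqref{e:classes} to identify the generating polynomial as a Schur function.

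The main obstacle is this case analysis. Unlike the unshifted setting, the shifted diagram admits signed entries off the main diagonal and has a main diagonal that behaves differently from other diagonals, so each of the four cases of Definition~\ref{def:deg_shifted} — identity, main-diagonal sign toggle, domino sign-flip, and absolute-value swap — must be tracked separately, and the resulting local pictures must be matched one-by-one to a Haiman dual equivalence class of the same descent type. Particular care is needed for interactions between successive involutions: a sign toggle induced by $d_i$ can change which case $d_{i\pm 1}$ enters, and configurations meeting the main diagonal constrain the possible signs. Once every local configuration is reconciled with an unshifted SYT class of size $i-h+3$, axiom (i) is verified, and Theorem~\ref{thm:deg} completes the argument that $P_\gamma$ is a nonnegative integer sum of Schur functions.
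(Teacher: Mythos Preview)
This theorem is quoted from \cite{Ass18} and is not proved in the present paper, so there is no in-paper proof to compare against. Your outline is correct and matches the approach of \cite{Ass18}: axiom~(ii) follows from the locality of $d_i$ exactly as you say, and axiom~(i) is handled by a finite local case analysis on the relative diagonals of a window of at most six consecutive entries (the same reduction is invoked in the proof of Theorem~\ref{thm:deg-tensor} here, where it is described as a computer check). The only difference in emphasis is that the cited proof verifies axiom~(i) by directly enumerating the descent patterns over each local configuration rather than by constructing explicit descent-preserving bijections with unshifted Haiman classes, but these amount to the same finite computation.
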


%
\section{A queer involution}
%
\label{sec:queer}

We augment the dual equivalence structure on signed standard tableaux in Definition~\ref{def:deg_shifted} with an additional involution so that the graph on signed standard tableaux of a given shape is connected.

\begin{definition}
  The \newword{elementary queer involution}, denoted by $d_0$, acts on signed standard tableaux by toggling the mark on the entry with absolute value $2$. 
  \label{def:d_0}
\end{definition}

Given the row and column conditions on signed standard tableaux, the entry with absolute value $2$ must always lie in the first row and second column. In particular, it is never on the main diagonal, and so it may always come with or without a marking. Therefore $d_0$ is indeed a well-defined involution. For examples, see Fig.~\ref{fig:qdeg-31} and Fig.~\ref{fig:qdeg-4}.

\begin{figure}[ht]
  \begin{center}
    \begin{tikzpicture}[scale=1.6,
        label/.style={%
          postaction={ decorate,
            decoration={ markings, mark=at position 0.5 with \node #1;}}}]
      \node at (0,0) (A1) {$\tableau{& 3 \\ 1 & \st{2} & 4}$};
      \node at (1,0) (A2) {$\tableau{& 3 \\ 1 & 2 & 4}$};
      \node at (2,0) (A3) {$\tableau{& 4 \\ 1 & 2 & 3}$};
      \node at (3,0) (B1) {$\tableau{& 4 \\ 1 & \st{2} & 3}$};
      \node at (4,0) (B2) {$\tableau{& 4 \\ 1 & 2 & \st{3}}$};
      \node at (5,0) (C1) {$\tableau{& 4 \\ 1 & \st{2} & \st{3}}$};
      \node at (6,0) (C2) {$\tableau{& 3 \\ 1 & \st{2} & \st{4}}$};
      \node at (7,0) (C3) {$\tableau{& 3 \\ 1 & 2 & \st{4}}$};
      \draw[thick,color=red   ,label={[above]{$d_{2}$}}](A1.05) -- (A2.175) ;
      \draw[thick,color=blue,label={[above]{$d_{3}$}}]  (A2) -- (A3) ;
      \draw[thick,color=red   ,label={[above]{$d_{2}$}}](B1.05) -- (B2.175) ;
      \draw[thick,color=blue,label={[below]{$d_{3}$}}]  (B1.355) -- (B2.185) ;
      \draw[thick,color=blue,label={[above]{$d_{3}$}}]  (C1) -- (C2) ;
      \draw[thick,color=red   ,label={[above]{$d_{2}$}}](C2.05) -- (C3.175) ;
      \draw[thick,color=violet,label={[below]{$d_{0}$}}] (A1.355) -- (A2.185) ;
      \draw[thick,color=violet,label={[below]{$d_{0}$}}] (A3) -- (B1) ;
      \draw[thick,color=violet,label={[below]{$d_{0}$}}] (B2) -- (C1) ;
      \draw[thick,color=violet,label={[below]{$d_{0}$}}] (C2.355) -- (C3.185) ;
    \end{tikzpicture}
    \caption{\label{fig:qdeg-31}Queer dual equivalence for $\SST(3,1)$.}
  \end{center}
\end{figure}

\begin{definition}
  Two signed marked tableaux $S, T$ are \newword{queer dual equivalent} if one can be transformed into the other by applying a sequence of elementary dual equivalence and elementary queer involutions.
  \label{def:queer_equiv}
\end{definition}

\begin{figure}[ht]
  \begin{center}
    \begin{tikzpicture}[xscale=2.2,
        label/.style={%
          postaction={ decorate,
            decoration={ markings, mark=at position 0.5 with \node #1;}}}]
      \node at (1,1) (T1) {$\tableau{1 & 2 & 3 & 4}$};
      \node at (2,1) (T2) {$\tableau{1 & \st{2} & 3 & 4}$};
      \node at (3,1) (T3) {$\tableau{1 & 2 & \st{3} & 4}$};
      \node at (4,1) (T4) {$\tableau{1 & 2 & 3 & \st{4}}$};
      \node at (3,0) (B3) {$\tableau{1 & \st{2} & \st{3} & 4}$};
      \node at (4,0) (B4) {$\tableau{1 & \st{2} & 3 & \st{4}}$};
      \node at (5,0) (B5) {$\tableau{1 & 2 & \st{3} & \st{4}}$};
      \node at (6,0) (B6) {$\tableau{1 & \st{2} & \st{3} & \st{4}}$};
      \draw[thick,color=red   ,label={[above]{$d_{2}$}}](T2) -- (T3) ;
      \draw[thick,color=blue,label={[above]{$d_{3}$}}]  (T3) -- (T4) ;
      \draw[thick,color=blue,label={[above]{$d_{3}$}}]  (B3) -- (B4) ;
      \draw[thick,color=red   ,label={[above]{$d_{2}$}}](B4) -- (B5) ;
      \draw[thick,color=violet,label={[above]{$d_{0}$}}] (T1) -- (T2) ;
      \draw[thick,color=violet,label={[left]{$d_{0}$}}]  (T3) -- (B3) ;
      \draw[thick,color=violet,label={[right]{$d_{0}$}}] (T4) -- (B4) ;
      \draw[thick,color=violet,label={[above]{$d_{0}$}}] (B5) -- (B6) ;
    \end{tikzpicture}
    \caption{\label{fig:qdeg-4}Queer dual equivalence for $\SST(4)$.}
  \end{center}
\end{figure}

\begin{theorem}
  Two signed standard tableaux are queer dual equivalent if and only if they have the same shape.
  \label{thm:shape}
\end{theorem}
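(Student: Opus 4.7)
The plan is to establish both directions separately. The \emph{only if} direction is immediate: each $d_i$ for $1 < i < n$ permutes absolute values and toggles signs within the existing cells of the shifted diagram (Definition~\ref{def:deg_shifted}), and $d_0$ merely toggles the sign on the entry of absolute value $2$ (Definition~\ref{def:d_0}). Hence shape is an invariant of queer dual equivalence.

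For the \emph{if} direction, I would argue by induction on $n = |\gamma|$, with the base cases $n \leq 3$ checked directly; the graphs in Figs.~\ref{fig:qdeg-31} and~\ref{fig:qdeg-4} illustrate the idea. None of $d_0, d_2, \ldots, d_{n-2}$ affects the letter of absolute value $n$, so these involutions preserve both the cell of $\gamma$ containing $n$ and the sign on $n$. This partitions $\SST(\gamma)$ into blocks $\SST(\gamma; c, \epsilon)$ indexed by an outer corner $c$ of $\gamma$ and a sign $\epsilon$, with $\epsilon = +$ forced when $c$ lies on the main diagonal. Each block is in $\{d_0, d_2, \ldots, d_{n-2}\}$-equivariant bijection with $\SST(\gamma \setminus c)$ via removal of the cell containing $n$, and since the diagonals of $1, \ldots, n-1$ are unchanged by this restriction, the inductive hypothesis collapses each block to a single queer dual equivalence class under these involutions.

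The remaining task is to glue the blocks together using $d_{n-1}$. Its case (4) swaps the cells holding $n-1$ and $n$, which can both relocate $n$ to a different outer corner and, because signs remain attached to the given cells rather than to absolute values, simultaneously flip the sign on $n$ by choosing the sign state at the target cell appropriately; its cases (2) and (3) toggle the signs of $n-1$ and $n$ when the diagonals of $n-2, n-1, n$ meet the requisite adjacency. The inductive hypothesis furnishes full freedom to rearrange the entries $1, \ldots, n-1$ within $\gamma \setminus c$ before applying $d_{n-1}$, so the tableau can always be staged so that $d_{n-1}$ realizes a desired transition between two specified blocks.

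The main obstacle is to verify that the resulting ``block graph,'' whose vertices are the blocks $\SST(\gamma; c, \epsilon)$ and whose edges are the $d_{n-1}$-connections established above, is connected for every strict partition $\gamma$. This amounts to a case analysis over the outer corners of $\gamma$ and their diagonal configurations, together with a careful argument that in each configuration the inductive hypothesis can be invoked to produce a tableau exhibiting the required $d_{n-1}$-move. The most delicate sub-case is when $\gamma$ admits a single outer corner off the main diagonal (for example $\gamma = (k)$), where one must route through case (3) of $d_{n-1}$ to toggle the sign on $n$, after first using the inductive hypothesis to place $n-1$ at a diagonal whose absolute value differs from that of $c$ by one.
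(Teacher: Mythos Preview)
Your proposal is correct and follows essentially the same strategy as the paper: induct on $n$, partition $\SST(\gamma)$ into blocks according to the cell and sign of the entry with absolute value $n$, collapse each block by induction via $d_0,d_2,\ldots,d_{n-2}$, and then connect the blocks by staging $d_{n-1}$ (case~(4) when another removable corner is available, case~(3) when the unique corner lies off the main diagonal). The paper's Cases~A1--A4 and~B are precisely the explicit verification of your block-graph connectivity, including the delicate single-corner sub-case you flag.
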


\begin{proof}
  Since none of the involutions changes the shape of the tableau, we need only show that any two signed standard tableaux of shape $\gamma$ are queer dual equivalent. We proceed by induction on the size of $\gamma$. For $|\gamma|=1$, the result is trivial. For $|\gamma|=2$, we must have $\gamma=(2)$, and the two signed tableaux of shape $(2)$ are related by $d_0$ as shown below,
  \[ \tableau{1 & 2} \stackrel{\displaystyle d_0}{\longleftrightarrow} \tableau{1 & \st{2}} . \]
  Thus we may assume $|\gamma|=n \geq 3$. Let $S, T \in \SST(\gamma)$. Consider the cells occupied by the entries with absolute value $n$ in both $S$ and $T$.

  \textbf{Case A:} Consider first the case when both cells are in the same position in $S$ and in $T$. If both cells are marked or if both cells are not marked, then the result follows by induction by considering the signed standard tableau obtained by removing these cells. Otherwise, we may assume $n$ occurs in $S$ in the same position as $\st{n}$ occurs in $T$. In particular, the cell, say $x$, is not on the main diagonal. 

  \emph{Subcase A1:} Suppose there is a nearest removable corner, say $y$, that lies strictly above and strictly left of $x$. In this case, let $S^{\prime} \in \SST(\gamma)$ be any tableau with (unmarked) $n$ in position $x$, (unmarked) $n-1$ in position $y$, and (unmarked) $n-2$ immediately below $y$ if $y$ is one column left of $x$ or immediately left of $x$ if not. Similarly, let $T^{\prime} \in \SST(\gamma)$ be any tableau with $\st{n}$ in position $x$, (unmarked) $n-1$ in position $y$, and $\st{n-2}$ immediately below $y$ if $y$ is one column left of $x$ or immediately left of $x$ if not. For example, see Fig.~\ref{fig:row-above}. Then $S$ is queer dual equivalent to $S^{\prime}$ and $T$ is queer dual equivalent to $T^{\prime}$ by the previous argument. Finally, $d_{n-1}$ acts on both $S^{\prime}$ and $T^{\prime}$ by Definition~\ref{def:deg_shifted}(4), resulting in (unmarked) $n$ in position $y$ in both $d_{n-1}(S^{\prime})$ and $d_{n-1}(T^{\prime})$. Thus these latter two are also queer dual equivalent, completing the queer dual equivalence of $S$ and $T$. 

  \begin{figure}[ht]
    \begin{center}
      \begin{tikzpicture}[scale=2,
          label/.style={%
            postaction={ decorate,
              decoration={ markings, mark=at position 0.5 with \node #1;}}}]
        \node at (0,0) (S) {$\tableau{& & 6 \\ & \ & 5 \\ \ & \ & \ & 7}$};
        \node at (1,0) (S1){$\tableau{& & 7 \\ & \ & 5 \\ \ & \ & \ & 6}$};
        \node at (3,0) (T1){$\tableau{& & 7 \\ & \ & \st{5} \\ \ & \ & \ & \st{6}}$};
        \node at (4,0) (T) {$\tableau{& & 6 \\ & \ & \st{5} \\ \ & \ & \ & \st{7}}$};
        \draw[thick,color=blue,label={[above]{$d_{6}$}}]  (S) -- (S1) ;
        \draw[thick,color=blue,label={[above]{induction}}]  (S1) -- (T1) ;
        \draw[thick,color=blue,label={[above]{$d_{6}$}}]  (T1) -- (T) ;
      \end{tikzpicture}
      \caption{\label{fig:row-above}Example of the queer dual equivalence when the cells $x$ are in the same position and $\gamma$ has a higher removable corner.}
    \end{center}
  \end{figure}

  \emph{Subcase A2:} Suppose there is nearest removable corner, say $y$, that lies strictly below and strictly right of $x$. In this case, let $S^{\prime} \in \SST(\gamma)$ be any tableau with (unmarked) $n$ in position $x$, (unmarked) $n-1$ in position $y$, and (unmarked) $n-2$ immediately left of $y$ if $y$ is one row below $x$ or immediately below $x$ if not. Similarly, let $T^{\prime} \in \SST(\gamma)$ be any tableau with $\st{n}$ in position $x$, (unmarked) $n-1$ in position $y$, and $\st{n-2}$ immediately left of $y$ if $y$ is one row below $x$ or immediately below $x$ if not. This is the transpose of the case depicted in Fig.~\ref{fig:row-above}. Then $S$ is queer dual equivalent to $S^{\prime}$ and $T$ is queer dual equivalent to $T^{\prime}$ by the previous argument, and $d_{n-1}$ acts on both $S^{\prime}$ and $T^{\prime}$ by Definition~\ref{def:deg_shifted}(4), resulting in (unmarked) $n$ in position $y$ in both $d_{n-1}(S^{\prime})$ and $d_{n-1}(T^{\prime})$. Thus these latter two are also queer dual equivalent, completing the queer dual equivalence of $S$ and $T$. 

  \emph{Subcase A3:} Suppose $x$ is the only removable corner and there are at least two cells to its left. Then let $S^{\prime} \in \SST(\gamma)$ be any tableau with (unmarked) $n$ in position $x$, $\st{n-1}$ immediately to its left, and (unmarked) $n-2$ immediately to its left. Then set $T^{\prime} = d_{n-1}(S^{\prime})$, which by Definition~\ref{def:deg_shifted}(3) will have $\st{n}$ in position $x$, (unmarked) $n-1$ immediately to its left, and (unmarked) $n-2$ immediately to its left. Then again $S$ and $\st{S}$ and also $T$ and $\st{T}$ are queer dual equivalent by virtue of the same marking on $n$ in the same cell, so $S$ is queer dual equivalent to $T$. 

  \emph{Subcase A4:} Otherwise, $x$ is the only removable corner and there is one cells to its left (since $x$ is not on the main diagonal) and a cell immediately below $x$ (since $n \geq 3$). Then let $S^{\prime} \in \SST(\gamma)$ be any tableau with (unmarked) $n$ in position $x$, $\st{n-1}$ immediately below $x$, and (unmarked) $n-2$ immediately left of $x$. Then set $T^{\prime} = d_{n-1}(S^{\prime})$, which by Definition~\ref{def:deg_shifted}(3) will have $\st{n}$ in position $x$, (unmarked) $n-1$ immediately below $x$, and (unmarked) $n-2$ immediately left of $x$. Then again $S$ and $\st{S}$ and also $T$ and $\st{T}$ are queer dual equivalent by virtue of the same marking on $n$ in the same cell, so $S$ is queer dual equivalent to $T$.

  \textbf{Case B:} Consider second the case when the cells are in different positions in $S$ and in $T$, say cell $x$ in $S$ and cell $y$ in $T$ and assume $y$ lies above and left of $x$. Then both $x$ and $y$ are removable corners of $\gamma$. In this case, let $\st{S} \in \SST(\gamma)$ be any tableau with $n$ in position $x$, $n-1$ in position $y$, and $n-2$ immediately below $y$ if $y$ is one column left of $x$ or immediately left of $x$ if not. If $n$ is marked in $S$, then mark each of $n-2,n-1,n$ in $\st{S}$; otherwise leave all three entries unmarked in $\st{S}$. Set $\st{T} = d_{n-1}(\st{S})$, which by Definition~\ref{def:deg_shifted}(4) will have $n$ in position $y$, $n-1$ in position $x$, and $n-2$ immediately below $y$ if $y$ is one column left of $x$ or immediately left of $x$ if not, with all marked or unmarked according to their state in $\st{S}$. Then $S$ is queer dual equivalent to $\st{S}$, and $T$ is queer dual equivalent to $\st{T}$ by case A, completing the proof that $S$ is queer dual equivalent to $T$.
\end{proof}

Theorem~\ref{thm:shape} allows us to shift our paradigm for Schur $P$-functions to
\begin{equation}
  P_{\gamma}(X) \ = \ \sum_{S \in [S_{\gamma}]} F_{\Des(S)}(X),
\label{e:queer-classes}
\end{equation}
where $[S_{\gamma}]$ denotes the queer dual equivalence class of some fixed $S_{\gamma} \in \SST(\gamma)$.

Note that, as with the dual equivalence classes, there is a natural choice for $S_{\gamma}$ for the queer dual equivalence class representative. Namely, if $S_{\gamma}$ is filled with the identity permutation from left to right beginning with the lowest row, then $S_{\gamma}$ is the unique tableau $S \in \SST(\gamma)$ such that $\Des(S) = \{\gamma_1, \gamma_1+\gamma_2, \ldots, \gamma_1+\cdots+\gamma_{\ell-1}\}$. 

The following corollary to the inductive proof of Theorem~\ref{thm:shape} will prove useful.

\begin{corollary}
  For $S,T \in \SST(\gamma)$ for some strict partition $\gamma$ of $n$, then there exists a sequence of indices $0 \leq i_1, i_2,\ldots,i_j< n$ at most two of which are $n-1$ such that $T = d_{i_j} \cdots d_{i_2} d_{i_1} (S)$. 
  \label{cor:two-edge}
\end{corollary}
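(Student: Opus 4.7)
The plan is to induct on $n = |\gamma|$, tracing the case analysis in the proof of Theorem~\ref{thm:shape} and tallying the applications of $d_{n-1}$. The base cases $n \leq 2$ are immediate, since for $n=2$ the only nontrivial pair is related by a single $d_0$, which is not $d_{n-1}=d_1$.

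The key observation is that for $i < n-1$, the involution $d_i$ acts only on entries of absolute value in $\{i-1,i,i+1\} \subseteq \{1,\ldots,n-1\}$ and hence fixes both the cell and the marking of the entry of absolute value $n$. Whenever two signed standard tableaux of shape $\gamma$ agree on the cell and the sign of $n$, removing that cell produces a pair in $\SST(\gamma')$ for a strict partition $\gamma'$ of size $n-1$; the inductive hypothesis then supplies a connecting sequence all of whose indices are strictly less than $n-1$, contributing no $d_{n-1}$ to the final tally. It therefore suffices to count only the explicit applications of $d_{n-1}$ arising at the outermost level of the Theorem~\ref{thm:shape} recursion.

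In Case A with matching markings the count is zero. In Subcases A1--A4 the auxiliary tableaux $S^{\prime}$ and $T^{\prime}$ are built to agree with $S$ and $T$ respectively on the cell and sign of $n$, and $d_{n-1}(S^{\prime})$ and $d_{n-1}(T^{\prime})$ likewise share the cell and sign of $n$ (both unmarked $n$ at position $y$ in the case~(4) subcases; both with identically toggled sign at position $x$ in the case~(3) subcases). Each of the three surrounding reductions $S \sim S^{\prime}$, $T \sim T^{\prime}$, and $d_{n-1}(S^{\prime}) \sim d_{n-1}(T^{\prime})$ therefore contributes no $d_{n-1}$, and the two explicit applications to $S^{\prime}$ and $T^{\prime}$ yield the bound of two.

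The chief subtlety is Case B, where a naive construction of $\st{S}$ could yield $\st{T} = d_{n-1}(\st{S})$ with the wrong marking of $n$ at $y$ and force a further pair of $d_{n-1}$'s to correct it. Since Definition~\ref{def:deg_shifted}(4) preserves signs in cells, the marking of $n$ in $\st{T}$ at position $y$ coincides with the sign of $n-1$ in $\st{S}$ at $y$; choosing that sign to match the marking of $n$ in $T$ at $y$ turns $T \sim \st{T}$ into a same-marking instance of Case A and so contributes no $d_{n-1}$. When $y$ lies on the main diagonal, both $T$ and $\st{T}$ are forced to have unmarked $n$ at $y$, so the agreement is automatic; otherwise the sign at $y$ in $\st{S}$ is free. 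Together with the zero-cost $S \sim \st{S}$ and the single explicit $d_{n-1}$, Case B contributes at most one. Verifying this propagation of the auxiliary sign through $d_{n-1}$ is the main obstacle, and with it the induction closes, yielding a sequence containing at most two $d_{n-1}$'s.
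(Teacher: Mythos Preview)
Your proposal is correct and follows the paper's intended route—the corollary is stated without proof as a consequence of the induction in Theorem~\ref{thm:shape}, and you have carried out exactly that bookkeeping. Your refinement in Case~B (choosing the sign of $n-1$ at $y$ in $\bar{S}$ to match the sign of $n$ in $T$) is in fact necessary: the paper's construction, which assigns all of $n-2,n-1,n$ the sign of $n$ in $S$, can land $\bar{T}$ with the wrong sign at $y$ and push the count to three, so you have supplied a detail the paper glosses over.

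One small wrinkle in your write-up: your uniform description of Subcases A1--A4 does not literally apply to A3/A4. There $T' = d_{n-1}(S')$, so $d_{n-1}(S') = T'$ and $d_{n-1}(T') = S'$ carry \emph{opposite} signs on $n$ at $x$, contrary to your parenthetical. This is harmless—the chain in A3/A4 is simply $S \sim S' \xrightarrow{d_{n-1}} T' \sim T$ with a single $d_{n-1}$—but the sentence as written is false for those subcases. You should also make explicit that in your Case~B refinement the sign of $n-2$ is chosen so that its diagonal lies strictly between those of $n$ and $n-1$, ensuring Definition~\ref{def:deg_shifted}(4) still governs; this always succeeds since distinct removable corners of a shifted shape have absolute diagonals differing by at least two.
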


%
\section{Queer dual equivalence}
%
\label{sec:queer-deg}

Again using the model of a set of objects $\mathcal{A}$ and a notion of descents, we can now give sufficient conditions to deduce that the quasisymmetric generating function for $\mathcal{A}$ is Schur $P$-positive.

\begin{definition}
  Let $\mathcal{A}$ be a finite set, and $\Des$ be a map from $\mathcal{A}$ to subsets of $[n-1]$. A \newword{queer dual equivalence for $(\mathcal{A},\Des)$} is dual equivalence $\{\psi_i\}_{1<i<n}$ together with a queer involution $\psi_0$ on $\mathcal{A}$ such that
  \renewcommand{\theenumi}{\roman{enumi}}
  \begin{enumerate}
  \item For $i = 1,2,3$ and $S \in \mathcal{A}$, there exists a strict partition $\gamma$ of $i+1$ such that
    \[ \sum_{U \in [S]_{\leq i}} F_{\Des(U)\cap[i]}(X) = P_{\gamma}(X), \]
    where $[S]_{\leq i}$ is the equivalence class generated by $\psi_h$ for $h\leq i$ ($h\neq 1$).
    
  \item For all $i > 3$ and all $S \in\mathcal{A}$, we have
    \begin{displaymath}
      \psi_{0} \psi_{i}(S) = \psi_{i} \psi_{0}(S).
    \end{displaymath}

  \item For $k < n$ and $S,T \in \mathcal{A}$ , if $T \in [S]_{\leq k}$, then there is a sequence of indices $0 \leq i_1, i_2,\ldots,i_j< k$ at most two of which are $k-1$ such that $T = \psi_{i_j} \cdots \psi_{i_2} \psi_{i_1} S$. 
    
  \end{enumerate}

  \label{def:queer-deg}
\end{definition}

\begin{remark}
  Taking $i=1$ in condition (i) of Definition~\ref{def:queer-deg} forces the queer involution $\psi_0$ to be fixed-point-free, since the only degree $2$ Schur $P$-function is $P_{(2)} = F_{\{\}} + F_{\{1\}}$, which has two terms. Moreover, we also must have $1 \in \Des(S)$ if and only if $1 \not\in\Des(\psi_0(S))$ for any $S \in \mathcal{A}$. 
  \label{rmk:fpf}
\end{remark}

Condition (iii) of Definition~\ref{def:queer-deg} parallels the non-local axiom $6$ of the equivalent characterization of dual equivalence stated in \cite{Ass15}(Definition~3.2). In the latter, axiom $6$ allows for at most one edge of maximum label. As proven in \cite{Ass15}(Theorem~4.2), axiom $6$ is equivalent to the \emph{local} condition (i) of Definition~\ref{def:deg}. In Section~\ref{sec:crystals}, we discuss efforts to give an equivalent local characterization for condition (iii) of Definition~\ref{def:queer-deg}.

To begin to justify our definition, we have the following simple check.

\begin{theorem}
  The queer dual equivalence involutions $d_0, \{d_i\}_{1<i<n}$ are a queer dual equivalence for signed standard tableaux.
  \label{thm:good-def}
\end{theorem}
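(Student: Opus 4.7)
The plan is to verify each of the three conditions of Definition~\ref{def:queer-deg} for $(\psi_0,\psi_i)=(d_0,d_i)$, in every case reducing to queer dual equivalence on an appropriate sub-tableau. Explicitly, for any $k<n$ and any $S\in\SST(\lambda)$, I would let $S^{*}$ denote the sub-tableau consisting of the cells whose entries have absolute value at most $k+1$. Since peeling off maximal-value entries always removes a corner of the shifted shape, $S^{*}$ has strict shape $\gamma$ with $|\gamma|=k+1$. The involutions $d_0,d_2,\ldots,d_k$ modify only entries of absolute value at most $k+1$, and their cases in Definitions~\ref{def:d_0} and~\ref{def:deg_shifted} depend only on those entries, so they descend to queer dual equivalence involutions on $\SST(\gamma)$; by Theorem~\ref{thm:shape} the orbit of $S^{*}$ fills $\SST(\gamma)$, while the cells outside $S^{*}$ are pointwise fixed. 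Restriction therefore yields a bijection $[S]_{\leq k}\longleftrightarrow\SST(\gamma)$.

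Given this bijection, condition (i) reduces for $i\in\{1,2,3\}$ to the descent-compatibility statement $\Des(U)\cap[i]=\Des(U^{*})$ for each $U\in[S]_{\leq i}$; once that identity is in hand, the generating function rewrites as $\sum_{U^{*}\in\SST(\gamma)}F_{\Des(U^{*})}(X)$, which equals $P_{\gamma}(X)$ by Equation~\ref{e:queer-classes}. Condition (ii) is a clean commutation check: $d_0$ only toggles the mark on the entry of absolute value $2$, whereas for $i>3$ the involution $d_i$ consults and modifies only entries of absolute value $i-1,i,i+1\geq 3$, so the two involutions act on disjoint cells and inspect disjoint data. Condition (iii) reduces via the same sub-tableau correspondence to Corollary~\ref{cor:two-edge} applied to $S^{*},T^{*}\in\SST(\gamma)$: the sequence of involutions the corollary supplies, together with its bound on occurrences of the top label, lifts verbatim to act on the ambient $S$ and $T$.

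The principal technical obstacle is the descent-compatibility claim $\Des(U)\cap[i]=\Des(U^{*})$ underpinning condition (i). It amounts to showing that the subsequence of $\hook(U)$ on letters of absolute value at most $i+1$ coincides with $\hook(U^{*})$. Because the hook reading rule sweeps columns and rows from largest to smallest index, first enumerating marked entries of the column from bottom to top and then the entries of the row from left to right while skipping marked entries already consumed, the relative order in which small-valued letters appear in $\hook(U)$ depends only on their cells, and is unaffected by the insertion of larger-valued letters between them. Carefully tracing this through each column-row pass, treating separately the marked entries consumed in the column sweep and the unmarked contributions of the row sweep, will give the identity and thereby complete the verification.
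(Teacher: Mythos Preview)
Your proposal is correct and follows essentially the same route as the paper: restrict to the sub-tableau on entries of absolute value at most $i+1$, invoke Theorem~\ref{thm:shape} (via Eq.~\eqref{e:queer-classes}) for condition~(i), use disjointness of the affected cells for condition~(ii), and quote Corollary~\ref{cor:two-edge} for condition~(iii). The one place you go further than the paper is in isolating the descent-compatibility step $\Des(U)\cap[i]=\Des(\hook(U^{*}))$ and sketching why the hook reading word of the restriction is the subsequence of $\hook(U)$ on small letters; the paper leaves this implicit, so your added care is warranted rather than superfluous.
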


\begin{proof}
  Given any signed standard tableau $S$, if we restrict $S$ to entries up to $i+1$, for $i\geq 3$, then the result is again a signed standard tableau, now of size $i+1$. Therefore by Theorem~\ref{thm:shape}, the generating function of the restricted equivalence class will be $P_{\gamma}$ for some $\gamma$ of size $i+1$, proving condition (i) of Definition~\ref{def:queer-deg}.

  For condition (ii), observe that the action of $d_i$ is local to entries with absolute value $i-1,i,i+1$, and when $i>3$ these entries do not include $2$. Since $d_0$ acts only on the $2$, it follows that $d_0$ and $d_i$ commute for $i>3$. 

  Finally, condition (iii) is precisely Corollary~\ref{cor:two-edge}.
\end{proof}

By Theorem~\ref{thm:good-def}, for $\gamma$ a strict partition of size $n$, we may define the \newword{standard queer dual equivalence graph} $\mathcal{H}_{\gamma}$ to be the graph on signed standard tableaux of shape $\gamma$ with an $i$-colored edge between $S$ and $d_i(S)$, for $i=0,2,\ldots,n-1$. Then the standard queer dual equivalence graphs are pairwise non-isomorphic and have no nontrivial automorphisms.

\begin{proposition}
  For strict partitions $\gamma,\delta$ of $n$, if $\theta: \mathcal{H}_{\gamma} \rightarrow \mathcal{H}_{\delta}$ is a bijection satisfying $\Des(\theta(S)) = \Des(S)$ and $d_i(\theta(S)) = \theta(d_i(S))$ for all $i=0,2,\ldots,n-1$, then $\delta=\gamma$ and $\theta$ is the identity map.
  \label{prop:noauto-noniso}
\end{proposition}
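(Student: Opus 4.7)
The plan is to combine three ingredients: that $\theta$ preserves $\Des$, the linear independence of the Schur $P$-functions, and the uniqueness (noted right after Theorem~\ref{thm:shape}) of the canonical tableau $S_\gamma$ as the element of $\SST(\gamma)$ with descent set $D_\gamma := \{\gamma_1, \gamma_1+\gamma_2, \ldots, \gamma_1+\cdots+\gamma_{\ell-1}\}$.

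First I would deduce that $\delta = \gamma$. Since $\theta$ is a bijection with $\Des(\theta(S)) = \Des(S)$, the multisets $\{\Des(S) : S \in \SST(\gamma)\}$ and $\{\Des(T) : T \in \SST(\delta)\}$ agree, so summing the corresponding fundamental quasisymmetric functions and applying Definition~\ref{def:schurP} gives $P_\gamma(X) = P_\delta(X)$. Because the Schur $P$-functions indexed by distinct strict partitions are linearly independent (which also follows from Theorem~\ref{thm:P_pos} together with unitriangularity of the $P$-to-$s$ expansion in dominance order), this forces $\gamma = \delta$.

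Next I would pin $\theta$ at the canonical representative $S_\gamma$. Since $\theta(S_\gamma)$ lies in $\mathcal{H}_\delta = \mathcal{H}_\gamma$ and has $\Des(\theta(S_\gamma)) = \Des(S_\gamma) = D_\gamma$, the uniqueness statement recorded after Theorem~\ref{thm:shape} forces $\theta(S_\gamma) = S_\gamma$.

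Finally I would propagate this equality to all of $\mathcal{H}_\gamma$ using connectivity. By Theorem~\ref{thm:shape} (and its inductive proof), every $T \in \SST(\gamma)$ can be written as $T = d_{i_k} \cdots d_{i_1}(S_\gamma)$ for some sequence of indices $i_j \in \{0, 2, 3, \ldots, n-1\}$. Since $\theta$ commutes with each $d_i$, applying $\theta$ and using $\theta(S_\gamma) = S_\gamma$ gives $\theta(T) = d_{i_k} \cdots d_{i_1}(\theta(S_\gamma)) = T$, so $\theta$ is the identity map. The only ingredient requiring real attention is the uniqueness of $S_\gamma$ by its descent set; since this is already asserted in the paper, the argument is essentially immediate from results in hand.
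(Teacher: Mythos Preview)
Your argument is correct and follows the same overall structure as the paper's proof: anchor at the canonical tableau $S_\gamma$ using its distinguished descent set, then propagate the fixed-point property to all of $\mathcal{H}_\gamma$ via the intertwining hypothesis and the connectivity of Theorem~\ref{thm:shape}. The one point of departure is how you obtain $\gamma=\delta$: the paper argues directly that any tableau of shape $\delta$ with descent set $D_\gamma$ must satisfy $\gamma\leq\delta$ in dominance order (and symmetrically $\delta\leq\gamma$ via $\theta^{-1}(S_\delta)$), which is self-contained and simultaneously forces $\theta(S_\gamma)=S_\gamma$; your route through $P_\gamma=P_\delta$ and linear independence of the Schur $P$-functions is equally valid but imports an external algebraic fact where the paper's argument stays purely combinatorial.
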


\begin{proof}
  Let $S_{\gamma}$ be the tableau obtained by filling the numbers 1 through $n$ into the rows of $\gamma$ from left to right, bottom to top, in which case $\Des(S_{\gamma}) = \{\gamma_1, \gamma_1 + \gamma_2, \ldots, \gamma_1+\cdots+\gamma_{\ell-1}\}$. For any standard tableau $T$ such that $\Des(S) = \Des(S_{\gamma})$, the numbers $1$ through $\gamma_1$, and also $\gamma_1 + 1$ through $\gamma_1 + \gamma_2$, and so on, must form horizontal strips. In particular, if $\Des(S) = \Des(S_{\gamma})$ for some $S$ of shape $\delta$, then $\gamma \leq \delta$ with equality if and only if $S = S_{\gamma}$.

  Suppose $\phi : \mathcal{H}_{\gamma} \rightarrow \mathcal{H}_{\delta}$ is a bijection preserving descent sets. Let $S_{\gamma}$ be as above for $\gamma$, and let $S_{\delta}$ be the corresponding tableau for $\delta$. Since $\Des(\phi(S_{\gamma})) = \Des(S_{\gamma})$,  we have $\gamma \leq \delta$. Conversely, since $\Des(\phi^{-1}(S_{\delta})) = \Des(S_{\delta})$, we have $\delta \leq \gamma$.  Therefore $\gamma=\delta$. Furthermore, $\phi(S_{\gamma}) = S_{\gamma}$. For $S \in \SST(\gamma)$ such that $d_i(S_{\gamma}) = S$ in $\mathcal{H}_{\gamma}$, we have $d_i(S_{\gamma}) = \phi(S)$ in $\mathcal{H}_{\delta}$, so $\phi(S) = S$. Extending this, every tableau connected to a fixed point by some sequence of involutions is also a fixed point for $\phi$, hence $\phi = \mathrm{id}$ on $\mathcal{H}_{\gamma}$ by Theorem~\ref{thm:shape}.
\end{proof}

Strengthening the uniqueness asserted by Proposition~\ref{prop:noauto-noniso}, the next three lemmas show that there is a unique queer dual equivalence that extends standard dual equivalence on signed standard tableaux of small size.

\begin{lemma}
  There exists a unique involution on $\SST(2)$ and a unique involution on $\SYT(2)\cup\SYT(1,1)$ that is a queer dual equivalence.
  \label{lem:unique-2}
\end{lemma}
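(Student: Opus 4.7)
The plan is to reduce both statements to an elementary enumeration, using Remark~\ref{rmk:fpf} to pin down the queer involution $\psi_0$, and to observe that since $n=2$ there are no other $\psi_i$ to specify (the range $1<i<n$ is empty).

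First I would enumerate the two sets explicitly. The set $\SST(2)$ consists of $\tableau{1 & 2}$ with descent set $\emptyset$ and $\tableau{1 & \st{2}}$ with descent set $\{1\}$. The set $\SYT(2)\cup\SYT(1,1)$ consists of $\tableau{1 & 2}$ with descent set $\emptyset$ and $\tableau{2 \\ 1}$ with descent set $\{1\}$. In particular, each of these sets has exactly two elements whose descent sets partition $\{\emptyset,\{1\}\}$.

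Next I would apply condition (i) of Definition~\ref{def:queer-deg} with $i=1$. The only strict partition of $2$ is $(2)$, and a direct computation gives $P_{(2)} = F_{\emptyset}+F_{\{1\}}$. Thus the generating function of each queer dual equivalence class $[S]_{\leq 1}$ must equal $P_{(2)}$, which has exactly two terms with distinct descent sets. Since in each of our two sets there are only two elements total, and they already have distinct descent sets covering $\emptyset$ and $\{1\}$, the class $[S]_{\leq 1}$ must comprise the entire set. As observed in Remark~\ref{rmk:fpf}, this also forces $\psi_0$ to be fixed-point-free and to exchange descent membership at $1$.

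On a two-element set, the unique fixed-point-free involution is the one swapping the two elements, so $\psi_0$ is uniquely determined in each case: on $\SST(2)$ it swaps $\tableau{1 & 2} \leftrightarrow \tableau{1 & \st{2}}$, and on $\SYT(2)\cup\SYT(1,1)$ it swaps $\tableau{1 & 2} \leftrightarrow \tableau{2 \\ 1}$. Finally I would check that conditions (ii) and (iii) hold vacuously, since there are no involutions $\psi_i$ with $i>3$ (condition (ii)), and condition (iii) for $k<n=2$ reduces to $k=1$, which is already handled by the single application of $\psi_0$. There is no real obstacle here: the only thing to guard against is forgetting that the dual equivalence involutions $\psi_i$ for $1<i<n$ are genuinely absent when $n=2$, so that uniqueness of the whole queer dual equivalence structure really does reduce to uniqueness of $\psi_0$.
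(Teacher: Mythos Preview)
Your proof is correct and follows essentially the same approach as the paper: both arguments reduce to the observation that each set has exactly two elements and then invoke Remark~\ref{rmk:fpf} to force $\psi_0$ to be the unique nontrivial (fixed-point-free) involution. You are slightly more thorough than the paper in explicitly verifying that conditions (ii) and (iii) of Definition~\ref{def:queer-deg} hold vacuously for $n=2$, but the core idea is identical.
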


\begin{proof}
  There are two signed standard tableaux of size $2$, and so since $\psi_0$ has no fixed points, by Remark~\ref{rmk:fpf}, the involution on these tableaux is the unique nontrivial one, shown in Fig.~\ref{fig:SST2}. Similarly, there is one standard Young tableaux of shape $(2)$ and one of shape $(1,1)$, and together their generating functions give $P_{(2)}$. Thus the involution in this case must connect the two tableaux, again shown in Fig.~\ref{fig:SST2},
\end{proof}

  \begin{figure}[ht]
    \begin{center}
      \begin{tikzpicture}[xscale=1.6,
          label/.style={%
            postaction={ decorate,
              decoration={ markings, mark=at position 0.5 with \node #1;}}}]
        \node at (1,1) (S1) {$\tableau{1 & 2}$};
        \node at (2,1) (S2) {$\tableau{1 & \st{2}}$};
        \draw[thick,color=violet,label={[above]{$d_{0}$}}] (S1) -- (S2) ;
        \node at (4,1) (T1) {$\tableau{1 & 2}$};
        \node at (5,1) (T2) {$\tableau{2 \\ 1}$};
        \draw[thick,color=violet,label={[above]{$d_{0}$}}] (T1) -- (T2) ;
      \end{tikzpicture}
      \caption{\label{fig:SST2}The unique queer dual equivalences for $\SST(2)$ and for $\SYT(2)\cup\SYT(1,1)$.}
    \end{center}
  \end{figure}

\begin{lemma}
  Let $\gamma$ be a strict partition of $3$, and let $\Delta$ be the set of partitions defined by $P_{\gamma} = \sum_{\lambda \in \Delta} s_{\lambda}$. Then there exists a unique involution $\psi_0$ on $\bigsqcup_{\lambda\in\Delta} \SYT(\lambda)$ such that $\psi_0,d_2$ is a queer dual equivalence, where $d_2$ is the standard dual equivalence involution on standard Young tableaux.
  \label{lem:unique-3}
\end{lemma}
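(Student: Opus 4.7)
The strict partitions of $3$ are $(3)$ and $(2,1)$, and the plan is to handle these two cases separately and reduce each to a small finite check.

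For $\gamma=(2,1)$, I would first compute $P_{(2,1)} = F_{\{1\}} + F_{\{2\}} = s_{(2,1)}$ directly from Definition~\ref{def:schurP}, so that $\Delta = \{(2,1)\}$ and there are exactly two standard Young tableaux, with descent sets $\{1\}$ and $\{2\}$. By Remark~\ref{rmk:fpf}, $\psi_0$ must be fixed-point-free, which forces it to be the transposition of these two tableaux. Paired with $d_2$ (which also swaps the same pair), the axioms of Definition~\ref{def:queer-deg} follow directly: condition (ii) is vacuous, condition (iii) is trivial, and condition (i) at $i=1,2$ reduces to the identities $F_\emptyset + F_{\{1\}} = P_{(2)}$ and $F_{\{1\}} + F_{\{2\}} = P_{(2,1)}$.

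For $\gamma=(3)$, the analogous computation gives $P_{(3)} = F_\emptyset + F_{\{1\}} + F_{\{2\}} + F_{\{1,2\}} = s_{(3)} + s_{(2,1)} + s_{(1,1,1)}$, so $\Delta=\{(3),(2,1),(1,1,1)\}$ and there are exactly four standard Young tableaux, one per subset $D \subseteq [2]$ as descent set; denote them $T_D$. Haiman's rule shows that $d_2$ fixes $T_\emptyset$ and $T_{\{1,2\}}$ and swaps $T_{\{1\}} \leftrightarrow T_{\{2\}}$. By Remark~\ref{rmk:fpf}, $\psi_0$ is fixed-point-free and must toggle membership of $1$ in the descent set, leaving only two candidate matchings: $(A)$ $T_\emptyset \leftrightarrow T_{\{1\}}$ and $T_{\{2\}} \leftrightarrow T_{\{1,2\}}$, or $(B)$ $T_\emptyset \leftrightarrow T_{\{1,2\}}$ and $T_{\{2\}} \leftrightarrow T_{\{1\}}$.

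To eliminate $(B)$, I would apply condition~(i) at $i=2$, which requires every $[S]_{\leq 2}$-class to have generating function $P_\gamma$ for some strict partition $\gamma$ of $3$, hence $P_{(3)}$ or $P_{(2,1)} = F_{\{1\}} + F_{\{2\}}$. Under $(A)$, the $\psi_0$- and $d_2$-edges combine into a single chain $T_\emptyset - T_{\{1\}} - T_{\{2\}} - T_{\{1,2\}}$ whose generating function is precisely $P_{(3)}$, as required. Under $(B)$, both $\psi_0$ and $d_2$ connect $T_{\{1\}}$ with $T_{\{2\}}$, leaving $\{T_\emptyset, T_{\{1,2\}}\}$ as a separate $[S]_{\leq 2}$-class whose generating function is $F_\emptyset + F_{\{1,2\}}$, matching neither $P_{(3)}$ nor $P_{(2,1)}$; this rules out $(B)$. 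A final direct check confirms that $(A)$ also satisfies conditions (ii) and (iii), establishing existence and uniqueness. The main obstacle is not any single deep step but rather the careful enumeration of the candidate matchings; Remark~\ref{rmk:fpf} narrows everything down to a two-element case analysis, and the small size of the $P_\gamma$ expansions for $|\gamma|=3$ makes the final generating-function check immediate.
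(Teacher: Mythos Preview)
Your proposal is correct and follows essentially the same approach as the paper: split into the cases $\gamma=(2,1)$ and $\gamma=(3)$, use Remark~\ref{rmk:fpf} to reduce the latter to two candidate matchings, and eliminate the bad one via condition~(i) of Definition~\ref{def:queer-deg}. Your elimination of case~(B) is in fact slightly sharper than the paper's phrasing, correctly pinpointing $\{T_\emptyset,T_{\{1,2\}}\}$ (with generating function $F_\emptyset+F_{\{1,2\}}$) as the class that fails to be a Schur $P$-function.
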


\begin{proof}
  Heeding Remark~\ref{rmk:fpf}, $\psi_0$ has no fixed points and toggles $1$ in and out of the decent set. For $\gamma=(2,1)$, we have $P_{(2,1)} = s_{(2,1)}$, so $\psi_0$ acts nontrivially to connect the two standard Young tableaux of shape $(2,1)$, as shown in Fig.~\ref{fig:SST3} (left).

  For $\gamma=(3)$, we have $P_{(3)} = s_{(3)} + s_{(2,1)} + s_{(1,1,1)}$, so we consider possible queer dual equivalences on $\SYT(3) \cup \SYT(2,1) \cup \SYT(1,1,1)$. The two possible pairings for $\psi_0$ that toggle $1$ from the descent set are the one shown in Fig.~\ref{fig:SST3} (right), and the one connecting the two tableaux of shape $(2,1)$ and then connecting the two remaining tableaux. The latter gives two disjoint queer dual equivalence classes, neither of which has generating function Schur $P$-positive, so the former holds.
\end{proof}

  \begin{figure}[ht]
    \begin{center}
      \begin{tikzpicture}[xscale=1.6,
          label/.style={%
            postaction={ decorate,
              decoration={ markings, mark=at position 0.5 with \node #1;}}}]
        \node at (1,1) (T1) {$\tableau{1 & 2 & 3}$};
        \node at (2,1) (T2) {$\tableau{2 \\ 1 & 3}$};
        \node at (3,1) (T3) {$\tableau{3 \\ 1 & 2}$};
        \node at (4,1) (T4) {$\tableau{3 \\ 2 \\ 1}$};
        \node at (-2,1) (S2) {$\tableau{2 \\ 1 & 3}$};
        \node at (-1,1) (S3) {$\tableau{3 \\ 1 & 2}$};
        \draw[thick,color=red   ,label={[above]{$d_{2}$}}](T2) -- (T3) ;
        \draw[thick,color=violet,label={[above]{$\psi_{0}$}}] (T1) -- (T2) ;
        \draw[thick,color=violet,label={[above]{$\psi_{0}$}}] (T3) -- (T4) ;
        \draw[thick,color=red   ,label={[above]{$d_{2}$}}](S2.05) -- (S3.175) ;
        \draw[thick,color=violet,label={[below]{$\psi_{0}$}}] (S2.355) -- (S3.185) ;
      \end{tikzpicture}
      \caption{\label{fig:SST3}The unique queer dual equivalence for $\SYT(2,1)$ and for $\SYT(3) \cup \SYT(2,1) \cup \SYT(1,1,1)$.}
    \end{center}
  \end{figure}

\begin{lemma}
  Let $\gamma$ be a strict partition of $4$, and let $\Delta$ be the set of partitions defined by $P_{\gamma} = \sum_{\lambda \in \Delta} s_{\lambda}$. Then there exists a unique involution $\psi_0$ on $\bigsqcup_{\lambda\in\Delta} \SYT(\lambda)$ such that $\psi_0,d_2,d_3$ is a queer dual equivalence, where $d_2, d_3$ are the standard dual equivalence involutions on standard Young tableaux.
  \label{lem:unique-4}
\end{lemma}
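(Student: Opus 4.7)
The plan is to treat the two strict partitions of $4$, namely $\gamma=(4)$ and $\gamma=(3,1)$, by the same enumeration strategy as in the proof of Lemma~\ref{lem:unique-3}. In each case $|\bigsqcup_{\lambda\in\Delta}\SYT(\lambda)|=8$ (computed from $P_{(4)}=s_{(4)}+s_{(3,1)}+s_{(2,1,1)}+s_{(1,1,1,1)}$ and $P_{(3,1)}=s_{(3,1)}+s_{(2,2)}+s_{(2,1,1)}$), and by Remark~\ref{rmk:fpf} any admissible $\psi_0$ is a fixed-point-free involution pairing tableaux by toggling whether $1\in\Des$, leaving only finitely many candidate pairings.

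The core of the argument is a two-step elimination using Definition~\ref{def:queer-deg}(i). First, I would list the $d_2$-orbits on $\bigsqcup_{\lambda\in\Delta}\SYT(\lambda)$ together with the distribution of $\Des(U)\cap[2]$, and enumerate the $\{\psi_0,d_2\}$-orbit partitions whose descent generating functions (summing $F_{\Des(U)\cap[2]}$) equal $P_{(3)}$ or $P_{(2,1)}$. Restricting any size-$4$ such orbit to entries $\leq 3$ yields a copy of $\SYT((3))\cup\SYT((2,1))\cup\SYT((1,1,1))$, so Lemma~\ref{lem:unique-3} uniquely determines the $\psi_0$-pairings inside that orbit; size-$2$ orbits are handled analogously by the $\gamma'=(2,1)$ part of that lemma. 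Second, using the known $d_3$-action on standard Young tableaux, I would determine how $d_3$ glues the $\{\psi_0,d_2\}$-orbits and keep only those candidates for which each $\{\psi_0,d_2,d_3\}$-orbit has generating function equal to a single $P_\delta$ with $|\delta|=4$, namely $P_{(4)}$ or $P_{(3,1)}$. The unique survivor coincides with the $\psi_0$ transported from the $d_0$-involution on $\SST(\gamma)$ through the descent-preserving bijection with $\bigsqcup_{\lambda\in\Delta}\SYT(\lambda)$, so that existence and uniqueness follow in one stroke.

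The main obstacle will be the bookkeeping in the $\gamma=(4)$ case, where several $\{\psi_0,d_2\}$-orbit partitions survive the $i=2$ stage; pinning down exactly one requires tracing the $d_3$-action between those orbits carefully and discarding every candidate whose $d_3$-connected components fail to be Schur $P$-positive individually. The $\gamma=(3,1)$ case is combinatorially smaller but proceeds by the same template, and the two sub-lemmas (for size $2$ and size $3$) are applied recursively inside the size-$4$ analysis to pin down the $\psi_0$-pairings within each restricted orbit before the global $d_3$-test is applied.
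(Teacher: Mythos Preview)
Your proposal is correct and follows essentially the same approach as the paper: a finite case enumeration that uses Remark~\ref{rmk:fpf} to restrict the candidate pairings, then applies condition~(i) of Definition~\ref{def:queer-deg} at $i=2$ (via Lemma~\ref{lem:unique-3}) and at $i=3$ to eliminate all but one. The paper organizes the casework by tracing the possible images of a few specific tableaux (for instance, the three possibilities for $\psi_0(4123)$) rather than by $d_2$-orbits, but the eliminating contradictions---restricted classes with generating function $2P_{(2,1)}$, or disconnected $\{\psi_0,d_2,d_3\}$-components that are not individually a single $P_\delta$---are the same as yours.
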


\begin{proof}
  For the discussion to follow, we identify each tableau with its row reading word. Consider first $\gamma=(3,1)$, for which we have $P_{(3,1)} = s_{(3,1)} + s_{(2,2)} + s_{(2,1,1)}$, so we wish to construct a queer dual equivalence on
  \[ \SYT(3,1) \cup \SYT(2,2) \cup \SYT(2,1,1). \]
  By Lemma~\ref{lem:unique-3}, restricting to entries $1,2,3$ gives one of the two cases depicted in Fig.~\ref{fig:SST3}, giving three possibilities for $\psi_0$ acting on $4123$: $2413$, $2134$, or $4213$.
    
  Suppose $\psi_0(4123)=2134$, as depicted in Fig.~\ref{fig:SST4-bad}. Then restricting to entries $1,2,3$, we must have a component of the form in the right side of Fig.~\ref{fig:SST3}, and so we must have $\psi_0(3124)=3214$ as shown. Now there are two possibilities for $\psi_0(4213)$: either $4312$ or $3412$. The former results in two separate queer equivalence classes, so we must have the latter, which also forces $\psi_0(4312)=2413$, as shown with dashed lines in Fig.~\ref{fig:SST4-bad}. Restricting as in Definition~\ref{def:queer-deg} condition (i) for $i=2$, the queer dual equivalence class containing these last four tableaux has generating function $2 P_{(2,1)}$, contradicting that it is a queer dual equivalence. 

  \begin{figure}[ht]
  \begin{center}
    \begin{tikzpicture}[scale=1.6,
        label/.style={%
          postaction={ decorate,
            decoration={ markings, mark=at position 0.5 with \node #1;}}}]
      \node (A1) at (0,1) {$\tableau{2 \\ 1 & 3 & 4}$};
      \node (A2) at (1,1) {$\tableau{3 \\ 1 & 2 & 4}$};
      \node (A3) at (2,1) {$\tableau{4 \\ 1 & 2 & 3}$};
      \node (B1) at (3,1) {$\tableau{2 & 4 \\ 1 & 3}$};
      \node (B2) at (4,1) {$\tableau{3 & 4 \\ 1 & 2}$};
      \node (C1) at (5,1) {$\tableau{3 \\ 2 \\ 1 & 4}$};
      \node (C2) at (6,1) {$\tableau{4 \\ 2 \\ 1 & 3}$};
      \node (C3) at (7,1) {$\tableau{4 \\ 3 \\ 1 & 2}$};
      \draw[thick,color=red   ,label={[above]{$d_{2}$}}](A1) -- (A2) ;
      \draw[thick,color=blue,label={[above]{$d_{3}$}}]  (A2) -- (A3) ;
      \draw[thick,color=red   ,label={[above]{$d_{2}$}}](B1.05) -- (B2.175) ;
      \draw[thick,color=blue,label={[below]{$d_{3}$}}]  (B1.355) -- (B2.185) ;
      \draw[thick,color=blue,label={[above]{$d_{3}$}}]  (C1) -- (C2) ;
      \draw[thick,color=red   ,label={[above]{$d_{2}$}}](C2) -- (C3) ;
      \draw[label={[above]{$\psi_{0}$}},thick,color=violet](A1) to[out=90,in=90, distance=\cellsize] (A3) ;
      \draw[label={[above]{$\psi_{0}$}},thick,color=violet](A2) to[out=-90,in=-90, distance=\cellsize] (C1) ;
      \draw[label={[above]{$\psi_{0}$}},thick,dashed,color=violet](C2) to[out=90,in=90, distance=0.5\cellsize] (B2) ;
      \draw[label={[above]{$\psi_{0}$}},thick,dashed,color=violet](C3) to[out=90,in=90, distance=1.5\cellsize] (B1) ;
    \end{tikzpicture}
    \caption{\label{fig:SST4-bad}A failed construction for a queer dual equivalence for $\SYT(3,1) \cup \SYT(2,2) \cup \SYT(2,1,1)$.}
  \end{center}
\end{figure}

  Similarly, if $\psi_0(4123)=4213$, then restricting to entries $1,2,3$, we must have a component of the form in the right side of Fig.~\ref{fig:SST3}, and so we must have $\psi_0(4312)=3214$. This case can be realized by conjugating each of the tableaux in Fig.~\ref{fig:SST4-bad}, and the same analysis reaches the same contradiction. Therefore we must have $\psi_0(4123)=2413$ as desired.

  Consider next the three possibilities for $\psi_0$ applied to $3214$: $3412$ (desired result), $4312$, or $3124$. The analysis for the three possibilities for $\psi_0(3214)$, namely $3412$, $4312$, or $3124$, is identical to the case for $\psi_0(4123)$ after the tableaux are conjugated, and so we conclude that we must have $\psi_0(3214) = 3412$, and we arrive at the situation depicted in Fig.~\ref{fig:SST4-good}. This leaves two possibilities for the remaining four tableaux, and the one not shown by dashed lines in Fig.~\ref{fig:SST4-good} again results in a restricted queer dual equivalence class with generating function $2 P_{(2,1)}$, leaving only the shown dashed lines as a possibility. Thus Fig.~\ref{fig:SST4-good} shows the unique queer dual equivalence for $\SYT(3,1) \cup \SYT(2,2) \cup \SYT(2,1,1)$.
  
  \begin{figure}[ht]
  \begin{center}
    \begin{tikzpicture}[scale=1.6,
        label/.style={%
          postaction={ decorate,
            decoration={ markings, mark=at position 0.5 with \node #1;}}}]
      \node (A1) at (0,1) {$\tableau{2 \\ 1 & 3 & 4}$};
      \node (A2) at (1,1) {$\tableau{3 \\ 1 & 2 & 4}$};
      \node (A3) at (2,1) {$\tableau{4 \\ 1 & 2 & 3}$};
      \node (B1) at (3,1) {$\tableau{2 & 4 \\ 1 & 3}$};
      \node (B2) at (4,1) {$\tableau{3 & 4 \\ 1 & 2}$};
      \node (C1) at (5,1) {$\tableau{3 \\ 2 \\ 1 & 4}$};
      \node (C2) at (6,1) {$\tableau{4 \\ 2 \\ 1 & 3}$};
      \node (C3) at (7,1) {$\tableau{4 \\ 3 \\ 1 & 2}$};
      \draw[thick,color=red   ,label={[above]{$d_{2}$}}](A1) -- (A2) ;
      \draw[thick,color=blue,label={[above]{$d_{3}$}}]  (A2) -- (A3) ;
      \draw[thick,color=red   ,label={[above]{$d_{2}$}}](B1.05) -- (B2.175) ;
      \draw[thick,color=blue,label={[below]{$d_{3}$}}]  (B1.355) -- (B2.185) ;
      \draw[thick,color=blue,label={[above]{$d_{3}$}}]  (C1) -- (C2) ;
      \draw[thick,color=red   ,label={[above]{$d_{2}$}}](C2) -- (C3) ;
      \draw[label={[above]{$\psi_{0}$}},thick,dashed,color=violet](A1) to[out=90,in=90, distance=\cellsize] (A2) ;
      \draw[label={[above]{$\psi_{0}$}},thick,dashed,color=violet](C2) to[out=90,in=90, distance=\cellsize] (C3) ;
      \draw[label={[above]{$\psi_{0}$}},thick,color=violet](A3) to[out=90,in=90, distance=\cellsize] (B1) ;
      \draw[label={[above]{$\psi_{0}$}},thick,color=violet](C1) to[out=90,in=90, distance=\cellsize] (B2) ;
    \end{tikzpicture}
    \caption{\label{fig:SST4-good}Constructing the unique queer dual equivalence for $\SYT(3,1) \cup \SYT(2,2) \cup \SYT(2,1,1)$.}
  \end{center}
\end{figure}

  The case of $\gamma = (4)$ has
  \[ P_{(4)}  = s_{(4)} + s_{(3,1)} + s_{(2,1,1)} + s_{(1,1,1,1)}, \]
  and so we consider possible queer dual equivalences for
  \[ \SYT(4) \cup \SYT(3,1) \cup \SYT(2,1,1) \cup \SYT(1,1,1,1). \]
  This case is handled by a parallel analysis. We omit the details but include the unique structure in Fig.~\ref{fig:SST4-good-2}, with solid lines corresponding to the initial cases to deduce and dashed lines corresponding to second cases to deduce, parallel to the argument for $\gamma=(3,1)$.
\end{proof}

\begin{figure}[ht]
  \begin{center}
    \begin{tikzpicture}[scale=1.6,
        label/.style={%
          postaction={ decorate,
            decoration={ markings, mark=at position 0.5 with \node #1;}}}]
      \node at (1,1) (T1) {$\tableau{1 & 2 & 3 & 4}$};
      \node at (2,1) (T2) {$\tableau{2 \\ 1 & 3 & 4}$};
      \node at (3,1) (T3) {$\tableau{3 \\ 1 & 2 & 4}$};
      \node at (4,1) (T4) {$\tableau{4 \\ 1 & 2 & 3}$};
      \node at (5,1) (B3) {$\tableau{3 \\ 2 \\ 1 & 4}$};
      \node at (6,1) (B4) {$\tableau{4 \\ 2 \\ 1 & 3}$};
      \node at (7,1) (B5) {$\tableau{4 \\ 3 \\ 1 & 2}$};
      \node at (8,1) (B6) {$\tableau{4 \\ 3 \\ 2 \\ 1}$};
      \draw[thick,color=red   ,label={[above]{$d_{2}$}}](T2) -- (T3) ;
      \draw[thick,color=blue,label={[above]{$d_{3}$}}]  (T3) -- (T4) ;
      \draw[thick,color=blue,label={[above]{$d_{3}$}}]  (B3) -- (B4) ;
      \draw[thick,color=red   ,label={[above]{$d_{2}$}}](B4) -- (B5) ;
      \draw[label={[above]{$\psi_{0}$}},thick,color=violet] (T1) to[out=90,in=90, distance=\cellsize] (T2) ;
      \draw[label={[above]{$\psi_{0}$}},thick,color=violet,dashed] (T3) to[out=90,in=90,distance=\cellsize] (B3) ;
      \draw[label={[above]{$\psi_{0}$}},thick,color=violet,dashed] (T4) to[out=-90,in=-90,distance=\cellsize] (B4) ;
      \draw[label={[above]{$\psi_{0}$}},thick,color=violet] (B5) to[out=90,in=90,distance=\cellsize] (B6) ;
    \end{tikzpicture}
    \caption{\label{fig:SST4-good-2}Constructing the unique queer dual equivalence for $\SYT(4) \cup \SYT(3,1) \cup \SYT(2,1,1) \cup \SYT(1,1,1,1)$.}
  \end{center}
\end{figure}

The following begins to show the restrictiveness of queer dual equivalence.

\begin{lemma}
  Let $\{\psi_i\}$ be a queer dual equivalence for $\mathcal{A}$ with descent map $\Des$. For $T \in \mathcal{A}$ and $i\geq 3$, we have $i\in\Des(T)$ if and only if $i\in\Des(\psi_0(T))$.
  \label{lem:des0}
\end{lemma}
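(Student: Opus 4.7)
The plan is strong induction on $i \geq 3$: the base case $i = 3$ is extracted from the uniqueness of the queer involution on size-$4$ shapes (Lemma~\ref{lem:unique-4}), and the inductive step uses the commutation relation from condition~(ii) of Definition~\ref{def:queer-deg} together with the locality of standard dual equivalence.

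For the base case, fix $T \in \mathcal{A}$ and form the subclass $C = [T]_{\leq 3}$ generated by $\psi_0, \psi_2, \psi_3$. Condition~(i) of Definition~\ref{def:queer-deg} gives $\sum_{U \in C} F_{\Des(U) \cap [3]}(X) = P_{\gamma}(X)$ for some strict partition $\gamma$ of $4$, and for each such $\gamma$ the expansion of $P_{\gamma}$ in the Schur basis is multiplicity free (as recorded in Lemma~\ref{lem:unique-4}). Theorem~\ref{thm:deg} applied to the dual equivalence $\{\psi_2, \psi_3\}$ on $C$ then realizes each $\{\psi_2, \psi_3\}$-equivalence class of $C$ as $\SYT(\lambda)$ via a descent-preserving isomorphism, with each Schur component of $P_{\gamma}$ appearing exactly once. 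By Lemma~\ref{lem:unique-4}, the queer involution on $\bigsqcup_{\lambda}\SYT(\lambda)$ completing this structure to a queer dual equivalence is unique, and Theorem~\ref{thm:good-def} supplies it explicitly via the action $d_0$ on $\SST(\gamma)$ that toggles the marking on the entry of absolute value~$2$. Since $d_0$ does not alter the positions of entries $3$ and $4$ in the hook reading word, it preserves descent at position~$3$, and transferring back through the identification yields $3 \in \Des(T) \iff 3 \in \Des(\psi_0 T)$.

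For the inductive step with $i \geq 4$, assume the statement at positions $3, \ldots, i-1$. Two tools are in play: condition~(ii) gives $\psi_0 \psi_i = \psi_i \psi_0$, and Theorem~\ref{thm:deg} combined with Definition~\ref{def:deg} ensures that each $\psi_j$ (for $j \geq 2$) preserves descents at every position outside $\{j-1, j\}$. Using condition~(iii) with $k = i + 1$, write $\psi_0 T = \psi_{i_j} \cdots \psi_{i_1} T$ with each $i_\ell \in \{0, 2, 3, \ldots, i\}$ and at most two of the $i_\ell$ equal to $i$. Every factor other than the at most two $\psi_i$'s and any intermediate $\psi_0$'s preserves the status ``$i \in \Des$'' by locality, and each intermediate $\psi_0$ preserves ``$i' \in \Des$'' for $3 \leq i' < i$ by the inductive hypothesis. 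Combined with the commutation from~(ii), the change in descent at position~$i$ from $T$ to $\psi_0 T$ reduces to the net parity contributed by the at most two applications of $\psi_i$, and a shifted analog of the base-case uniqueness argument applied to the size-$4$ subclass generated by $\psi_0, \psi_{i-1}, \psi_i$ then forces this parity to be zero.

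The principal obstacle is controlling the intermediate occurrences of $\psi_0$ in the condition-(iii) expansion, since the inductive hypothesis gives no direct control over ``$i \in \Des$'' after such a step. I expect to resolve this via a normal-form reduction: using $\psi_0^2 = \mathrm{id}$ together with the commutation relations from~(ii), the sequence can be rewritten so that $\psi_0$ appears only as the terminal factor, after which the $\psi_i$-parity analysis above closes the argument.
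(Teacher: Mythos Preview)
Your base case is fine and matches the paper's. The inductive step, however, has a real gap, and the fix is much simpler than the machinery you are invoking.

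The normal-form reduction you propose cannot work: condition~(ii) gives $\psi_0\psi_j=\psi_j\psi_0$ only for $j>3$, so in any word containing $\psi_2$ or $\psi_3$ you cannot slide the $\psi_0$'s to the end. Your appeal to condition~(iii) is also empty here: since $\psi_0 T\in[T]_{\leq i+1}$ trivially, the sequence guaranteed by~(iii) can simply be the single letter $\psi_0$, which returns you to the statement you are trying to prove. Finally, there is no ``size-$4$ subclass generated by $\psi_0,\psi_{i-1},\psi_i$'' in Definition~\ref{def:queer-deg}; condition~(i) only constrains the bottom classes $[S]_{\leq 1},[S]_{\leq 2},[S]_{\leq 3}$.

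The paper's inductive step avoids condition~(iii) entirely. The point is that the single commutation $\psi_0\psi_i=\psi_i\psi_0$ (valid since $i\geq 4$) already forces $T$ and $\psi_0(T)$ to have the \emph{same} $\psi_i$-fixed-point status. Now use the local structure of the underlying dual equivalence (Definition~\ref{def:deg}(i) with $h=i$): $\psi_i(U)=U$ if and only if both or neither of $i-1,i$ lie in $\Des(U)$, and $\psi_i(U)\neq U$ if and only if exactly one of $i-1,i$ lies in $\Des(U)$. Thus $T$ and $\psi_0(T)$ obey the same ``both-or-neither'' versus ``exactly-one'' dichotomy on the pair $\{i-1,i\}$. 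The inductive hypothesis says they agree at position $i-1$; the dichotomy then forces agreement at position $i$. No words in the $\psi_j$'s, no condition~(iii), no parity bookkeeping.
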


\begin{proof}
  We proceed by induction on $i$. By condition (i) of Definition~\ref{def:queer-deg}, each restricted equivalence class for $\psi_0,\psi_2,\psi_3$ must be in descent-preserving bijection with signed standard tableaux. Therefore, by Lemma~\ref{lem:unique-4}, we have $3 \in \Des(T)$ if and only if $3 \in \Des(\psi_0(T))$, establishing the base case. 

  Assume the result for $3,4,\ldots,i-1$ and consider the action of $d_i$ on $T$, where now $i \geq 4$. If $T$ is a fixed point for $d_{i}$, then by the commutativity axiom for queer dual equivalence, we have $d_{i}(\psi_0(T)) = \psi_0(d_{i}(T)) = \psi_0(T)$, and so $\psi_0(T)$ is also a fixed point for $d_{i}$. This forces either both or neither $i-1,i$ in $\Des(T)$ and similarly for $\Des(\psi_0(T))$, so the result for $i-1$ forces the result for $i$. Alternatively, if $T$ is not a fixed point for $d_{i}$, then exactly one of $i-1,i$ is in $\Des(T)$, and $d_{i}$ toggles which is in and which is out, again the result for $i$ from the result for $i-1$. 
\end{proof}

We can now prove a stronger uniqueness result, namely that the queer dual equivalence involution in Definition~\ref{def:d_0} is the only involution that extends the dual equivalence involutions in Definition~\ref{def:deg_shifted} to an abstract queer dual equivalence.

\begin{theorem}
  For $\gamma$ a strict partition, the map $d_0$ of Definition~\ref{def:d_0} is the unique involution on $\SST(\gamma)$ for which $d_0$, $\{d_i\}_{1<i<n}$ define a queer dual equivalence involution, where $d_i$ for $i>1$ is given by Definition~\ref{def:deg_shifted}.  
\end{theorem}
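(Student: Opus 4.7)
The plan is to prove the statement by strong induction on $n = |\gamma|$, with the preceding lemmas furnishing the base case and condition (i) of Definition~\ref{def:queer-deg} reducing the inductive step to a restricted queer dual equivalence of size $4$.

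For the base cases $n \leq 4$, Lemma~\ref{lem:unique-2} covers $\SST((2))$ directly. For $n = 3, 4$ and the relevant strict partitions $\gamma$, the case analyses of Lemmas~\ref{lem:unique-3} and \ref{lem:unique-4} rest on three abstract ingredients: Remark~\ref{rmk:fpf} (fixed-point-freeness and toggling of descent at position $1$), the requirement that each small restricted equivalence class have a Schur $P$-function as its descent generating function, and the descent-matched dual equivalence structure under $\{d_i\}$. By Theorem~\ref{thm:strong_pos}, the dual equivalence $(\SST(\gamma), \{d_i\})$ has the same descent generating function $P_\gamma$ as $(\bigsqcup_{\lambda \in \Delta}\SYT(\lambda), \{d_i\})$, and in fact the two dual equivalence graphs are descent-isomorphic. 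Hence the same case-by-case analysis transfers verbatim to rule out every pairing of $\psi_0$ except $d_0$ directly on $\SST(\gamma)$ for $|\gamma| \leq 4$.

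For the inductive step with $n \geq 5$, fix $S \in \SST(\gamma)$ and consider the equivalence class $[S]_{\leq 3}$ generated by $\psi_0, d_2, d_3$. By condition (i) of Definition~\ref{def:queer-deg}, its restricted descent generating function equals $P_{\gamma^{(4)}}$ for some strict partition $\gamma^{(4)}$ of size $4$. The involutions $d_2$ and $d_3$ fix entries with absolute value at least $5$ by Definition~\ref{def:deg_shifted}; the crucial intermediate step is to show that $\psi_0$ likewise preserves the subtableau on entries $\geq 5$. Granted this, $[S]_{\leq 3}$ is canonically identified via restriction to $\{1,2,3,4\}$-entries with $\SST(\gamma^{(4)})$, on which the restricted triple $(\psi_0, d_2, d_3)$ forms a queer dual equivalence (the conditions (i)--(iii) restrict cleanly, since $d_2, d_3$ already act only on entries up to $4$). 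The base case applied to $\gamma^{(4)}$ then forces $\psi_0|_{[S]_{\leq 3}} = d_0$ under the identification, and in particular $\psi_0(S) = d_0(S)$, completing the induction.

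The main obstacle is the technical claim that $\psi_0$ preserves entries with absolute value at least $5$. The cleanest route I anticipate is to observe that the fiber of $\SST(\gamma)$ over a fixed configuration of entries $\geq 5$ already supports a full queer dual equivalence via $(d_0, d_2, d_3)$ by Theorem~\ref{thm:good-def}, whose restricted descent generating function is precisely $P_{\gamma^{(4)}}$; combined with Schur $P$-linear independence and cardinality comparison, this forces $[S]_{\leq 3}$ to coincide with this fiber, so $\psi_0$ cannot move entries $\geq 5$. An alternative, more direct route proceeds by downward induction on the maximum entry label $k \geq 5$ potentially moved by $\psi_0$: commutativity of $\psi_0$ with $d_k$ (axiom (ii)), together with Lemma~\ref{lem:des0} controlling descents at positions $\geq 3$, pins down the position and sign of entry $k$ in $\psi_0(S)$, and iterating down to $k = 5$ completes the argument.
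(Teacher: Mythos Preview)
Your overall architecture matches the paper's: induction on $n$, base cases $n\le 4$ via the uniqueness Lemmas~\ref{lem:unique-2}--\ref{lem:unique-4} transported to $\SST$, and an inductive step whose crux is that $\psi_0$ leaves large entries untouched. The divergence is in how that crux is handled, and here both of your proposed routes have gaps.

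Route~1 is circular. To ``force $[S]_{\leq 3}$ to coincide with this fiber'' by cardinality or by Schur $P$-linear independence, you first need $[S]_{\leq 3}$ to be \emph{contained} in the fiber; but that containment is exactly the statement that $\psi_0$ preserves entries $\geq 5$. Without it, $[S]_{\leq 3}$ could meet several fibers, and knowing only that its restricted generating function is a single $P_{\gamma^{(4)}}$ does not prevent that: the intersection with each fiber is merely a union of $(d_2,d_3)$-classes, whose Schur expansion can perfectly well sum to a single $P$.

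Route~2 has the right ingredients (commutativity and Lemma~\ref{lem:des0}) but the wrong orientation. A downward induction starting at $k=n$ has no base: there is no $d_n$, and commutativity with $d_{n-1}$ alone cannot pin down entry $n$ unless you already know entries $n-2,n-1$ are fixed. The paper runs the argument in the opposite order. First it uses the strong inductive hypothesis on $|\gamma'|<n$ to conclude that entries $3,\ldots,n-1$ are preserved (by restricting to entries $\le k$ and invoking uniqueness on the smaller shape); this is where the hypothesis for $4<n'<n$ enters, a step your write-up never makes explicit. Only then is entry $n$ handled by a direct argument: since entries $3,\ldots,n-1$ are now fixed, the only freedom left is the sign on $n$; one finds the entry $k\ge 3$ immediately left of $n$, applies $d_{k+1},\ldots,d_{n-1}$ (all commuting with $\psi_0$) until $n-1$ sits adjacent to $n$, and then observes that toggling the sign on $n$ would flip whether $n-1\in\Des$, contradicting Lemma~\ref{lem:des0}. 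This last maneuver is the genuinely new idea in the inductive step, and neither of your routes supplies it.
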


\begin{proof}
  We proceed by induction on $n = |\gamma|$. For $n \leq 4$, there is a unique bijection between $\SST(\gamma)$ and $\bigsqcup_{\lambda\in\Delta} \SYT(\lambda)$, where $\Delta$ is the set of partitions defined by $P_{\gamma} = \sum_{\lambda \in \Delta} s_{\lambda}$, that intertwines the standard dual equivalences on $\SST$ and $\SYT$. Thus the base cases $n \leq 4$ are resolved by Lemmas~\ref{lem:unique-3} and \ref{lem:unique-4}. Assume then $n \geq 5$, and let $\psi_0$ be any involution that extends $\{d_i\}_{1<i<n}$ to a queer dual equivalence on $\SST(\gamma)$ for some strict partition $\gamma$ of $n$.

  By Lemma~\ref{lem:des0}, $\psi_0$ preserves descents at and above $3$. We claim the stronger statement that the positions and signs of entries $3,4,\ldots,n$ are preserved by $\psi_0$. This is true for the base cases by Lemmas~\ref{lem:unique-3} and \ref{lem:unique-4}. If $\psi_0$ changes the position of some $k<n$, then restricting the tableaux to entries up to $k$ would contradict the result for smaller values, so we may assume entries $3,4,\ldots,n-1$ are preserved by $\psi_0$ by induction. Therefore at worst $\psi_0$ toggles the sign on $n$. For this to occur, we must have $n$ not on the main diagonal (else it could not have a sign) and so some entry with absolute value $k$ immediately left of $n$ and in the same row. Then we must have $k\geq 3$ since the smallest entry in row $2$ or higher is $3$ and $n\geq 5$ in row $1$ must have at least three entries to its left. By Theorem~\ref{thm:shape}, we may apply some sequence of dual equivalence involution $d_{k+1},\ldots,d_{n-1}$ until $k$ is replaced by $n-1$. Since all of those involutions commute with $\psi_0$, the sign of $n$ must be toggled on the result as well, which toggles whether or not $n-1$ is a descent, contradicting Lemma~\eqref{lem:des0}. Thus the positions and signs of entries $3,4,\ldots,n$ are preserved by $\psi_0$, guaranteeing its uniqueness.
\end{proof}

%
\section{Schur $P$-positivity}
%

To show that the existence of a queer dual equivalence implies Schur $P$-positivity, we mimic the proof that dual equivalence implies Schur positivity (see \cite{Ass15}), beginning with the following lemma.

\begin{lemma}[\cite{Ass15}(Lemma~3.11)]
  Let $\{\varphi_i\}_{1<i<n+1}$ be a dual equivalence for $(\mathcal{A},\Des)$. If for $T \in \mathcal{A}$ and for some partition $\lambda$ of $n$, there is a bijection $f:[T]_{\leq n-1}\rightarrow\SYT(\lambda)$ such that for any $U\in[T]_{\leq n-1}$, we have
  \begin{itemize}
  \item $f ( \varphi_i ( U ) ) = d_i ( f ( U ) )$ for $i=2,\ldots,n-1$, and
  \item $\Des(f(U)) \cap [n-1] = \Des(U) \cap [n-1]$,
  \end{itemize}
  then there exists a unique partition $\mu$ of $n+1$ with $\lambda \subset \mu$ such that for any tableau $S \in \SYT(\mu)$ that restricts to $\SYT(\lambda)$, we have $\Des(f^{-1}(S|_{\leq n}))=\Des(S)$.
\end{lemma}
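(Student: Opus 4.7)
The plan is to invoke Theorem~\ref{thm:deg} on the full dual equivalence $\{\varphi_i\}_{1<i<n+1}$ at $T$ to produce the candidate partition $\mu$, and then verify compatibility with $f$ by means of a rigidity argument for dual equivalence graphs. Since $\{\varphi_i\}_{1<i<n+1}$ is a dual equivalence on $\mathcal{A}$, Theorem~\ref{thm:deg} yields a unique partition $\mu$ of $n+1$ such that the generating function of $[T]$ equals $s_\mu(X)$. By the standard (non-queer) analogue of Proposition~\ref{prop:noauto-noniso}, there is a unique bijection $g : [T] \to \SYT(\mu)$ that intertwines $\varphi_i$ with $d_i$ for $i = 2,\ldots,n$ and preserves $\Des$; this $\mu$ will be the partition demanded by the lemma.

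Next I would identify $\mu$ as an extension of $\lambda$. The involutions $\varphi_i$ and $d_i$ for $i<n$ do not move the entry with value $n+1$, so each $\{d_i\}_{2\leq i\leq n-1}$-equivalence class in $\SYT(\mu)$ consists of tableaux in which the cell occupied by $n+1$ is fixed, and restriction $S\mapsto S|_{\leq n}$ identifies such a class with $\SYT(\nu)$ where $\mu/\nu$ is that cell, using Haiman's connectivity of $\SYT(\nu)$ under standard dual equivalence. Applying $g$ to the subclass $[T]_{\leq n-1}$ thus yields a descent-preserving, dual-equivalence-intertwining bijection onto some $\SYT(\nu)$; composing with the given $f^{-1}:\SYT(\lambda)\to[T]_{\leq n-1}$ gives a map $\SYT(\lambda)\to\SYT(\nu)$ with the same structure, which forces $\nu=\lambda$ and hence $\lambda\subset\mu$ with $\mu/\lambda$ a single cell.

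With $\mu$ in hand, the existence claim reduces to showing $\Des(f^{-1}(S|_{\leq n})) = \Des(S)$ whenever $S\in\SYT(\mu)$ restricts to $\SYT(\lambda)$. Write $R$ for the restriction bijection $S\mapsto S|_{\leq n}$ from the $\{d_i\}_{i<n}$-class of $g(T)$ onto $\SYT(\lambda)$. Then both $R\circ g|_{[T]_{\leq n-1}}$ and $f$ are bijections $[T]_{\leq n-1}\to\SYT(\lambda)$ that intertwine $\varphi_i$ with $d_i$ for $i<n$ and preserve descents on $[n-1]$; by the rigidity of standard dual equivalence graphs, they coincide. Hence $g^{-1}(S) = f^{-1}(S|_{\leq n})$ for every admissible $S$, and since $g$ preserves full descent sets, $\Des(f^{-1}(S|_{\leq n})) = \Des(g^{-1}(S)) = \Des(S)$, as required. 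For uniqueness of $\mu$, if some $\mu'\neq\mu$ also contained $\lambda$ and satisfied the conclusion, then the unique added cell $\mu'/\lambda$ would force $n\in\Des(S)$ or $n\notin\Des(S)$ differently from $\mu/\lambda$ (depending on whether $n+1$ is placed in a higher row than $n$) for at least one tableau restricting to $\SYT(\lambda)$, contradicting the predetermined matching with $\Des(f^{-1}(S|_{\leq n}))$.

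The main obstacle is the appeal to rigidity of standard dual equivalence graphs, i.e., that any descent-preserving bijection between $\{d_i\}$-graphs on $\SYT(\lambda)$ intertwining all $d_i$ must be the identity. This is the non-queer analogue of Proposition~\ref{prop:noauto-noniso}, and its proof follows the same template: the superstandard tableau with descent set $\{\lambda_1,\lambda_1+\lambda_2,\ldots\}$ is the unique tableau in $\SYT(\lambda)$ with that descent set and must therefore be a fixed point of any such bijection, after which connectedness of $\SYT(\lambda)$ under dual equivalence propagates the identity everywhere. Once this rigidity is invoked, the remainder is a bookkeeping argument linking $f$, $g$, and the restriction map $R$.
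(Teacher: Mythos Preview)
The paper does not supply its own proof of this lemma; it is quoted verbatim from \cite{Ass15}. The relevant comparison is therefore with the argument in \cite{Ass15}, whose shape is mirrored in the paper's proof of the shifted analogue, Lemma~\ref{lem:extend-signs}.

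Your approach has a genuine circularity. You invoke Theorem~\ref{thm:deg} together with ``the standard analogue of Proposition~\ref{prop:noauto-noniso}'' to produce a structure-preserving bijection $g:[T]\to\SYT(\mu)$ intertwining all $\varphi_i$ with $d_i$ and preserving descents. But the rigidity statement only gives \emph{uniqueness} of such a bijection, not \emph{existence}; and in \cite{Ass15} the existence of $g$ is precisely what is built by induction on $n$, with Lemma~3.11 serving as the inductive step (this is the role played in the present paper by Lemma~\ref{lem:extend-signs} inside Theorem~\ref{thm:cover}). So assuming $g$ to prove the lemma assumes the conclusion of the induction to prove its step.

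The intended argument is more elementary and does not require the global bijection $g$. One first shows, using only the dual equivalence axioms on $\varphi_{n-1},\varphi_n$, that the value of $\Des\cap\{n\}$ is constant on each restricted class $[U]_{\leq n-2}$. Via $f$, these restricted classes are indexed by the removable corners of $\lambda$, and a direct analysis of $\varphi_{n-1}$ (translated through $f$ to $d_{n-1}$) shows that as one moves from the northwest-most to the southeast-most removable corner, the status of $n$ in $\Des$ can only switch from ``not in'' to ``in''. This monotonicity singles out a unique addable cell of $\lambda$, hence a unique $\mu\supset\lambda$, with the claimed descent compatibility. The paper's proof of Lemma~\ref{lem:extend-signs} follows exactly this template in the shifted setting (see Figure~\ref{fig:monotone}); that is the argument you should be reproducing here.
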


In the unshifted case, each pair of partitions $\lambda\subset\mu$ with $|\mu|-|\lambda|=1$ has one natural inclusion map $\SYT(\lambda)\subset\SYT(\mu)$. In the shifted case, a pair of strict partitions $\gamma \subset \delta$ with $|\delta|-|\gamma|=1$ can have two natural inclusion maps $\SST(\gamma)\subset\SST(\delta)$, based on whether we regard the letter in the unique cell in the difference of diagrams to have a marking or not. For example, Fig.~\ref{fig:imbed-41} shows the queer dual equivalence for $\SST(4,1)$ with cells containing entry $5$ or $\st{5}$ highlighted. Notice there are two embeddings of $\SST(3,1)$ into $\SST(4,1)$, by appending a cell with entry $5$ or $\st{5}$, and only one embedding of $\SST(4)$ into $\SST(4,1)$. 

\begin{figure}[ht]
  \begin{center}
    \begin{tikzpicture}[xscale=1.9,yscale=1.6,
        label/.style={%
          postaction={ decorate,
            decoration={ markings, mark=at position 0.5 with \node #1;}}}]
      \node at (0,2)   (A1) {$\tableau{& 3 \\ 1 & \st{2} & 4 & \grayify{\st{5}}}$};
      \node at (1,2)   (A2) {$\tableau{& 3 \\ 1 & 2 & 4 & \grayify{\st{5}}}$};
      \node at (2,2)   (A3) {$\tableau{& 4 \\ 1 & 2 & 3 & \grayify{\st{5}}}$};
      \node at (1.5,1) (B1) {$\tableau{& 4 \\ 1 & \st{2} & 3 & \grayify{\st{5}}}$};
      \node at (0.5,1) (B2) {$\tableau{& 4 \\ 1 & 2 & \st{3} & \grayify{\st{5}}}$};
      \node at (0.25,0)   (C1) {$\tableau{& 4 \\ 1 & \st{2} & \st{3} & \grayify{\st{5}}}$};
      \node at (1.25,0)   (C2) {$\tableau{& 3 \\ 1 & \st{2} & \st{4} & \grayify{\st{5}}}$};
      \node at (2.25,0)   (C3) {$\tableau{& 3 \\ 1 & 2 & \st{4} & \grayify{\st{5}}}$};
      \node at (3.25,0) (a1) {$\tableau{& 3 \\ 1 & \st{2} & 4 & \grayify{5}}$};
      \node at (4.25,0) (a2) {$\tableau{& 3 \\ 1 & 2 & 4 & \grayify{5}}$};
      \node at (5.25,0) (a3) {$\tableau{& 4 \\ 1 & 2 & 3 & \grayify{5}}$};
      \node at (5,1) (b1) {$\tableau{& 4 \\ 1 & \st{2} & 3 & \grayify{5}}$};
      \node at (4,1) (b2) {$\tableau{& 4 \\ 1 & 2 & \st{3} & \grayify{5}}$};
      \node at (3.5,2) (c1) {$\tableau{& 4 \\ 1 & \st{2} & \st{3} & \grayify{5}}$};
      \node at (4.5,2) (c2) {$\tableau{& 3 \\ 1 & \st{2} & \st{4} & \grayify{5}}$};
      \node at (5.5,2) (c3) {$\tableau{& 3 \\ 1 & 2 & \st{4} & \grayify{5}}$};
      \node at (0,-1.5) (T1) {$\tableau{& \grayify{5} \\ 1 & 2 & 3 & 4}$};
      \node at (1,-1.5) (T2) {$\tableau{& \grayify{5} \\ 1 & \st{2} & 3 & 4}$};
      \node at (2,-1.5) (T3) {$\tableau{& \grayify{5} \\ 1 & 2 & \st{3} & 4}$};
      \node at (2.75,-0.8) (T4) {$\tableau{& \grayify{5} \\ 1 & 2 & 3 & \st{4}}$};
      \node at (2.75,-2.2) (U3) {$\tableau{& \grayify{5} \\ 1 & \st{2} & \st{3} & 4}$};
      \node at (3.5,-1.5) (U4) {$\tableau{& \grayify{5} \\ 1 & \st{2} & 3 & \st{4}}$};
      \node at (4.5,-1.5) (U5) {$\tableau{& \grayify{5} \\ 1 & 2 & \st{3} & \st{4}}$};
      \node at (5.5,-1.5) (U6) {$\tableau{& \grayify{5} \\ 1 & \st{2} & \st{3} & \st{4}}$};
      \draw[thick,color=red   ,label={[above]{$d_{2}$}}] (T2) -- (T3) ;
      \draw[thick,color=blue,label={[left ]{$d_{3}$}}]   (T3) -- (T4) ;
      \draw[thick,color=blue,label={[right]{$d_{3}$}}]   (U3) -- (U4) ;
      \draw[thick,color=red   ,label={[above]{$d_{2}$}}] (U4) -- (U5) ;
      \draw[thick,color=violet,label={[above]{$d_{0}$}}] (T1) -- (T2) ;
      \draw[thick,color=violet,label={[below]{$d_{0}$}}] (T3) -- (U3) ;
      \draw[thick,color=violet,label={[right]{$d_{0}$}}] (T4) -- (U4) ;
      \draw[thick,color=violet,label={[above]{$d_{0}$}}] (U5) -- (U6) ;
      \draw[thick,color=red   ,label={[above]{$d_{2}$}}] (A1.05) -- (A2.175) ;
      \draw[thick,color=blue  ,label={[above]{$d_{3}$}}] (A2) -- (A3) ;
      \draw[thick,color=red   ,label={[above]{$d_{2}$}}] (B2.05) -- (B1.175) ;
      \draw[thick,color=blue  ,label={[below]{$d_{3}$}}] (B2.355)-- (B1.185) ;
      \draw[thick,color=blue  ,label={[above]{$d_{3}$}}] (C1) -- (C2) ;
      \draw[thick,color=red   ,label={[above]{$d_{2}$}}] (C2.05) -- (C3.175) ;
      \draw[thick,color=violet,label={[below]{$d_{0}$}}] (A1.355) -- (A2.185) ;
      \draw[thick,color=violet,label={[left ]{$d_{0}$}}] (A3) -- (B1) ;
      \draw[thick,color=violet,label={[left ]{$d_{0}$}}] (B2) -- (C1) ;
      \draw[thick,color=violet,label={[below]{$d_{0}$}}] (C2.355) -- (C3.185) ;
      \draw[thick,color=red   ,label={[above]{$d_{2}$}}] (a1.05) -- (a2.175) ;
      \draw[thick,color=blue  ,label={[above]{$d_{3}$}}] (a2) -- (a3) ;
      \draw[thick,color=red   ,label={[above]{$d_{2}$}}] (b2.05) -- (b1.175) ;
      \draw[thick,color=blue  ,label={[below]{$d_{3}$}}] (b2.355)-- (b1.185) ;
      \draw[thick,color=blue  ,label={[above]{$d_{3}$}}] (c1) -- (c2) ;
      \draw[thick,color=red   ,label={[above]{$d_{2}$}}] (c2.05) -- (c3.175) ;
      \draw[thick,color=violet,label={[below]{$d_{0}$}}] (a1.355) -- (a2.185) ;
      \draw[thick,color=violet,label={[right]{$d_{0}$}}] (a3) -- (b1) ;
      \draw[thick,color=violet,label={[right]{$d_{0}$}}] (b2) -- (c1) ;
      \draw[thick,color=violet,label={[below]{$d_{0}$}}] (c2.355) -- (c3.185) ;
      \draw[thick,color=magenta ,label={[above]{$d_{4}$}}] (A1) to[out=90 ,in=90 , distance=0.7\cellsize] (c2) ;
      \draw[thick,color=magenta ,label={[above]{$d_{4}$}}] (A2) to[out=90 ,in=90 , distance=0.7\cellsize] (c3) ;
      \draw[thick,color=magenta ,label={[right]{$d_{4}$}}] (C1) -- (T1) ;
      \draw[thick,color=magenta ,label={[left ]{$d_{4}$}}] (B2) to[out=225,in=150, distance=2.75\cellsize] (T2) ;
      \draw[thick,color=magenta ,label={[right]{$d_{4}$}}] (b1) to[out=315,in=30 , distance=2.75\cellsize] (U5) ;
      \draw[thick,color=magenta ,label={[left ]{$d_{4}$}}] (a3) -- (U6) ;
      \draw[thick,color=magenta ,label={[below]{$d_{4}$}}] (T3.30) -- (T4.230) ;
      \draw[thick,color=magenta ,label={[above]{$d_{4}$}}] (U3.50) -- (U4.210) ;
    \end{tikzpicture}
    \caption{\label{fig:imbed-41}Queer dual equivalence for $\SST(4,1)$.}
  \end{center}
\end{figure}

For $\gamma$ a strict partition, the \newword{symmetric diagram} for $\gamma$, denoted by $\mathrm{sym}(\gamma)$, is
\begin{equation}
  \mathrm{sym}(\gamma)_i = \left\{ \begin{array}{rl}
    \gamma_i + i - 1 & \mbox{if $i \leq \ell(\gamma)$,} \\
    \#\{j \mid \gamma_j + j - 1 \geq i \} & \mbox{otherwise}.
  \end{array} \right.
  \label{e:sym}
\end{equation}
That is, the Young diagram for $\mathrm{sym}(\gamma)$ is the shifted diagram for $\gamma$ together with its transpose identified along the main diagonal; see Figure~\ref{fig:symmetric}. 

\begin{figure}[ht]
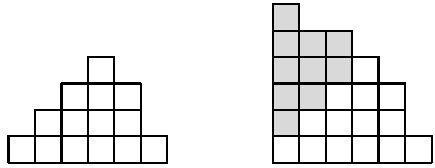

  \begin{displaymath}
    \tableau{\\ \\  & & & \ \\ &  & \ & \ & \ \\ & \ & \ & \ & \ \\ \ & \ & \ & \ & \ & \ }
    \hspace{4em}
    \tableau{ \cb \\ \cb & \cb & \cb \\ \cb & \cb & \cb & \ \\ \cb & \cb & \ & \ & \ \\ \cb & \ & \ & \ & \ \\ \ & \ & \ & \ & \ & \ }
  \end{displaymath}
  \caption{\label{fig:symmetric}The shifted (left) and symmetric (right) diagrams for $(6,4,3,1)$.}
\end{figure}

With this notation in mind, we have the following shifted analog.

\begin{lemma}
  Let $\{\psi_i\}_{i=0,2,\ldots,n}$ be a queer dual equivalence for $(\mathcal{A},\Des)$. If for $T \in \mathcal{A}$ and for some strict partition $\gamma$ of $n$, there is a bijection $f:[T]_{\leq n-1}\rightarrow\SST(\gamma)$ such that for any $U\in[T]_{\leq n-1}$, we have
  \begin{itemize}
  \item $f ( \psi_i ( U ) ) = d_i ( f ( U ) )$ for $i=0,2,\ldots,n-1$, and
  \item $\Des(f(U)) \cap [n-1] = \Des(U) \cap [n-1]$,
  \end{itemize}
  then there exists a unique strict partition $\delta$ of $n+1$ with $\gamma \subset \delta$ and an embedding of $\SST(\gamma)$ into $\SST(\delta)$ such that for any tableau $S \in \SST(\gamma)$ regarded as an element of $\SST(\delta)$, we have $\Des(f(U))=\Des(U)$ for every $U\in[T]_{\leq n-1}$. 
  \label{lem:extend-signs}
\end{lemma}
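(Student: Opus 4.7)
The plan is to imitate the strategy of the unshifted analog (Lemma~3.11 of \cite{Ass15}), with the extra bookkeeping that in the shifted setting each addable corner of $\gamma$ can be extended either marked or unmarked, so the ``one-cell extension'' involves both a shape choice and a sign choice (except on the main diagonal, where only the unmarked sign is permitted).

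First I would enumerate the finitely many candidate embeddings: for each addable corner $c$ of $\gamma$ and each permissible marking choice, consider the injection $\iota : \SST(\gamma) \to \SST(\delta)$ that inserts either an unmarked $n+1$ or a marked $n+1$ into $c$, where $\delta = \gamma \cup \{c\}$. Since $n+1$ is the largest entry, each such $\iota$ is well-defined. For uniqueness, the key claim is that distinct choices of $(\delta, c, \text{marking})$ produce distinct descent patterns at position $n$ on $\SST(\gamma)$: toggling the marking of $c$ moves $n+1$ between the column scan and the row scan in the hook reading word, which changes whether $n \in \Des(\iota(S))$ for at least one $S \in \SST(\gamma)$; likewise, moving the corner $c$ changes the column of $n+1$, altering its reading order and therefore the $n$th descent on at least one $S$. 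Hence at most one embedding can satisfy $\Des(\iota(f(U))) = \Des(U)$ for every $U \in [T]_{\leq n-1}$.

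For existence, I would propagate the choice from the canonical representative $U_0 = f^{-1}(S_\gamma)$. By Theorem~\ref{thm:shape}, every $U \in [T]_{\leq n-1}$ is connected to $U_0$ by a sequence of involutions $\psi_i$ for $i \in \{0, 2, \ldots, n-1\}$, and the intertwining $f(\psi_i(U)) = d_i(f(U))$ means that $\iota(f(U)) \in \SST(\delta)$ is obtained from $\iota(f(U_0))$ by applying the corresponding $d_i$'s in $\SST(\delta)$, since each $d_i$ with $i \leq n-1$ fixes the cell and marking of the entry $n+1$. The verification of $\Des(\iota(f(U))) = \Des(U)$ then propagates along this sequence: for $2 \leq i \leq n-2$, the involution $\psi_i$ affects only descents at positions $\leq n-1$ on both sides and hence preserves whether $n \in \Des$; for $\psi_0$, invariance of $n \in \Des$ is precisely Lemma~\ref{lem:des0}; for $\psi_{n-1}$, a case analysis on Definition~\ref{def:deg_shifted} matches the toggle at position $n$ of $d_{n-1}$ on $\SST(\delta)$ (which depends on the location of the fixed $n+1$ cell) with the toggle forced on $\mathcal{A}$ by the intertwining.

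The principal obstacle is the $\psi_{n-1}$ step, since this is the only involution that can simultaneously alter both the $(n-1)$st and $n$th descents. Verifying consistency requires matching each of the four cases of Definition~\ref{def:deg_shifted} against the position and marking of $n+1$ in the candidate $\delta$, and showing that any mismatch between the proposed embedding and the descent data would be detected on some $\psi_{n-1}$-neighbor of $U_0$, thereby forcing the unique correct choice via the enumeration above. Once this is verified, uniqueness of both $\delta$ and the embedding follows from the descent-distinction argument, completing the proof.
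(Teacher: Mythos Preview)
Your outline diverges from the paper's argument, and the place where it diverges is also where it has a real gap. The paper does not propagate a choice along edges from a canonical basepoint; instead it \emph{unfolds} the shifted diagram into the symmetric Young diagram $\mathrm{sym}(\gamma)$, sending each marked entry to its reflection across the main diagonal. In that picture, the sign on an entry becomes positional information (above or below the diagonal), so the restricted classes $[U]_{\leq n-2}$ are indexed by removable cells of $\mathrm{sym}(\gamma)$ rather than by pairs (removable cell of $\gamma$, marking). The paper can then invoke \cite{Ass15}(Lemma~3.11) verbatim: $\Des\cap\{n\}$ is constant on each such class, and as the indexing removable cell moves from northwest to southeast the value of $n\in\Des$ is monotone. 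This monotonicity is what pins down a unique addable cell of $\mathrm{sym}(\gamma)$ for $n+1$, and whether that cell lies above or below the diagonal determines the marking. Existence and uniqueness come out simultaneously from this single structural fact.

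Your approach, by contrast, never establishes this monotonicity, and the step you flag as the ``principal obstacle'' is precisely where it is needed. Saying that a $\psi_{n-1}$ case analysis will match the toggle at position $n$ is not enough: you must show that \emph{one fixed} placement of $n+1$ makes the match come out right for \emph{every} $\psi_{n-1}$-edge in the class, not just for neighbors of $U_0$. Without monotonicity (or an equivalent global statement) there is nothing ruling out that one $\psi_{n-1}$-edge forces the cell northwest of some corner while another forces it southeast. Relatedly, your existence paragraph never says which embedding you pick at $U_0$; several embeddings may agree with $\Des(U_0)$ at position $n$ and only be distinguished elsewhere in the class, so the basepoint alone does not determine the choice. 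The symmetric-diagram reduction is exactly the device that resolves both issues at once.
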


\begin{proof}
  We may regard $\SST(\gamma)$ as \emph{partial} fillings of $\SST(\mathrm{sym}(\gamma))$ obtained by leaving unmarked entries in place and reflecting each marked entry across the main diagonal. In so doing, dual equivalence on signed standard tableaux better resembles dual equivalence on standard Young tableaux in that $d_{n-1}$ acts by cases (3) or (4) with $n-2$ on diagonal $b$ in Definition~\ref{def:deg_shifted}, it does so by swapping $n$ and $n-1$ and, in case (3), shifting the landing spots one cell left/right or up/down.

  An \newword{addable cell} for a Young (resp. shifted) diagram is a cell not in the diagram such that adding it results in a Young (resp. shifted) diagram. Similarly, a \newword{removable cell} for a Young (resp. shifted) diagram is a cell in the diagram such that removing it results in a Young (resp. shifted) diagram. Notice that each addable/removable cell for a shifted diagram not on the main diagonal corresponds to exactly two addable/removable cells of its symmetric Young diagram, one above and one below the main diagonal. 

  Let $\mathcal{T} = [T]_{\leq n-1}$. Parallel to the proof of \cite{Ass15}(Lemma~3.11), we may use the bijection $f : \mathcal{T} \rightarrow \SST(\gamma)$ to identify uniquely each restricted class $[U]_{\leq n-2}$ for $U\in \mathcal{T}$ with a removable cell of $\mathrm{sym}(\gamma)$, where the identified cell is either the one containing $n$ or the reflection across the main diagonal of the one containing $\st{n}$, whichever case applies. As in \cite{Ass15}(Lemma~3.11), we have the property that $\Des\cap\{n\}$ is constant on each of these restricted classes $[U]_{\leq n-2}$. Moreover, if $U,V\in\mathcal{T}$ such that the cell for $[U]_{\leq n-2}$ lies northwest of the cell for $[V]_{\leq n-2}$, then \cite{Ass15}(Lemma~3.11) shows that (a) if $n \in \Des(U)$, then $n \in \Des(V)$, and (b) if $n \not\in \Des(V)$, then $n \not\in \Des(U)$. This is illustrated in Fig.~\ref{fig:monotone} for $\gamma=(6,4,3,1)$. 

  \begin{figure}[ht]
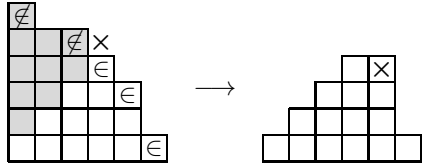

    \begin{displaymath}
      \tableau{ \grayify{\not\in} \\ \cb & \cb & \grayify{\not\in} \\ \cb & \cb & \cb & \in \\ \cb & \cb & \ & \ & \in \\ \cb & \ & \ & \ & \ \\ \ & \ & \ & \ & \ & \in } \hspace{-2.5\cellsize}\raisebox{-0.75\cellsize}{\makebox[0pt]{$\boldsymbol{\times}$}}
      \hspace{3.5em} \raisebox{-2.5\cellsize}{$\longrightarrow$} \hspace{1em}
      \tableau{\\ \\  & & & \ & \boldsymbol{\times}\\ &  & \ & \ & \ \\ & \ & \ & \ & \ \\ \ & \ & \ & \ & \ & \ }
    \end{displaymath}
    \caption{\label{fig:monotone}Illustration of the descent pattern of $n$ determining the partition $\delta \supset (6,4,3,1)$.}
  \end{figure}

  In particular, there is a unique addable cell of $\mathrm{sym}(\gamma)$ such that all removable cells above this have $n \not\in \Des$ and all removable cells below this have $n\in\Des$. Therefore we may add a cell containing $n+1$ in this position. Then $\delta$ is the resulting strict shape, and the embedding inserts $n+1$ if the addable cell is weakly below the main diagonal and $\st{n+1}$ if it is above.   
\end{proof}

The following result, analogous to \cite{Ass15}(Theorem~3.13), is the key to establishing Schur $P$-positivity by induction. It shows that we can lift the bijection from $\SST(\gamma)$ to $\SST(\delta)$, where $\delta$ is the partition found in Lemma~\ref{lem:extend-signs}.

\begin{theorem}
  Suppose $\{\psi_i\}_{i=0,2,\ldots,n}$ is a queer dual equivalence for $(\mathcal{A},\Des)$ such that for each restricted equivalence class, there is a bijection $f : [T]_{\leq n-1} \rightarrow \SST(\gamma)$ for some strict partition $\gamma$ of $n$ such that for all $U \in \mathcal{A}$, we have 
  \begin{itemize}
  \item $f ( \psi_i ( U ) ) = d_i ( f ( U ) )$ for $i=0,2,\ldots,n-1$, and
  \item $\Des(f(U)) \cap [n-1] = \Des(U) \cap [n-1]$.
  \end{itemize}
  Then $f$ extends to a descent-preserving map from $\mathcal{A}$ to $\SST(\delta)$ for some unique strict partition $\delta$ of $n+1$ such that $f ( \psi_i ( U ) ) = d_i ( f ( U ) )$ for $i=0,2,\ldots,n$.
  \label{thm:cover}
\end{theorem}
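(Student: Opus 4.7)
The plan is to parallel the proof of \cite{Ass15}(Theorem~3.13), using Lemma~\ref{lem:extend-signs} as the shifted analog of \cite{Ass15}(Lemma~3.11) and handling $\psi_0$ via the commutativity axiom (ii) of Definition~\ref{def:queer-deg}.

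First, I would apply Lemma~\ref{lem:extend-signs} to each restricted equivalence class $[U]_{\leq n-1}$ in $\mathcal{A}$, obtaining for each a strict partition $\delta_U$ of $n+1$ with $\gamma_U \subset \delta_U$ together with an embedding of $\SST(\gamma_U)$ into $\SST(\delta_U)$ that captures the descent at $n$. This extends $f$ in a descent-preserving way on each restricted class; the remaining task is to glue these extensions coherently across $\psi_n$- and $\psi_0$-edges.

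Second, I would regard $\SST(\delta_U)$ inside $\SYT(\mathrm{sym}(\delta_U))$ using the partial-filling identification from the proof of Lemma~\ref{lem:extend-signs}, turning $f$ into a family of maps into standard Young tableaux intertwining $\psi_i$ with $d_i$ for $1<i<n$. Temporarily forgetting $\psi_0$, the involutions $\{\psi_i\}_{1<i\leq n}$ form an ordinary dual equivalence on $\mathcal{A}$, so \cite{Ass15}(Theorem~3.13) furnishes a unique partition $\lambda$ of $n+1$ and a descent-preserving extension $\mathcal{A}\rightarrow\SYT(\lambda)$ intertwining $\psi_i$ with $d_i$ for $2\leq i\leq n$. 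Comparing with the embeddings above, $\lambda$ must equal $\mathrm{sym}(\delta)$ for a single strict partition $\delta$, so all $\delta_U$ agree, and the image of $f$ lands inside the subset $\SST(\delta)\subset\SYT(\mathrm{sym}(\delta))$.

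Third, I would verify the commutation $f\circ\psi_0 = d_0\circ f$ globally. By hypothesis this holds on each restricted class, and for $n\geq 4$ the commutation $\psi_0\psi_n=\psi_n\psi_0$ from condition (ii) of Definition~\ref{def:queer-deg} matches the identity $d_0 d_n = d_n d_0$ on $\SST(\delta)$ (since $d_0$ and $d_n$ act on disjoint letters once $n\geq 4$), so the intertwining with $d_0$ propagates across $\psi_n$-edges. The base cases $n\leq 3$ are dispatched by Lemmas~\ref{lem:unique-2}, \ref{lem:unique-3}, and \ref{lem:unique-4}.

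The principal obstacle is the identification $\lambda=\mathrm{sym}(\delta)$: the partition produced by the unshifted theorem must be forced to be symmetric across its main diagonal, and the image of $f$ in $\SYT(\lambda)$ must coincide with the embedded copy of $\SST(\delta)$ rather than a larger subset. This symmetry should follow from the reflective compatibility of $\psi_0$ with $d_0$ together with the descent monotonicity at $n$ established in Lemma~\ref{lem:extend-signs}, but verifying it carefully, together with the sign bookkeeping for entries crossing the main diagonal, is the technical heart of the argument.
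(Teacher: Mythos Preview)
Your high-level plan---extend each restricted class via Lemma~\ref{lem:extend-signs}, then invoke \cite{Ass15}(Theorem~3.13) on the underlying ordinary dual equivalence to enforce consistency across $\psi_n$---is exactly the paper's. The gap is in Step~2.

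When you drop $\psi_0$, the set $\mathcal{A}$ does \emph{not} become a single dual equivalence class under $\{\psi_i\}_{1<i\leq n}$; it breaks into many ordinary dual equivalence components (one for each Schur summand of the generating function). So \cite{Ass15}(Theorem~3.13) does not produce one partition $\lambda$ and one extension $\mathcal{A}\to\SYT(\lambda)$; it produces a different $\mu$ for each component, and these $\mu$ are precisely the partitions occurring in $P_{\delta}=\sum_{\lambda} g_{\delta,\lambda}s_{\lambda}$, never $\mathrm{sym}(\delta)$ itself. The identification $\lambda=\mathrm{sym}(\delta)$ that you flag as the ``principal obstacle'' is therefore not merely difficult but false. (The partial-filling embedding of $\SST(\gamma)$ into the symmetric diagram in the proof of Lemma~\ref{lem:extend-signs} is a device for reading off descent monotonicity at $n$, not a map into $\SYT(\mathrm{sym}(\gamma))$.)

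The paper sidesteps this by applying \cite{Ass15}(Theorem~3.13) \emph{locally} rather than globally. For each $T$, look at the ordinary dual equivalence subclasses $[T]_{2,\ldots,n-1}$ and $[\psi_n(T)]_{2,\ldots,n-1}$ sitting inside the queer restricted classes $[T]_{\leq n-1}$ and $[\psi_n(T)]_{\leq n-1}$. The unshifted theorem already forces $f\circ\psi_n=d_n\circ f$ on these subclasses and pins down the added cell consistently. Since the added cell---and hence $\delta$ and the embedding $\SST(\gamma)\hookrightarrow\SST(\delta)$---is determined entirely by the descent-at-$n$ pattern (the content of Lemma~\ref{lem:extend-signs}), and that pattern is already handled on each ordinary subclass, the conclusion passes to the union of subclasses, i.e.\ to the full queer restricted class. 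Your Step~3 is then unnecessary: the hypothesis already gives $f\circ\psi_0=d_0\circ f$ on each $[T]_{\leq n-1}$, and appending an entry $n{+}1$ does not interact with $d_0$.
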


\begin{proof}
  The result follows for $n\leq 3$ by condition (i) of Definition~\ref{def:queer-deg}, so we may proceed by induction assuming $n \geq 4$. By Lemma~\ref{lem:extend-signs}, each bijection $f : [T]_{\leq n-1} \rightarrow \SST(\gamma)$ lifts to a unique descent-preserving injection to $\SST(\delta)$ for a unique strict partition $\delta \supset \gamma$. We must show $\delta$ is the same for each restricted equivalence class and that this lifted map satisfies $f \circ \psi_{n} = d_n \circ f$. It is enough to show this for $[T]_{\leq n-1}$ and $[\psi_n(T)]_{\leq n-1}$. However, \cite{Ass15}(Theorem~3.13) proves exactly this when we further restrict each class to avoid use of the queer involution $\psi_0$. Since the preservation of descents and intertwining of dual equivalence involutions holds for each of the restricted subclasses, it also holds for the union.
\end{proof}

Note that, as is the case for dual equivalence on Young tableaux, Theorem~\ref{thm:cover} does not state that the lifted map is a \emph{bijection}. It necessarily will be surjective, but injectivity can fail for shifted tableaux in the same way that it can for Young tableaux. The smallest example for which this can occur is for the strict partition $(4,1)$, as illustrated in Fig.~\ref{fig:cover} where we make a double cover of $\SST(4,1)$. Notice this fails condition (iii) of Definition~\ref{def:queer-deg} and so is not a queer dual equivalence.

\begin{figure}[ht]
  \begin{center}
    \begin{tikzpicture}[xscale=2.5,yscale=1.5,
        label/.style={%
          postaction={ decorate,
            decoration={ markings, mark=at position 0.5 with \node #1;}}}]
      \node at (1,2.5) (T2) {$\tableau{& \grayify{5} \\ \ & \ & \ & \ }$};
      \node at (2,2) (a2) {$\tableau{& \ \\ \ & \ & \ & \grayify{5}}$};
      \node at (0,2) (A2) {$\tableau{& \ \\ \ & \ & \ & \grayify{\st{5}}}$};
      \node at (2,1) (A1) {$\tableau{& \ \\ \ & \ & \ & \grayify{\st{5}}}$};
      \node at (0,1) (a1) {$\tableau{& \ \\ \ & \ & \ & \grayify{5}}$};
      \node at (1,0.5) (T1) {$\tableau{& \grayify{5} \\ \ & \ & \ & \ }$};
      \draw[thick,color=magenta ,label={[below]{$d_{4}$}}] (T1) -- (a1) ;
      \draw[thick,color=magenta ,label={[below]{$d_{4}$}}] (T1) -- (A1) ;
      \draw[thick,color=magenta ,label={[left ]{$d_{4}$}}] (a1) -- (A2) ;
      \draw[thick,color=magenta ,label={[right]{$d_{4}$}}] (A1) -- (a2) ;
      \draw[thick,color=magenta ,label={[above]{$d_{4}$}}] (A2) -- (T2) ;
      \draw[thick,color=magenta ,label={[above]{$d_{4}$}}] (a2) -- (T2) ;
    \end{tikzpicture}
    \caption{\label{fig:cover}A double cover of $\SST(4,1)$, where each vertex represents a restricted queer dual equivalence class for $d_0,d_2,d_3$.}
  \end{center}
\end{figure}

The following analog of Theorem~\ref{thm:deg} shows that the queer dual equivalence on signed standard tableaux is the only queer dual equivalence for any set of objects.

\begin{theorem}
  If $\{\psi_i\}$ is a queer dual equivalence for $(\mathcal{A},\Des)$, then
  \begin{equation}
    \sum_{S \in [U]} F_{\Des(S)}(X) \ = \ P_{\delta}(X)
  \end{equation}
  for any $U \in \mathcal{A}$ and for some strict partition $\delta$. In particular, the fundamental quasisymmetric generating function for $\mathcal{A}$ is Schur $P$-positive.
  \label{thm:queer-deg}
\end{theorem}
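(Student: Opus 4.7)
The plan is to prove, by induction on $n$, the following strengthening: for any queer dual equivalence $\{\psi_i\}$ on $(\mathcal{A},\Des)$ with $\Des(S)\subseteq[n-1]$ for every $S\in\mathcal{A}$, each queer dual equivalence class $[U]\subseteq\mathcal{A}$ admits a unique strict partition $\delta$ of $n$ and a descent-preserving bijection $f:[U]\to\SST(\delta)$ intertwining $\psi_i$ with $d_i$ for each $i\in\{0,2,\ldots,n-1\}$. Once such a bijection is in hand, Definition~\ref{def:schurP} yields $\sum_{S\in[U]}F_{\Des(S)}(X)=\sum_{T\in\SST(\delta)}F_{\Des(T)}(X)=P_\delta(X)$, establishing the theorem.

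The base cases $n\leq 4$ are handled by applying condition (i) of Definition~\ref{def:queer-deg} with $i=n-1$, which pins down the generating function of $[U]=[U]_{\leq n-1}$ as $P_\delta$ for some strict $\delta$ of size $n$, combined with Lemmas~\ref{lem:unique-2}, \ref{lem:unique-3}, and \ref{lem:unique-4}, which exhibit and identify the required bijection. For the inductive step $n\geq 5$, fix a class $[U]$. Each level-$(n-2)$ restricted class $[T]_{\leq n-2}\subseteq[U]$ inherits the structure of a queer dual equivalence for the involutions $\psi_0,\psi_2,\ldots,\psi_{n-2}$ under descent map $\Des\cap[n-2]$, and so by the inductive hypothesis admits a unique descent-preserving, involution-intertwining bijection $f_T:[T]_{\leq n-2}\to\SST(\gamma_T)$ for a unique strict $\gamma_T$ of size $n-1$. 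Applying Theorem~\ref{thm:cover} with its $n$ set to $n-1$ then lifts these compatible bijections to a unique strict partition $\delta$ of size $n$ and a descent-preserving map $f:[U]\to\SST(\delta)$ intertwining $\psi_i$ with $d_i$ for each $i\in\{0,2,\ldots,n-1\}$. Surjectivity of $f$ is automatic because its image is closed under every $d_i$, and the standard queer dual equivalence graph $\mathcal{H}_\delta$ is connected by Theorem~\ref{thm:shape}.

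The main obstacle is establishing injectivity of $f$. By the inductive bijection and Proposition~\ref{prop:noauto-noniso}, $f$ restricts to a bijection on each level-$(n-2)$ restricted class of $[U]$, so any failure of injectivity must arise from a nontrivial cover, wherein two or more level-$(n-2)$ classes of $[U]$ collapse onto a single such class in $\SST(\delta)$; the archetype is the double cover of $\SST(4,1)$ illustrated in Fig.~\ref{fig:cover}. Suppose $S\neq T$ in $[U]$ lie in different sheets of such a cover with $f(S)=f(T)$. Since every $\psi_i$ with $i\leq n-2$ preserves sheets, any chain $T=\psi_{i_j}\cdots\psi_{i_1}S$ can change sheets only through occurrences of $\psi_{n-1}$, and projecting the chain via $f$ gives a nontrivial closed walk at $f(S)$ in $\mathcal{H}_\delta$. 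Combining the rigidity of $\mathcal{H}_\delta$ encoded in Corollary~\ref{cor:two-edge} with the sheet-crossing structure of the cover (visible as the $6$-cycle in Fig.~\ref{fig:cover} lying over a $3$-cycle) then forces any such chain to use at least three copies of $\psi_{n-1}$, contradicting the bound of two guaranteed by condition (iii) of Definition~\ref{def:queer-deg}. Hence $f$ is injective, completing the induction and proving that the generating function over $[U]$ is the asserted Schur $P$-function.
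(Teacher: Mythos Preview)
Your proposal follows essentially the same architecture as the paper's proof: induction on $n$, apply the inductive hypothesis to the level-$(n-2)$ restricted classes, invoke Theorem~\ref{thm:cover} to lift to a map $f:[U]\to\SST(\delta)$, get surjectivity from connectivity of $\mathcal{H}_\delta$ (Theorem~\ref{thm:shape}), and get injectivity from condition~(iii). The first five steps match the paper almost verbatim.

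The injectivity argument, however, has a gap. You assert that any chain from $S$ to $T$ (with $f(S)=f(T)$, $S\neq T$) must use at least three copies of $\psi_{n-1}$, but your justification is an appeal to the specific double cover in Fig.~\ref{fig:cover}, where a $6$-cycle sits over a $3$-cycle. That example is illustrative, not a proof: you have not shown that an arbitrary nontrivial cover of $\mathcal{H}_\delta$ forces sheet-changing paths to be this long. Corollary~\ref{cor:two-edge} is a statement about the \emph{base} $\SST(\delta)$, not about lifts in the cover, so invoking it as ``rigidity'' does not by itself bound sheet-changing paths from below. In particular, a chain with exactly two uses of $\psi_{n-1}$ projects to a closed walk of length~$2$ in the quotient graph of level-$(n-2)$ classes of $\SST(\delta)$; you must rule out that such a walk can be non-backtracking in the cover while backtracking in the base.

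The paper closes this gap with a cleaner maneuver: given $[T]_{\leq n-2}\neq[U']_{\leq n-2}$ both mapping to the same $\SST(\gamma)$ with the same embedding, condition~(iii) furnishes a path with at most two uses of $\psi_{n-1}$; take $V$ on this path so that $V$ reaches each of $T$ and $U'$ with at most one use of $\psi_{n-1}$. Because $f$ is a bijection on $[V]_{\leq n-2}$ and intertwines $\psi_{n-1}$ with $d_{n-1}$, the level-$(n-2)$ classes adjacent to $[V]_{\leq n-2}$ via $\psi_{n-1}$ correspond bijectively to those adjacent to $[f(V)]_{\leq n-2}$ via $d_{n-1}$; in $\SST(\delta)$ the latter are labeled by distinct pairs $(\epsilon,\text{embedding})$, so at most one neighbor of $[V]_{\leq n-2}$ can map to $(\gamma,\text{embedding})$. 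Hence $[T]_{\leq n-2}=[U']_{\leq n-2}$. This is exactly the unique-edge-lifting property of a covering map, which is the missing ingredient in your argument; once you insert it, your proof coincides with the paper's.
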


\begin{proof}
  We proceed by induction on $n$, noting that condition (i) of Definition~\ref{def:queer-deg} makes the result immediate for $n \leq 4$. Assume then that $n \geq 5$ and the result holds for $n-1$. For simplicity, we may assume that $\mathcal{A}$ consists of a single queer dual equivalence class. Each restricted equivalence class for $\psi_0,\psi_2,\ldots,\psi_{n-2}$ also satisfies Definition~\ref{def:queer-deg} for $\Des$ restricted to $[n-2]$, so by induction, for each restricted equivalence class there is a bijection to signed standard tableaux. Therefore Theorem~\ref{thm:cover} applies and this bijection lifts to a descent-preserving map from $\mathcal{A}$ to $\SST(\delta)$ for some strict partition $\delta$ that intertwines $\psi_i$ with $d_i$. We must show this lift, which we call $f$, is a bijection.

  For surjectivity, take any $T \in \mathcal{A}$ and any $S\in\SST(\delta)$. Then $f(T)\in\SST(\delta)$, so by Theorem~\ref{thm:shape}, there exists a sequence of queer dual equivalence involutions taking $f(T)$ to $S$,, say $S = d_{i_k} \cdots d_{i_1} (f(T))$. By the intertwining property, we have $f( \psi_{i_k} \cdots \psi_{i_1} (T) ) = d_{i_k} \cdots d_{i_1} (f(T)) = S$. Thus $S$ is in the image of $f$.

  For injectivity, we will show that for each strict partition $\gamma \subset \delta$, there is a unique restricted equivalence class $[T]_{\leq n-2}$ such that $f([T]_{\leq n-2}) = \SST(\gamma)$ for each embedding of $\SST(\gamma)$ into $\SST(\delta)$. Suppose $U\in\mathcal{A}$ such that $f([U]_{\leq n-2}) = \SST(\gamma)$ for the same embedding. Then condition (iii) of Definition~\ref{def:queer-deg} ensures that there is there is a sequence of indices $0 \leq i_1, i_2,\ldots,i_j< n$ at most two of which are $n-1$ such that $T = \psi_{i_j} \cdots \psi_{i_2} \psi_{i_1} U$. This means there exists some $V\in\mathcal{A}$ such that both $T$ and $U$ are reachable by applying at most one $\psi_{n-1}$. However, since $f$ intertwines $\psi_i$ for all $i$, the restricted class $[V]_{\leq n-2}$ is connected via $\psi_{n-1}$ to a unique restricted class that maps to $\SST(\epsilon)$ for each $\epsilon\subset\delta$ (together with an embedding). Therefore $U \in [T]_{\leq n-2}$, and so $f$ is injective as well.

  Thus we have a descent-preserving bijection from $\mathcal{A}$ to $\SST(\delta)$. Taking generating functions, the theorem follows.
\end{proof}

Therefore queer dual equivalence gives a universal method for establishing Schur $P$-positivity of a quasisymmetric function expressed nonnegatively in the fundamental basis.

%
\section{Products of Schur $P$-functions}
%

Stembridge \cite{Ste89} extended shifted insertion \cite{Sag87,Wor84} in his study of projective representations of the symmetric group to prove that Schur $P$-functions have nonnegative structure constants.

\begin{theorem}[\cite{Ste89}]
  For $\gamma,\delta,\varepsilon$ strict partitions, there exist nonnegative integers $f_{\gamma,\delta}^{\varepsilon}$ such that
    \begin{equation}
      P_{\gamma} (X) P_{\delta} (X) \ = \ \sum_{\varepsilon} f_{\gamma,\delta}^{\varepsilon} P_{\varepsilon}(X) .
      \label{e:struct_P}
    \end{equation}
\label{thm:struct_P}
\end{theorem}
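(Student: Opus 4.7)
The strategy is to invoke Theorem~\ref{thm:queer-deg}: we build a finite set $\mathcal{A}$ with descent map $\Des$ whose fundamental quasisymmetric generating function equals $P_\gamma(X) P_\delta(X)$, and then exhibit involutions $\psi_0, \{\psi_i\}_{1<i<n}$ satisfying Definition~\ref{def:queer-deg}. Once this is done, Theorem~\ref{thm:queer-deg} forces each queer dual equivalence class to contribute a single $P_\varepsilon$, proving nonnegativity of the structure constants.

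\textbf{Step 1 (Combinatorial model).} Let $n=|\gamma|+|\delta|$. Take
\[ \mathcal{A} \ = \ \bigsqcup_{A \subseteq [n],\, |A|=|\gamma|} \SST(\gamma;A) \times \SST(\delta; [n]\setminus A), \]
where $\SST(\lambda;B)$ denotes signed standard tableaux of shape $\lambda$ filled by the labels of $B$ under their induced order, still subject to the row, column, and diagonal conditions of Section~\ref{sec:deg}. Define $\Des(S,T)\subseteq[n-1]$ via the hook reading word of the shuffled pair, treating entries in $S$ and $T$ as if labeled by their values in $[n]$.

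\textbf{Step 2 (Generating function).} Use the shuffle identity $F_{D}\cdot F_{D'} = \sum_{w} F_{\Des(w)}(X)$, summed over shuffles $w$ of a word with descent set $D$ and a word with descent set $D'$, together with Definition~\ref{def:schurP} applied to each factor. This gives
\[ P_{\gamma}(X)\,P_{\delta}(X) \ = \ \sum_{(S,T)\in\mathcal{A}} F_{\Des(S,T)}(X). \]

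\textbf{Step 3 (Queer involutions on pairs).} Extend $d_0$ and $\{d_i\}$ from Section~\ref{sec:queer} to involutions on $\mathcal{A}$. The queer involution $\psi_0$ toggles the sign on the entry with absolute value $2$ wherever it lies; this is well defined because a $2$ in either tableau sits in the first row and cannot lie on its main diagonal. For $i\geq 2$, if the entries with absolute values $i-1,i,i+1$ all sit in $S$ (resp.\ $T$), then $\psi_i$ applies $d_i$ there in the sense of Definition~\ref{def:deg_shifted}. Otherwise, $\psi_i$ is prescribed by a case analysis on which of the three entries land in which tableau, designed to preserve $\Des \setminus\{i-1,i\}$ and toggle $\Des \cap\{i-1,i\}$ whenever appropriate, paralleling the standard cross-tableau rule for ordinary dual equivalence on products.

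\textbf{Step 4 (Verification of the axioms).} Condition (ii) of Definition~\ref{def:queer-deg} is immediate: for $i>3$, the entries acted on by $\psi_i$ are disjoint from the entry $2$ acted on by $\psi_0$, so the two commute. Condition (i) reduces to checking restricted classes on at most four consecutive values; these can only involve a single tableau or a split across a two-row pair, and the claim follows from Theorem~\ref{thm:shape} and Lemmas~\ref{lem:unique-3}, \ref{lem:unique-4} applied to each restricted sub-pair. Condition (iii) is established by extending the argument of Corollary~\ref{cor:two-edge}: given two pairs $(S,T), (S',T')$ in the same restricted class, one uses inductively the two-edge property within each tableau together with at most one additional application of $\psi_{k-1}$ to synchronize which tableau holds the value $k$ and its sign.

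\textbf{Main obstacle.} The principal difficulty lies in Step 3: giving the correct rule for $\psi_i$ in every mixed case where $\{i-1,i,i+1\}$ straddle the two tableaux, and in Step 4 verifying the global axiom (iii), which is the non-local axiom highlighted in Section~\ref{sec:queer-deg} as the hard one. The case analysis must handle not only positions but also the interaction of signs and the main-diagonal restriction in both $\gamma$ and $\delta$, and it must produce sequences with at most two copies of the largest involution $\psi_{n-1}$. Once these rules are in place, however, the remaining ingredients assemble cleanly, and Theorem~\ref{thm:queer-deg} delivers Equation~\eqref{e:struct_P}.
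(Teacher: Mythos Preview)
Your overall strategy matches the paper's: build a combinatorial model for $P_\gamma P_\delta$ via pairs of shifted tableaux on complementary alphabets, use the shuffle product to get the fundamental expansion, and then equip this set with a queer dual equivalence so that Theorem~\ref{thm:queer-deg} applies. The paper carries this out in Definitions~\ref{def:deg_tensor} and \ref{def:qdeg_tensor} and Theorems~\ref{thm:deg-tensor} and \ref{thm:qdeg-tensor}.

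There is, however, a concrete error in your Step~3. You define $\psi_0$ as ``toggle the sign on the entry with absolute value $2$ wherever it lies,'' and you justify well-definedness by asserting that such an entry ``cannot lie on its main diagonal.'' This is false in the cross-tableau case: if $1\in T$ and $2\in S$, then $2$ is the smallest label appearing in $S$, hence occupies the cell in row $1$, column $1$ of the shifted diagram for $\gamma$, which \emph{is} on the main diagonal of $S$. That cell cannot carry a sign, so your $\psi_0$ is not defined there. The paper's Definition~\ref{def:qdeg_tensor} handles exactly this situation with a different rule: when $1$ and $2$ lie in different tableaux, $\psi_0$ \emph{swaps} $1$ and $2$ rather than toggling a sign. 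This swap is well-defined (both entries are diagonal, hence unsigned) and correctly toggles whether $1\in\Des$.

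Once $\psi_0$ is fixed in this way, your Step~4 sketch is in line with the paper's arguments: commutation with $\psi_i$ for $i>3$ is clear from disjoint supports; condition~(i) is a finite check (the paper appeals to the explicit graphs in Figures~\ref{fig:P-product} and \ref{fig:P-product2} and the analogous ones for $(3)\otimes(1)$ and $(1)\otimes(3)$); and condition~(iii) is proved by induction as in the paper's Theorem~\ref{thm:qdeg-tensor}, using at most one $\psi_{n-1}$ within a dual equivalence class and at most one more to transfer the largest entry between factors. Your vague description of the mixed-case rule for $\psi_i$ should also be made precise; the paper's rule (Definition~\ref{def:deg_tensor}(4)) is simply to swap the absolute values of the two letters furthest apart in the concatenated hook reading word, keeping signs fixed in their cells.
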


Define Schur $Q$ functions in terms of Schur $P$-functions by 
\begin{equation}
  Q_{\gamma} (X) \ = \ 2^{\ell(\gamma)} P_{\gamma}(X) .
\label{e:schur_Q}
\end{equation}
Then Schur $Q$ and $P$-functions form dual bases and the operation of skewing is adjoint to multiplication
\cite{Mac95}, the integers $f_{\gamma,\delta}^{\varepsilon}$ also satisfy
\begin{equation}
  Q_{\varepsilon/\gamma}(X) = \sum_{\delta} f_{\gamma,\delta}^{\varepsilon} Q_{\delta}(X). 
  \label{e:struct_Q}
\end{equation}
Shifted dual equivalence \cite{Ass18} gives another proof of positivity of Schur $P$-structure constants in Eq.~\eqref{e:struct_P} by instead considering the formulation for skew Schur $Q$ functions in Eq.~\eqref{e:struct_Q}. This happens because shifted dual equivalence is more naturally about Schur $Q$ functions than Schur $P$-functions. In contrast, we use queer dual equivalence to give a direct formula for Eq.~\eqref{e:struct_P}.

To begin, given strict partitions $\gamma,\delta$, we consider the concatenated shape $\gamma \otimes \delta$ with $\gamma$ written left of $\delta$ and rows aligned at the bottom. A \newword{signed standard tableau of shape $\gamma \otimes \delta$} is a bijective filling of $\gamma \otimes \delta$ with a signed permutation such that no signed entries appear on \emph{either} main diagonal; e.g. see Fig.~\ref{fig:tensor}. We denote signed standard tableaux of concatenated shape $\gamma \otimes \delta$ by $S\otimes T$, where $S$ has shape $\gamma$ and $T$ has shape $\delta$. Note neither $S$ nor $T$ is a signed standard tableau. 

\begin{figure}[ht]
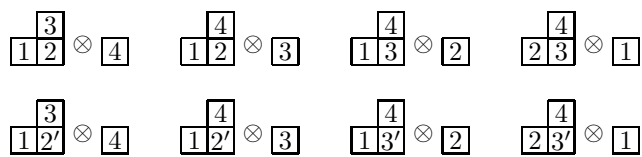

  \begin{center}
    \begin{displaymath}
      \begin{array}{c@{\hskip 2\cellsize}c@{\hskip 2\cellsize}c@{\hskip 2\cellsize}c}
      \tableau{& 3 \\ 1 & 2     }\ \raisebox{-0.5\cellsize}{$\otimes$}\ \tableau{ \\ 4} &
      \tableau{& 4 \\ 1 & 2     }\ \raisebox{-0.5\cellsize}{$\otimes$}\ \tableau{ \\ 3} &
      \tableau{& 4 \\ 1 & 3     }\ \raisebox{-0.5\cellsize}{$\otimes$}\ \tableau{ \\ 2} &
      \tableau{& 4 \\ 2 & 3     }\ \raisebox{-0.5\cellsize}{$\otimes$}\ \tableau{ \\ 1} \\[2\cellsize]
      \tableau{& 3 \\ 1 & \st{2}}\ \raisebox{-0.5\cellsize}{$\otimes$}\ \tableau{ \\ 4} &
      \tableau{& 4 \\ 1 & \st{2}}\ \raisebox{-0.5\cellsize}{$\otimes$}\ \tableau{ \\ 3} &
      \tableau{& 4 \\ 1 & \st{3}}\ \raisebox{-0.5\cellsize}{$\otimes$}\ \tableau{ \\ 2} &
      \tableau{& 4 \\ 2 & \st{3}}\ \raisebox{-0.5\cellsize}{$\otimes$}\ \tableau{ \\ 1} 
      \end{array}
    \end{displaymath}
    \caption{\label{fig:tensor}The signed standard tableaux of shape $(2,1)\otimes(1)$.} 
  \end{center}
\end{figure}

We extend the \newword{descent set} to concatenated tableaux $S\otimes T\in \SST(\gamma \otimes \delta)$ via the concatenation of hook reading words $\hook(S)\hook(T)$. 

Using Stanley's fundamental theorem for $P$-partitions \cite{Sta72}, Gessel \cite{Ges84} proved that the structure constants for the fundamental quasisymmetric functions are given by the \newword{shuffle product}. 

\begin{proposition}[\cite{Ges84}]
  Given $A \subseteq[a-1]$ and $B\subseteq[b-1]$ and $\alpha,\beta$ any two words of lengths $a,b$, respectively, in \emph{disjoint} letters such that $\Des(\alpha)=A$ and $\Des(\beta)=B$, we have
  \begin{equation}
    F_{A} F_{B} = \sum_{\omega \in \alpha \shuffle \beta} F_{\Des(\omega)} ,
    \label{e:shuffle}
  \end{equation}
  where $\alpha\shuffle\beta$ is the set of words of length $a+b$ such that both $\alpha$ and $\beta$ appear as subwords.
  \label{prop:shuffle}
\end{proposition}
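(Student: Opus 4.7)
The plan is to deduce the shuffle identity from Stanley's fundamental theorem for $P$-partitions \cite{Sta72}, which is the original route taken by Gessel. Recall that this theorem states that for any labeled poset $(P, \omega)$ on $n$ elements, the generating function $\sum_{f} x^{f}$ summed over all $P$-partitions $f$ equals $\sum_{\sigma \in \mathcal{L}(P,\omega)} F_{\Des(\sigma)}(X)$, where $\mathcal{L}(P,\omega)$ is the set of linear extensions, each read out as a word in the labels. Throughout I would rely on the fact that both the $P$-partition conditions and the descent set of a word depend only on the relative order of the labels, so without loss of generality we may assume the disjoint letters of $\alpha$ and $\beta$ together form $[a+b]$.

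First I would build a labeled poset that realizes $F_{A}$: take the chain $x_{1} < x_{2} < \cdots < x_{a}$ with labeling $\omega_{\alpha}(x_{j}) = \alpha_{j}$. Because this poset is a chain, its unique linear extension reads out as the word $\alpha$, whose descent set is $A$. Hence Stanley's theorem gives $F_{A}(X)$ as the $P$-partition generating function of $(C_{\alpha}, \omega_{\alpha})$. Analogously, build a labeled chain $(C_{\beta}, \omega_{\beta})$ whose $P$-partition generating function is $F_{B}(X)$.

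Next I would form the disjoint union $P = C_{\alpha} \sqcup C_{\beta}$, which inherits a bijective labeling since $\alpha$ and $\beta$ use disjoint letters. A $P$-partition of the disjoint union is simply a pair of $P$-partitions, one on each chain, and because the two label sets are disjoint the variables decouple, so the generating function of $P$ factors as
\[
\sum_{f} x^{f} \ = \ F_{A}(X) \cdot F_{B}(X).
\]
On the other hand, a linear extension of $P$ is a total ordering extending both chain orders, and reading out the labels produces a word of length $a+b$ in which $\alpha$ and $\beta$ both appear as subwords, i.e.\ precisely an element $\omega \in \alpha \shuffle \beta$. The descent set of this linear extension in the $P$-partition sense coincides with $\Des(\omega)$ as defined in Eq.~\eqref{e:des}, since both compare adjacent label values. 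Applying Stanley's theorem to $P$ yields the right-hand side of Eq.~\eqref{e:shuffle}, and equating the two expressions for the generating function of $P$-partitions gives the identity.

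The main thing to watch is the bookkeeping of what ``descent of a linear extension'' means when the labels do not form $[a+b]$; but this is resolved by the order-preserving relabeling remark above, so the proof reduces to packaging these three ingredients — chain-gives-$F_{D}$, disjoint-union-multiplies, linear-extensions-are-shuffles — together with Stanley's theorem.
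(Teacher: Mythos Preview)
Your argument is correct and follows exactly the approach the paper attributes to Gessel: the paper does not give its own proof but simply cites \cite{Ges84}, noting that the result follows from Stanley's fundamental theorem for $P$-partitions. Your proposal spells out precisely that derivation---realize each $F_D$ as the $P$-partition generating function of a labeled chain, take the disjoint union to multiply, and identify linear extensions with shuffles---so there is nothing to add or compare.
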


Proposition~\ref{prop:shuffle} allows us to compute the fundamental expansion of a product of Schur $P$-functions as follows.

\begin{corollary}
  For $\gamma,\delta$ strict partitions, we have
    \begin{equation}
      P_{\gamma} (X) P_{\delta} (X) \ = \ \sum_{S\otimes T \in \SST(\gamma \otimes \delta)} F_{\Des(S)}(X) .
      \label{e:PPF}
    \end{equation}
    \label{cor:PPF}
\end{corollary}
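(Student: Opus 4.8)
The plan is to expand both factors using Definition~\ref{def:schurP}, multiply, and recognize the resulting products of fundamental quasisymmetric functions through Gessel's shuffle identity (Proposition~\ref{prop:shuffle}). Writing $a=|\gamma|$ and $b=|\delta|$, Definition~\ref{def:schurP} gives
\[ P_{\gamma}(X)\,P_{\delta}(X) = \sum_{S'\in\SST(\gamma)}\ \sum_{T'\in\SST(\delta)} F_{\Des(\hook(S'))}(X)\, F_{\Des(\hook(T'))}(X). \]
For each fixed pair $(S',T')$ I would apply Proposition~\ref{prop:shuffle} with $\alpha=\hook(S')$ on the letters $\{1,\dots,a\}$ and $\beta$ equal to $\hook(T')$ with every letter increased by $a$, so that $\alpha$ and $\beta$ use disjoint letters while $\Des(\alpha)=\Des(\hook(S'))$ and $\Des(\beta)=\Des(\hook(T'))$. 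This rewrites each summand as $\sum_{\omega\in\alpha\shuffle\beta}F_{\Des(\omega)}(X)$, so the entire product becomes a sum of $F_{\Des(\omega)}$ over triples $(S',T',\omega)$.

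Next I would set up a bijection with $\SST(\gamma\otimes\delta)$ by standardization. A tableau $S\otimes T\in\SST(\gamma\otimes\delta)$ is a bijective filling by a signed permutation of $[a+b]$; the entries occupying the $\gamma$-cells form an $a$-subset $A\subseteq[a+b]$, and the order-preserving relabelling of $A$ (respectively of $[a+b]\setminus A$) that retains all signs yields a genuine signed standard tableau $S'\in\SST(\gamma)$ (respectively $T'\in\SST(\delta)$), since the row, column, and diagonal conditions hold on each component separately and are preserved. Conversely the pair $(S',T')$ together with the choice of $A$ — equivalently the interleaving pattern recorded by a shuffle $\omega\in\alpha\shuffle\beta$ — reconstructs $S\otimes T$. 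The cardinalities agree, as there are $\binom{a+b}{a}$ subsets $A$ and $\binom{a+b}{a}$ shuffles, so this is a bijection between the triples $(S',T',\omega)$ and $\SST(\gamma\otimes\delta)$.

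The crux is to verify that the bijection preserves descents, that is, that $\Des(\hook(S)\hook(T))$ equals $\Des(\omega)$ for the matched shuffle. Here the reading word $\hook(S)\hook(T)$ of the concatenated tableau is, by construction, the subword $\hook(S)$ on the alphabet $A$ followed by the subword $\hook(T)$ on the complementary alphabet, and these two subwords are order isomorphic to $\hook(S')$ and $\hook(T')$; thus $\hook(S)\hook(T)$ is itself a shuffle of two words in disjoint letters whose descent sets are $\Des(\hook(S'))$ and $\Des(\hook(T'))$, which is exactly the scope of Proposition~\ref{prop:shuffle}. The substance of the matching step is to identify, as $A$ ranges over all $a$-subsets, the multiset of descent sets of these concatenations with the multiset produced by $\alpha\shuffle\beta$. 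Concretely I expect to pass through inverse words, using that the descent convention ``$i$ to the right of $i+1$'' makes $\Des$ of a word equal to the positional descent set of its inverse; under this passage the ``which values lie in $A$'' data of $\hook(S)\hook(T)$ corresponds to the ``which positions carry small letters'' data of $\omega$, and the descent sets line up.

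I expect this descent bookkeeping to be the only genuine obstacle; the standardization bijection itself and the counting are routine. Once the descent-matching is established, summing $F_{\Des}$ over the bijection converts the shuffle expansion of $P_{\gamma}P_{\delta}$ into $\sum_{S\otimes T\in\SST(\gamma\otimes\delta)}F_{\Des(\hook(S)\hook(T))}(X)$, which is Equation~\eqref{e:PPF}. This mirrors the classical derivation of $s_{\lambda}s_{\mu}$ as a sum over concatenated standard Young tableaux, the shifted and signed features entering only through the presence of two main diagonals and the sign-preserving standardization.
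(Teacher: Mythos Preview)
Your approach is correct and is essentially what the paper intends: the paper states the corollary as an immediate consequence of Proposition~\ref{prop:shuffle} without further argument, so your expansion---Definition~\ref{def:schurP} on each factor, Gessel's shuffle identity, then a standardization bijection with $\SST(\gamma\otimes\delta)$---is exactly the omitted justification.

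One point worth making precise is the one you flag as ``the only genuine obstacle.'' With the paper's convention $\Des(w)=\{i:\text{$i$ right of $i{+}1$}\}$ (the recoil set), the correct words to feed into Proposition~\ref{prop:shuffle} are $\alpha=\hook(S')^{-1}$ and $\beta=(\hook(T')^{-1})^{+a}$, not $\hook(S')$ and the shifted $\hook(T')$: the shuffle identity is naturally stated for positional descents, and $\Des(w)$ equals the positional descent set of $w^{-1}$. With that choice, the map $S\otimes T\mapsto \omega:=\bigl(\hook(S)\hook(T)\bigr)^{-1}$ is the desired bijection: one checks directly that the subword of $\omega$ on $\{1,\dots,a\}$ is $\hook(S')^{-1}$ (and similarly for the large letters), so $\omega\in\alpha\shuffle\beta$, and $\Des(\hook(S)\hook(T))=\text{des}(\omega)$ on the nose. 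This is precisely the ``pass through inverse words'' you anticipate, and it completes the argument without any multiset-matching.
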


For example, from Fig.~\ref{fig:tensor}, we may compute
\begin{eqnarray*}
P_{(2,1)} P_{(1)} & = & F_{\{1\}} + 2 F_{\{2\}} + F_{\{3\}} + F_{\{1,2\}} + 2 F_{\{1,3\}} + F_{\{2,3\}} .
\end{eqnarray*}

We now have the correct set-up to apply the dual equivalence machinery.

\begin{definition}
  Let $S\otimes T$ be a signed standard tableau of concatenated shape. For $1<i<n$, define the dual equivalence involutions $\psi_i$ by the rule:
  \begin{enumerate}
  \item if $i$ lies between $i-1$ and $i+1$ in $\hook(S\otimes T)$, then $\psi_i(S\otimes T) = S\otimes T$;
  \item else if $i-1, i, i+1\in S$, then $\psi_i(S\otimes T) = d_i(S)\otimes T$;
  \item else if $i-1, i, i+1\in T$, then $\psi_i(S\otimes T) = S\otimes d_i(T)$;
  \item else swap the absolute values of letters furthest apart in the hook reading word, maintaining the signs in the given cells.
  \end{enumerate}
  \label{def:deg_tensor}
\end{definition}

\begin{figure}[ht]
  \begin{center}
    \begin{tikzpicture}[xscale=2.15,yscale=1.5,
        label/.style={%
          postaction={ decorate,
            decoration={ markings, mark=at position 0.5 with \node #1;}}}]
      \node at (0,2)  (A1) {$\tableau{& 3 \\ 1 & \st{2}}\ \raisebox{-0.5\cellsize}{$\otimes$}\ \tableau{ \\ 4}$};
      \node at (1,2)  (A2) {$\tableau{& 3 \\ 1 & 2     }\ \raisebox{-0.5\cellsize}{$\otimes$}\ \tableau{ \\ 4}$};
      \node at (2,2)  (A3) {$\tableau{& 4 \\ 1 & 2     }\ \raisebox{-0.5\cellsize}{$\otimes$}\ \tableau{ \\ 3}$};
      \node at (1.5,1)(B1) {$\tableau{& 4 \\ 1 & \st{2}}\ \raisebox{-0.5\cellsize}{$\otimes$}\ \tableau{ \\ 3}$};
      \node at (0.5,1)(B2) {$\tableau{& 4 \\ 1 & \st{3}}\ \raisebox{-0.5\cellsize}{$\otimes$}\ \tableau{ \\ 2}$};
      \node at (0,0)  (C1) {$\tableau{& 4 \\ 2 & \st{3}}\ \raisebox{-0.5\cellsize}{$\otimes$}\ \tableau{ \\ 1}$};
      \node at (1,0)  (C2) {$\tableau{& 4 \\ 2 & 3     }\ \raisebox{-0.5\cellsize}{$\otimes$}\ \tableau{ \\ 1}$};
      \node at (2,0)  (C3) {$\tableau{& 4 \\ 1 & 3     }\ \raisebox{-0.5\cellsize}{$\otimes$}\ \tableau{ \\ 2}$};
      \draw[thick,color=red   ,label={[above]{$\psi_{2}$}}](A1.05) -- (A2.175) ;
      \draw[thick,color=blue,label={[above]{$\psi_{3}$}}]  (A2)    -- (A3) ;
      \draw[thick,color=red   ,label={[above]{$\psi_{2}$}}](B2.05) -- (B1.175) ;
      \draw[thick,color=blue,label={[below]{$\psi_{3}$}}]  (B2.355)-- (B1.185) ;
      \draw[thick,color=blue,label={[above]{$\psi_{3}$}}]  (C1)    -- (C2) ;
      \draw[thick,color=red   ,label={[above]{$\psi_{2}$}}](C2.05) -- (C3.175) ;
      \draw[thick,color=violet,label={[below]{$\psi_{0}$}}](A1.355)-- (A2.185) ;
      \draw[thick,color=violet,label={[right]{$\psi_{0}$}}](A3)    -- (B1) ;
      \draw[thick,color=violet,label={[left ]{$\psi_{0}$}}](B2)    -- (C1) ;
      \draw[thick,color=violet,label={[below]{$\psi_{0}$}}](C2.355)-- (C3.185) ;
      \node at (3,2)  (a1) {$\tableau{\\ 2}\ \raisebox{-0.5\cellsize}{$\otimes$}\ \tableau{& 4 \\ 1 & \st{3}}$};
      \node at (4,2)  (a2) {$\tableau{\\ 1}\ \raisebox{-0.5\cellsize}{$\otimes$}\ \tableau{& 4 \\ 2 & \st{3}}$};
      \node at (5,2)  (a3) {$\tableau{\\ 1}\ \raisebox{-0.5\cellsize}{$\otimes$}\ \tableau{& 4 \\ 2 & 3}$};
      \node at (4.5,1)(b1) {$\tableau{\\ 2}\ \raisebox{-0.5\cellsize}{$\otimes$}\ \tableau{& 4 \\ 1 & 3}$};
      \node at (3.5,1)(b2) {$\tableau{\\ 3}\ \raisebox{-0.5\cellsize}{$\otimes$}\ \tableau{& 4 \\ 1 & 2}$};
      \node at (3,0)  (c1) {$\tableau{\\ 3}\ \raisebox{-0.5\cellsize}{$\otimes$}\ \tableau{& 4 \\ 1 & \st{2}}$};
      \node at (4,0)  (c2) {$\tableau{\\ 4}\ \raisebox{-0.5\cellsize}{$\otimes$}\ \tableau{& 3 \\ 1 & \st{2}}$};
      \node at (5,0)  (c3) {$\tableau{\\ 4}\ \raisebox{-0.5\cellsize}{$\otimes$}\ \tableau{& 3 \\ 1 & 2}$};
      \draw[thick,color=red   ,label={[above]{$\psi_{2}$}}](a1.05) -- (a2.175) ;
      \draw[thick,color=blue,label={[above]{$\psi_{3}$}}]  (a2)    -- (a3) ;
      \draw[thick,color=red   ,label={[above]{$\psi_{2}$}}](b2.05) -- (b1.175) ;
      \draw[thick,color=blue,label={[below]{$\psi_{3}$}}]  (b2.355)-- (b1.185) ;
      \draw[thick,color=blue,label={[above]{$\psi_{3}$}}]  (c1)    -- (c2) ;
      \draw[thick,color=red   ,label={[above]{$\psi_{2}$}}](c2.05) -- (c3.175) ;
      \draw[thick,color=violet,label={[below]{$\psi_{0}$}}](a1.355)-- (a2.185) ;
      \draw[thick,color=violet,label={[right]{$\psi_{0}$}}](a3)    -- (b1) ;
      \draw[thick,color=violet,label={[left ]{$\psi_{0}$}}](b2)    -- (c1) ;
      \draw[thick,color=violet,label={[below]{$\psi_{0}$}}](c2.355)-- (c3.185) ;
    \end{tikzpicture}
    \caption{\label{fig:P-product}Queer dual equivalence for $\SST((2,1)\otimes(1))$ and $\SST((1)\otimes(2,1))$.}
  \end{center}
\end{figure}

\begin{theorem}
  For $\gamma,\delta$ strict partitions of total size $n$, the maps $\psi_i$, for $1<i<n$, give a well-defined dual equivalence for $\SST(\gamma\otimes\delta)$.
  \label{thm:deg-tensor}
\end{theorem}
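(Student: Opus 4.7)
The plan is to verify the two axioms of Definition~\ref{def:deg} directly for the involutions $\psi_i$ on $\SST(\gamma\otimes\delta)$, following the same outline used to establish Theorem~\ref{thm:strong_pos}, but with the hook reading word replaced by the concatenated hook reading word. The first step is to check that each $\psi_i$ is well-defined as an involution. In cases (1)-(3) of Definition~\ref{def:deg_tensor}, $\psi_i$ either acts trivially or restricts to $d_i$ on $S$ (respectively on $T$), so well-definedness is inherited from Definition~\ref{def:deg_shifted}. In case (4), the two letters whose absolute values are swapped are the extreme occurrences (in $\hook(S\otimes T)$) of the set $\{i-1,i+1\}$, and since the hook reading word linearly orders the cells of the tableau, the swap preserves the row-increasing and column-increasing conditions on each side; holding the signs fixed in the given cells ensures no forbidden diagonal marking is introduced. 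The swap is manifestly its own inverse.

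Axiom (ii), commutativity $\psi_j\psi_i = \psi_i\psi_j$ when $|i-j|\geq 3$, is immediate from the locality of $\psi_k$: each $\psi_k$ only modifies cells whose absolute values lie in $\{k-1,k,k+1\}$, so when $|i-j|\geq 3$ the two triples of involved letters are disjoint, the actions take place on disjoint cells, and the involutions commute.

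For axiom (i), I would show that every restricted class $[T]_{(h,i)}$ with $i-h\leq 3$ has generating function $s_{\lambda}$ for a single partition $\lambda$ of $i-h+3$. When all entries of absolute value in $\{h-1,h,\ldots,i+1\}$ lie entirely in $S$ (respectively entirely in $T$), cases (2) and (3) of Definition~\ref{def:deg_tensor} reduce the dynamics of $\psi_h,\ldots,\psi_i$ to that of $d_h,\ldots,d_i$ on $\SST(\gamma)$ (respectively $\SST(\delta)$), so axiom (i) is inherited from Theorem~\ref{thm:strong_pos}. When these small-absolute-value entries split across the cut into $S$ and $T$, case (4) takes over, and the key observation is that the \emph{furthest apart in the hook reading word} prescription is precisely Haiman's original dual equivalence rule applied to the linear arrangement induced by $\hook(S)\hook(T)$. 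This gives a descent-preserving identification of the restricted class with a standard dual equivalence graph on a set of standard Young tableaux of some shape of size $i-h+3$, and Theorem~\ref{thm:deg} then supplies the Schur generating function.

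The main obstacle is the bookkeeping of the mixed case in axiom (i). Because the hook reading word threads through both tableaux and intersperses marked and unmarked entries, one must carefully enumerate the local configurations in which $\{h-1,\ldots,i+1\}$ straddle the concatenation and check that cases (2)-(4) of Definition~\ref{def:deg_tensor} combine correctly, that signs and descents are tracked faithfully, and that every restricted equivalence class matches a single $s_\lambda$ rather than a sum. Since $i-h\leq 3$ bounds the number of entries involved by five, this is a finite case analysis; the work is in verifying a small but nontrivial family of local pictures and confirming that the descents of the concatenated hook reading word on each picture realize exactly the fundamental expansion of the expected Schur polynomial.
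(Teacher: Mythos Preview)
Your proposal is correct and follows essentially the same approach as the paper: verify well-definedness case by case, derive axiom~(ii) from locality of the involved letters, and reduce axiom~(i) to a finite check on configurations of at most six consecutive entries. The only difference is presentational: the paper dispatches axiom~(i) with a single sentence deferring to computer verification, whereas you outline the manual case analysis and explain why case~(4) of Definition~\ref{def:deg_tensor} reduces to Haiman's rule on the concatenated word---a useful observation, though (as you correctly note) the mixed configurations where cases~(2)--(4) interact still require the enumeration you describe.
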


\begin{proof}
  To see that $\psi_i$ is well-defined, note that while neither $S$ nor $T$ is a signed standard tableau, Definition~\ref{def:deg_shifted} can be applied whenever all three entries with absolute values $i-1,i,i+1$ lie in the same tableau. Condition (i) of Definition~\ref{def:deg} is most easily verified by computer, since one needs only check a finite number of cases based on the relative diagonals of up to six consecutive letters. Condition (ii) of Definition~\ref{def:deg} is immediate from the fact that $\psi_i$ considers and affects only the positions of entries with absolute values $i-1,i,i+1$, and $\{i-1,i,i+1\} \cap \{j-1,j,j+1\} = \varnothing$ whenever $|i-j| \geq 3$.   
\end{proof}

Theorem~\ref{thm:deg-tensor} establishes the Schur positivity of a product of Schur $P$-functions. To get Schur $P$-positivity, we need only add a single queer involution.

\begin{definition}
  Let $S\otimes T$ be a signed standard tableau of concatenated shape. Define the queer dual equivalence involution $\psi_0$ by the rule:
  \begin{enumerate}
  \item if $1,2\in S$, then $\psi_0(S\otimes T) = d_0(S)\otimes T$;
  \item else if $1,2\in T$, then $\psi_0(S\otimes T) = S\otimes d_0(T)$;
  \item else swap $1$ and $2$.
  \end{enumerate}
  \label{def:qdeg_tensor}
\end{definition}

For examples of the involutions on concatenated shapes, see Figs.~\ref{fig:P-product} and \ref{fig:P-product2}. Comparing this with Fig.~\ref{fig:qdeg-31}, we see that indeed we have $P_{(2,1)} P_{(1)} = P_{(3,1)}$. Note that while we obviously have $P_{(2,1)} P_{(1)} = P_{(1)} P_{(2,1)}$, the queer dual equivalence structures for $\SST((2,1)\otimes(1))$ and $\SST((1)\otimes(2,1))$ are not in obvious bijection.

\begin{figure}[ht]
  \begin{center}
    \begin{tikzpicture}[xscale=3,yscale=1.5,
        label/.style={%
          postaction={ decorate,
            decoration={ markings, mark=at position 0.5 with \node #1;}}}]
      \node at (0,2.75) (A1) {$\tableau{2 & 3}\ \raisebox{0.3\cellsize}{$\otimes$}\ \tableau{1 & 4}$};
      \node at (1,2.75) (A2) {$\tableau{1 & 3}\ \raisebox{0.3\cellsize}{$\otimes$}\ \tableau{2 & 4}$};
      \node at (2,2.75) (A3) {$\tableau{1 & 4}\ \raisebox{0.3\cellsize}{$\otimes$}\ \tableau{2 & 3}$};
      \node at (3,2.75) (B1) {$\tableau{2 & 4}\ \raisebox{0.3\cellsize}{$\otimes$}\ \tableau{1 & 3}$};
      \node at (3,2)    (B2) {$\tableau{3 & 4}\ \raisebox{0.3\cellsize}{$\otimes$}\ \tableau{1 & 2}$};
      \node at (2,2)    (C1) {$\tableau{3 & 4}\ \raisebox{0.3\cellsize}{$\otimes$}\ \tableau{1 & \st{2}}$};
      \node at (1,2)    (C2) {$\tableau{2 & 4}\ \raisebox{0.3\cellsize}{$\otimes$}\ \tableau{1 & \st{3}}$};
      \node at (0,2)    (C3) {$\tableau{1 & 4}\ \raisebox{0.3\cellsize}{$\otimes$}\ \tableau{2 & \st{3}}$};
      \draw[thick,color=red   ,label={[above]{$\psi_{2}$}}](A1.05) -- (A2.175) ;
      \draw[thick,color=blue,label={[above]{$\psi_{3}$}}]  (A2)    -- (A3) ;
      \draw[thick,color=red   ,label={[left]{$\psi_{2}$}}](B2.100) -- (B1.260) ;
      \draw[thick,color=blue,label={[right]{$\psi_{3}$}}]  (B2.80)-- (B1.280) ;
      \draw[thick,color=blue,label={[above]{$\psi_{3}$}}]  (C1)    -- (C2) ;
      \draw[thick,color=red   ,label={[above]{$\psi_{2}$}}](C3.05) -- (C2.175) ;
      \draw[thick,color=violet,label={[below]{$\psi_{0}$}}](A1.355)-- (A2.185) ;
      \draw[thick,color=violet,label={[above]{$\psi_{0}$}}](A3)    -- (B1) ;
      \draw[thick,color=violet,label={[above]{$\psi_{0}$}}](B2)    -- (C1) ;
      \draw[thick,color=violet,label={[below]{$\psi_{0}$}}](C3.355)-- (C2.185) ;
      \node at (0,0.25) (T1) {$\tableau{1 & 2}\ \raisebox{0.3\cellsize}{$\otimes$}\ \tableau{3 & 4}$};
      \node at (0,1)    (T2) {$\tableau{1 & \st{2}}\ \raisebox{0.3\cellsize}{$\otimes$}\ \tableau{3 & 4}$};
      \node at (1,1)    (T3) {$\tableau{1 & \st{3}}\ \raisebox{0.3\cellsize}{$\otimes$}\ \tableau{2 & 4}$};
      \node at (2,1)    (T4) {$\tableau{1 & \st{4}}\ \raisebox{0.3\cellsize}{$\otimes$}\ \tableau{2 & 3}$};
      \node at (1,0.25) (U3) {$\tableau{2 & \st{3}}\ \raisebox{0.3\cellsize}{$\otimes$}\ \tableau{1 & 4}$};
      \node at (2,0.25) (U4) {$\tableau{2 & \st{4}}\ \raisebox{0.3\cellsize}{$\otimes$}\ \tableau{1 & 3}$};
      \node at (3,0.25) (U5) {$\tableau{3 & \st{4}}\ \raisebox{0.3\cellsize}{$\otimes$}\ \tableau{1 & 2}$};
      \node at (3,1)    (U6) {$\tableau{3 & \st{4}}\ \raisebox{0.3\cellsize}{$\otimes$}\ \tableau{1 & \st{2}}$};
      \draw[thick,color=red   ,label={[above]{$\psi_{2}$}}](T2) -- (T3) ;
      \draw[thick,color=blue,label={[above]{$\psi_{3}$}}]  (T3) -- (T4) ;
      \draw[thick,color=blue,label={[above]{$\psi_{3}$}}]  (U3) -- (U4) ;
      \draw[thick,color=red   ,label={[above]{$\psi_{2}$}}](U4) -- (U5) ;
      \draw[thick,color=violet,label={[left]{$\psi_{0}$}}](T1) -- (T2) ;
      \draw[thick,color=violet,label={[left]{$\psi_{0}$}}] (T3) -- (U3) ;
      \draw[thick,color=violet,label={[right]{$\psi_{0}$}}](T4) -- (U4) ;
      \draw[thick,color=violet,label={[right]{$\psi_{0}$}}](U5) -- (U6) ;
    \end{tikzpicture}
    \caption{\label{fig:P-product2}Queer dual equivalence for $\SST((2)\otimes(2))$.}
  \end{center}
\end{figure}

\begin{theorem}
  For $\gamma,\delta$ strict partitions of total size $n$, the maps $\psi_i$, for $i=0,2,\ldots,n-1$, give a well-defined queer dual equivalence for $\SST(\gamma\otimes\delta)$. 
  \label{thm:qdeg-tensor}
\end{theorem}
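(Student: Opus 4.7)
The plan is to verify the three conditions of Definition~\ref{def:queer-deg} in turn, leveraging Theorem~\ref{thm:deg-tensor} for the dual equivalence portion $\{\psi_i\}_{1 < i < n}$.

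Condition (ii) should be immediate from inspection of the rules: $\psi_0$ alters only entries with absolute values in $\{1,2\}$, while $\psi_i$ for $i > 3$ alters only entries with absolute values in $\{i-1,i,i+1\} \subset \{3,4,\ldots\}$. Moreover, $\psi_0$ does not move any entry of absolute value $\geq 3$ between $S$ and $T$, and $\psi_i$ does not move any entry of absolute value $\leq 2$, so the case selectors in Definitions~\ref{def:deg_tensor} and \ref{def:qdeg_tensor} are preserved by the other involution. Hence $\psi_0 \psi_i = \psi_i \psi_0$.

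Condition (i) reduces to a finite case analysis. For $i \in \{1,2,3\}$, the restricted class $[S\otimes T]_{\leq i}$ depends only on the entries of absolute value at most $i+1$, together with their distribution between $S$ and $T$. For each such configuration (of which there are only finitely many, given the shape constraints on $\gamma$ and $\delta$), the restricted generating function can be computed directly and checked to equal $P_\epsilon$ for some strict partition $\epsilon$ of size $i+1$. The base structures parallel those verified in Lemmas~\ref{lem:unique-2}, \ref{lem:unique-3}, and \ref{lem:unique-4}, and the cases in which all three relevant entries lie in a single component reduce immediately to the corresponding cases for signed standard tableaux via Theorem~\ref{thm:good-def}.

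The main obstacle is condition (iii), which asks for a tensor-product analog of Corollary~\ref{cor:two-edge}: any two elements in the same restricted class can be connected by an involution sequence with at most two uses of the top involution. The plan is to mirror the inductive proof of Theorem~\ref{thm:shape}, inducting on the size of the restricted class and tracking the cell of $\gamma \otimes \delta$ occupied by the entry of largest absolute value in each tableau. One splits into Case A (same cell and compatible sign in both tableaux, where the argument reduces to induction after a short detour involving the top involution acting on nearby entries) and Case B (different cells, where a single top-level involution transitions between the two configurations). The new complication in the tensor setting is that the cell of the largest entry may lie in either component of $\gamma \otimes \delta$, and the auxiliary cells needed for reductions (a nearest removable corner, a cell immediately adjacent, etc.) may straddle the tensor boundary. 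This introduces additional subcases based on how the entries with absolute values $k-1, k, k+1$ distribute between $S$ and $T$; I expect the hardest of these to be when exactly one of $\{k-1, k, k+1\}$ lies on the opposite side from the other two, so that the top involution falls into case (4) of Definition~\ref{def:deg_tensor} and the reduction requires a globally compatible choice of auxiliary tableau. Once these subcases are handled, the argument runs as in Theorem~\ref{thm:shape} and condition (iii) follows.
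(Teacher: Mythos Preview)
Your treatment of conditions (i) and (ii) aligns with the paper's: (ii) follows from the disjointness of the entries touched by $\psi_0$ and by $\psi_i$ for $i>3$, and (i) is a finite verification, which the paper carries out by direct inspection of the graphs on $\SST((2,1)\otimes(1))$, $\SST((1)\otimes(2,1))$, $\SST((2)\otimes(2))$, $\SST((3)\otimes(1))$, and $\SST((1)\otimes(3))$.

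For condition (iii) you take a genuinely different route. You propose to rerun the Case~A/Case~B induction of Theorem~\ref{thm:shape}, now with extra subcases tracking how the entries $k-1,k,k+1$ distribute across the tensor boundary. The paper sidesteps this entirely: since $\{\psi_i\}_{1<i<n}$ is already a dual equivalence on $\SST(\gamma\otimes\delta)$ by Theorem~\ref{thm:deg-tensor}, the equivalent axiom~6 of \cite{Ass15}(Definition~3.2) guarantees for free that any two elements of a \emph{dual} equivalence class are joined using $\psi_{n-1}$ at most once; one further application of $\psi_{n-1}$ then suffices, if needed, to carry $n$ across to the other concatenated diagram, giving the bound of two. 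Your approach should go through if the straddling subcases are carefully handled, but it rebuilds from scratch what the abstract dual equivalence machinery already supplies. The paper's argument is a few lines and also makes transparent why the bound is exactly two (one from axiom~6, one for the boundary crossing), whereas in your scheme that count would emerge only after the full case analysis.
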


\begin{proof}
  To see that $\psi_0$ is well-defined, note that while neither $S$ nor $T$ is a signed standard tableau, Definition~\ref{def:d_0} applies whenever $1$ and $2$ or $\st{2}$, whichever exists, are both in the same tableau. When this is not the case, both must be the leftmost entry of the first row, in which case neither can be primed. Therefore $\psi_0$ is well-defined. By Theorem~\ref{thm:deg-tensor}, it remains to check conditions (i)-(iii) of Definition~\ref{def:queer-deg}.

  Condition (i) is verified by visual inspection of Figs.~\ref{fig:P-product} and \ref{fig:P-product2} together with the graphs on $\SST((3)\otimes(1))$ and on $\SST((1)\otimes(3))$.

  Condition (ii) of Definition~\ref{def:queer-deg} holds since $\psi_0$ considers and affects only entries $1$ and $2$, and $\psi_i$ for $i>1$ considers and affects only the positions of entries with absolute values $i-1,i,i+1$, and $\{1,2\} \cap \{i-1,i,i+1\} = \varnothing$ whenever $i\geq 4$.

  Condition (iii) we prove by induction on $n$. For $n \leq 4$, we appeal to condition (i) or directly to the graphs in Figs.~\ref{fig:P-product} and \ref{fig:P-product2} and on $\SST((3)\otimes(1))$ and $\SST((1)\otimes(3))$. Assuming $n\geq 5$, we must show that we need at most two instances of $\psi_{n-1}$ to connect any two elements of $\SST(\gamma\otimes\delta)$. By the analogous condition for dual equivalence, we use $\psi_{n-1}$ at most once within a dual equivalence class, and we apply it one additional time, if needed, to move $n$ between the two concatenated diagrams.
\end{proof}

In particular, by Theorem~\ref{thm:queer-deg}, Corollary~\ref{cor:PPF}, and Theorem~\ref{thm:qdeg-tensor}, we have an elementary proof of the Schur $P$-positivity of products of Schur $P$-functions.


%
\section{Odd involutions}
%
\label{sec:crystals}

The main impediment to further applications of queer dual equivalence is condition (iii) of Definition~\ref{def:queer-deg}. For dual equivalence, \cite{Ass15}(Theorem~4.2) proves the analogous condition for dual equivalence graphs is equivalent to the \emph{local} condition (i) of Definition~\ref{def:deg}. Since the queer involution commutes with all but the first two dual equivalence involutions, no local characterization involving just the dual equivalence and queer dual equivalence involutions can exist.

In order to overcome this issue, given a queer dual equivalence $\{\psi_0\} \cup \{\psi_i\}_{1<i<n}$, one can attempt to define \emph{odd dual equivalence involutions} $\{\psi^{\prime}_i\}_{1<i<n}$ as specific compositions of the dual equivalence involutions with the queer involution such that the following \emph{local} conditions imply \emph{global} Schur $P$-positivity:
  \renewcommand{\theenumi}{\roman{enumi}}
  \begin{enumerate}
  \item For all $0 \leq i-h \leq 3$ and all $S \in \mathcal{A}$, there exists a strict partition $\gamma$ of $i-h+3$ such that
    \[ \sum_{U \in [S]_{(h,i)}} F_{\Des_{(h-1,i+1)}(U)}(X) = P_{\gamma}(X), \]
    where $[S]_{(h,i)}$ is the equivalence class generated by $\psi_h,\psi^{\prime}_h,\ldots,\psi_i,\psi^{\prime}_i$.

  \item For all $|i-j| \geq 3$ and all $S \in\mathcal{A}$, we have
    \begin{eqnarray} 
      \psi^{\prime}_{i} \psi^{\prime}_{j}(S) & = & \psi^{\prime}_{j} \psi^{\prime}_{i}(S), \\
      \psi_{i} \psi^{\prime}_{j}(S) & = & \psi^{\prime}_{j} \psi_{i}(S) .
    \end{eqnarray}
    
  \end{enumerate}

To begin, we may set $\psi^{\prime}_2 = \psi_0$, and 
\begin{equation}
  \psi^{\prime}_3(S) = \left\{ \begin{array}{rl}
    \psi_0 \psi_2 (S) & \mathrm{if} \psi_2 \psi_0(S) = \psi_0(S), \\
    \psi_2 \psi_0 (S) & \mathrm{if} \psi_0 \psi_2(S) = \psi_0(S), \\
    S & \mathrm{if} \psi_2(S) = \psi_0(S)      .
  \end{array}\right.
\end{equation}
Beyond this, however, the definition of $\psi^{\prime}_4$ is not uniquely forced, nor does there appear to be so simple a definition that works for $\psi^{\prime}_5$. Nevertheless, one can (and the author has) manually construct a working solution up to $n=7$, which suggests that a general rule might well be found.

Further evidence for this approach comes from realizing Schur $P$-polynomials as characters of tensor representations of the queer Lie superalgebra. Lie superalgebras are algebras with a $\mathbb{Z}/2\mathbb{Z}$ grading, allowing for two families of variables (one commuting and one not) to interact. Originally arising from mathematical physics in connection with supersymmetry, Lie superalgebras were formalized mathematically and classified by Kac \cite{Kac77}. One well-studied superalgebra generalization of the general linear Lie algebra is the queer superalgebra. Quantized universal enveloping algebras were developed for the queer superalgebra by Sergeev \cite{Ser84}, with the corresponding crystal theory developed by Grantcharov, Jung, Kang, Kashiwara, and Kim \cite{GJKK10,GJKKK10}. The latter also gave an explicit construction of the queer crystal on semistandard decomposition tableaux \cite{Ser10}, an alternative combinatorial model for Schur $P$-polynomials developed by Serrano. Assaf and Oguz \cite{AO20} give a direct construction of the queer crystal on semistandard shifted tableaux.

The queer crystal is described in terms of the usual type A raising operators $\{e_i\}_{1 \leq i < n}$ along with a queer crystal operator $e_0$. By considering certain crystal automorphisms $S_i$ which respect the braid relations, one defines the \emph{odd crystal operators} $\{e_{i'}\}_{1 \leq i < n}$ by $e_{1'}=e_0$ and
\begin{equation} 
  e_{i'} = S_{w_i^{-1}} e_{0} S_{w_i} 
\end{equation}
for $i>1$, where $w_i = s_2 \cdots s_i s_1 \cdots s_{i-1}$ is the shortest (in coxeter length) permutation such that $w_i \cdot \alpha_i = \alpha_1$ (here $\alpha_i$ is the vector with $1$ in position $i$ and $-1$ in position $i+1$). These odd crystal operators appear to satisfy similar local axioms to the type A crystal operators as characterized by Stembridge \cite{Ste03}. Assaf and Oguz \cite{AO20} gave a partial characterization for these operators to determine a queer crystal. Gillespie, Hawkes, Poh, and Schilling \cite{GHPS20} added additional conditions that are sufficient but lack the local nature of Stembridge's axioms.

Assaf proved a direct relationship between type A crystals and dual equivalence graphs \cite{Ass08} that shows the equivalence between Stembridge's local characterization and the local axioms for dual equivalence. Thus one can expect that local axioms for the queer crystal might inform local conditions for dual equivalence, and conversely. At present, both problems remains open.

%
%

\bibliographystyle{amsalpha} 
\bibliography{queerDEG}

\end{document}